\newcommand\reallywidehat[1]{%
\savestack{\tmpbox}{\stretchto{%
  \scaleto{%
    \scalerel*[\widthof{\ensuremath{#1}}]{\kern-.6pt\bigwedge\kern-.6pt}%
    {\rule[-\textheight/2]{1ex}{\textheight}}
  }{\textheight}%
}{0.5ex}}%
\stackon[1pt]{#1}{\tmpbox}%
}
\renewcommand{\phi}{\varphi}
\renewcommand{\Re}{\textup{Re }}
\renewcommand{\Im}{\textup{Im }}
\newcommand{\meanv}[1]{\langle#1\rangle}
\newcommand{\mb}[1]{\mathbb{#1}}
\def\be{\begin{equation}}
\def\ee{\end{equation}}
\def\bea{\begin{eqnarray}}
\def\eea{\end{eqnarray}}
\def\ni{\noindent}
\def\nn{\nonumber}
\def\T{\mathbb{T}}
\def\C{\mathbb{C}}
\def\Z{\mathbb{Z}}
\def\N{\mathbb{N}}
\def\B{\mathscr{B}}
\DeclareMathSymbol{\leqslant}{\mathalpha}{AMSa}{"36} 
\DeclareMathSymbol{\geqslant}{\mathalpha}{AMSa}{"3E} 
\DeclareMathSymbol{\eset}{\mathalpha}{AMSb}{"3F}     
\renewcommand{\leq}{\;\leqslant\;}                   
\renewcommand{\geq}{\;\geqslant\;}                   
\DeclareMathOperator{\Span}{span}
\def\ie{\textit{i.e. }}
\def\a{\alpha}
\def\e{\varepsilon}
\def\d{\delta}
\def\g{\gamma}
\def\D{\Delta}
\def\l{\lambda}
\def\r{\rho}
\def\s{\sigma}
\def\t{\tau}
\def\k{\kappa}
\def\R{\mathbb{R}}
\def\C{\mathbb{C}}
\theoremstyle{plain}
\newtheorem{theorem}{Theorem}[section]
\newtheorem{lemma}[theorem]{Lemma}
\newtheorem{proposition}[theorem]{Proposition}
\theoremstyle{definition}
\theoremstyle{remark}
\newtheorem{remark}[theorem]{Remark}
\numberwithin{equation}{section}
\definecolor{light}{gray}{.9}
\author{Giuseppe Genovese}
\address{Institute of Mathematics, University of Zurich, Winterthurerstrasse 190, CH-8057 Zurich, Switzerland}
\email{giuseppe.genovese@math.uzh.ch}
\author{Renato Luc\`a}
\address{BCAM - Basque Center for Applied Mathematics, 48009 Bilbao, Spain and Ikerbasque, Basque Foundation
for Science, 48011 Bilbao, Spain.}
\email{rluca@bcamath.org} 
\author{Riccardo Montalto}
\address{Dipartimento di Matematica Federigo Enriques, Universit\'a Statale di Milano, Via Saldini 50, 20133, Milan, Italy }
\email{riccardo.montalto@unimi.it}
\title[Quasi-invariance under the Birkhoff map]
{Transformation of the Gibbs measure of the cubic NLS and fractional NLS under an approximated Birkhoff map\footnote{dedicated to the memory of Thomas Kappeler}}
\begin{document}
\begin{abstract}
We study the Gibbs measure associated to the periodic cubic nonlinear Schr\"odinger equation.  We establish a change of variable formula for this measure under the first step of the Birkhoff normal form reduction. We also consider the case of fractional dispersion.
\end{abstract}

\maketitle

Key words: Cubic Sch\"odinger equations, Gibbs measure, Quasi-invariance, Birkhoff Normal Form

\noindent
MSC 2020: 35Q55


\section{Introduction}

We study the cubic non-linear Schr\"odinger equation with fractional dispersion $\alpha >0$
\begin{equation}\label{main equation}
\partial_t u =i\Big(   |D_x|^{2\alpha} u + \sigma |u|^2 u \Big), \quad x \in \T := \R/ (2 \pi \Z)\,,
\end{equation}
where $\sigma= \pm 1$, depending on the defocusing or focusing character of the equation: for $\sigma = 1$ the equation \eqref{main equation} is defocusing, whereas for $\sigma = - 1$ it is focusing. The operator $|D_x|^\a$ is the Fourier multiplier defined by $|D_x|^\a (e^{i n x}) = |n|^\a e^{i n x}$, $n \in \Z$. One is typically interested in the regime $\frac12 < \alpha \leq 1$, being $\alpha =1$ the
usual cubic NLS equation and $\alpha = \frac12$ the cubic half wave equation. The main results of this paper are for $\a\in[\bar{\alpha},1]$ where $\bar{\alpha} := \frac{1+ \sqrt{97}}{12} \sim 0.9$. 
 
As $\a$ varies the equation (\ref{main equation}) describes a Hamiltonian (for $\a=1$ in fact integrable) PDE with energy 
\begin{equation}\label{Hamiltoniana}
H^{(\a)}(u) :=  \int_\T ||D_x|^\alpha u|^2 + \frac{\sigma}2\int_\T |u|^4\,dx \,.
\end{equation}
To these energies one associates infinite dimensional Gibbs measures $\r_\a$ absolutely continuous w.r.t. the Gaussian measures $\tilde\g_{\alpha}(du)$ restricted to some $L^2$-ball:
\be\label{eq:Gibbs}
\r_\a(du):=e^{-\frac{\sigma}2\|u\|_{L^4}^4}\tilde\g_{\alpha}(du)\,; 
\ee
see \eqref{eq:def-gammatilde} below and the surrounding discussion for the definition of $\tilde\g_{\alpha}(du)$. 
These are central objects in this note. The construction of $\r_1$ was achieved in the seminal papers of Lebowitz-Rose-Speer \cite{LRS} and Bourgain \cite{B94}. For fractional $\alpha$ one can follow the same procedure, the only delicate point being the integrability of the term 
$
e^{-\frac{\sigma}{2}\|u\|^4_{L^4}}
$
w.r.t. $\tilde\g_\a$ in the focusing case. This is achieved in the subsequent Proposition~\ref{prop:exp-mom-L4}. 

The aim of this paper is to study how the measures $\r_\a$ transform under the action of a given canonical transformation which removes the non-resonant part of the Hamiltonian \eqref{Hamiltoniana} up to terms of order $|u|^6$. We call the reduced Hamiltionian {\em Birkhoff normal form} and the reducing transformation {\em approximate Birkhoff map}. For general Hamiltonian systems a classical theorem of Birkhoff establishes the existence of a canonical transformations putting the Hamiltonian in normal form up to a remainder of a given arbitrary order (see for instance \cite[Theorem G.1]{thomasbook}). The construction of such maps has been exploited in infinite dimension for many Hamiltonian PDEs in different contexts, Starting from the pioneering papers \cite{Bou96}, \cite{Bam03}, \cite{BG06}. Without trying to be exaustive, we also mention several more recent extensions to PDEs in higher space dimension and to quasi-linear PDEs: see \cite{Delort-2009}, \cite{Delort-2015}, \cite{Delort-Imrekaz}, \cite{BDGS}, \cite{BD}, \cite{BeFePus}, \cite{BMM}, \cite{Ionescu-Pusateri}, \cite{FeMo}, \cite{StaffWilson20}, \cite{BFM23}. 
 
One nice feature of the approximate Birkhoff map is that it can be expressed as a Hamiltonian flow. Let us introduce it. 
Consider
$$\Phi_t^N : E_N \to E_N\,, \quad E_N := {\rm span}\big\{ e^{i j x} : |j| \leq N \big\}$$ 
defined by the system of ODEs
\be
\frac{d}{dt}(\Phi_t^N(u))(n)=\sum_{\substack{
|j_1|, |j_2|, |j_3| \leq N \\
j_1 + j_2 - j_3 = n\\
|j_1|^{2\alpha} + |j_2|^{2\alpha} - |j_3|^{2\alpha} - |n|^{2\alpha} \neq 0
}} \dfrac{- \sigma}{ \Big( |j_1|^{2\alpha} + |j_2|^{2\alpha} - |j_3|^{2\alpha} - |n|^{2\alpha} \Big)}  u(j_1)  u(j_2) \overline u(j_3)\,.
\ee
We are interested in the Birkhoff map/flow $\Phi_t:=\Phi_t^{\infty}$ and, more specifically, in the 
the $1$-time shift $\Phi_1$, that we abbreviate to 
$\Phi$ in order to simplify the notations. Clearly the Birkhoff map also depends on $\a$, but for notational simplicity we will not keep track of that in the manuscript. This transformation acts on the Hamiltonian as follows:
\be\label{eq:birkohoff=energia0}
H^{(\a)} [\Phi u] = \||D_x|^{\alpha} u\|^2_{L^2}+ \frac{\sigma}2\|u\|^4_{L^2}+R^{(\a)}[u]
\ee
with $R^{(\a)}[u] = O(|u|^6)$ is a remainder which has a zero of order six at $u = 0$. This identity can be 
easily justified for sufficiently regular functions, for instance for $u \in H^{s}$, $s > 1/2$, see e.g. \cite{BG06}.

Next, we shortly introduce $\tilde\g_{\alpha}$. Let $\{g_n\}_{n\in\Z}$ be a sequence of independent, identically distributed complex centred Gaussian random variables with unitary variance. We consider the random Fourier series
\begin{equation}\label{Def:gammaK}
\sum_{n\in\Z} \frac{g_n}{(1+|n|^{2\alpha})^{\frac{1}{2}}}\, e^{inx} \,.
\end{equation}
If $\alpha >1/2$, this defines a function on $L^2(\T)$ for almost all realisation of the sequence $\{g_n\}_{n\in\Z}$. Thanks to separability and the isomorphism between~$\C^{2N+1}$ and 
\be\label{def:EN}
E_N:=\Span_{\mb C}\{e^{inx}\,:\, |n|\leq N\}\,
\ee
the space $L^2(\T)$ inherits the measurable-space structure by a standard limit procedure and we will denote by
$\mathscr{B}(L^2(\T))$ the Borel $\s$-algebra on~$L^{2}(\T)$. The Gaussian measure on $\mathscr{B}(L^2(\T))$ induced by (\ref{Def:gammaK}) is denoted by $\g_{\alpha}$. 
The triple $(L^2(\T), \B(L^2(\T)), \g_{\alpha})$ is a Gaussian probability space satisfying the   
concentration properties:
$$\tilde\g_{\alpha} \bigg( \bigcap_{s < \alpha - \frac{1}{2}}  H^{s}(\T) \bigg)=1 \, ,\quad \tilde\g_{\alpha} \bigg(H^{\alpha - \frac12}(\T) \bigg)=0\,. $$  
The expectation value w.r.t. $\g_\a$ is always indicated by $E_\a$.
Finally we introduce the restriction of $\g_\a$ to a ball of $L^2(\T)$ as
\be\label{eq:def-gammatilde}
\tilde\g_{\alpha}(A):=\g_{\alpha}(A\cap\{ \|u\|_ {L^2}\leq R\}), \qquad A\in\mathscr B(L^2(\T))\,,
\ee
for some $R>0$.
The measure $\tilde\g_\a$ proves useful since the $L^2$ norm is preserved by the equation~(\ref{main equation}). 

Now we are ready to give our main theorem (recall \eqref{eq:Gibbs}).

\begin{theorem}\label{TH:main-alpha}
Let $ \bar{\alpha} := \frac{1+ \sqrt{97}}{12} \sim 0.9$, $\a\in(\bar{\alpha},1]$, $s>2-2\a$ and $A\subset H^s$ mesurable. Then the quantity
\be\label{eq:main}
\int_A \exp \left(- \int_0^1 \!\!\!  \frac{d}{d\tau}\left( H^{(\a)}[ \Phi_\tau(u)]  \right) d\tau \right)
\r_{\a}(du)
\ee
is well defined for all measurable sets $A\in \mathscr B(L^2(\T))\cap H^{s}(\T)$ and equals $\r_\a(\Phi(A))$.
\end{theorem}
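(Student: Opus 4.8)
The identity \eqref{eq:main} is, in substance, the change-of-variables formula for the Gibbs measure $\r_\a$ under the symplectomorphism $\Phi$, and the plan is to prove it by truncating the dynamics to finitely many Fourier modes and then passing to the limit. As a preliminary, one reads $\tfrac{d}{d\tau}\big(H^{(\a)}[\Phi_\tau(u)]\big)$ as the Poisson bracket $\{H^{(\a)},\chi\}[\Phi_\tau(u)]$, $\chi$ being the Hamiltonian generating the flow $\Phi_\tau$; this is a genuine functional even though $H^{(\a)}[\Phi_\tau(u)]$ itself is $\tilde\g_\a$-a.s.\ infinite. Formally, since $\Phi_0=\Id$ the $\tau$-integral equals $H^{(\a)}[\Phi(u)]-H^{(\a)}[u]$, and by \eqref{eq:birkohoff=energia0} the divergent term $\||D_x|^\a u\|_{L^2}^2$ cancels in this difference, leaving the honest functional $\tfrac{\s}{2}\|u\|_{L^2}^4-\tfrac{\s}{2}\|u\|_{L^4}^4+R^{(\a)}[u]$, which is finite for $\tilde\g_\a$-a.e.\ $u$: such $u$ lie in $\bigcap_{s'<\a-1/2}H^{s'}(\T)\hookrightarrow L^4(\T)$ (using $\a>3/4$), and $R^{(\a)}[u]$ is finite by the smoothing properties of $\Phi$. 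The hypothesis $\a>\bar\a$ moreover makes the range $s\in(2-2\a,\a-\tfrac12)$ — on which $H^s(\T)$ carries full $\tilde\g_\a$-measure — nonempty, so that the statement has content; and for $s>2-2\a$ the map $\Phi$ is a homeomorphism of $H^s(\T)$, so $\Phi(A)$ is Borel. Finally, with the normalisations in force $\r_\a$ has formal density proportional to $\mathbf 1_{\{\|v\|_{L^2}\leq R\}}\exp\!\big(-\|v\|_{L^2}^2-H^{(\a)}(v)\big)$ against the (nonexistent) Lebesgue measure on $L^2(\T)$; thus \eqref{eq:main} asserts exactly that, under $v=\Phi(u)$, the change of the density $e^{-H^{(\a)}}$ is accounted for by the factor $e^{-(H^{(\a)}[\Phi u]-H^{(\a)}[u])}$, while the Lebesgue measure is unchanged because $\Phi$ is the time-one flow of a Hamiltonian vector field and the Gaussian weight is unchanged because $\|\Phi(u)\|_{L^2}=\|u\|_{L^2}$ ($\chi$ being gauge-invariant).

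For the truncated flow $\Phi_1^N$ on $E_N\cong\C^{2N+1}$, extended by the identity on the orthogonal complement of $E_N$, all of this is classical: $\Phi_1^N$ is a diffeomorphism with $|\det D\Phi_1^N|\equiv1$ by symplecticity, $\mathbf 1_{\{\|\Phi_1^N u\|_{L^2}\leq R\}}=\mathbf 1_{\{\|u\|_{L^2}\leq R\}}$, and the ordinary change-of-variables formula — applied on $E_N$ and trivially on the untouched modes — together with the one-dimensional fundamental theorem of calculus yields, for every measurable $A$,
\be
\r_\a\big(\Phi_1^N(A)\big)=\int_A\exp\!\Big(-\int_0^1\tfrac{d}{d\tau}\big(H^{(\a)}[\Phi_\tau^N(u)]\big)\,d\tau\Big)\,\r_\a(du)\,,
\ee
where $H^{(\a)}[\Phi_1^N u]-H^{(\a)}[u]$ is a finite number for every $u\in H^s$ because $\Phi_1^N u-u$ involves only finitely many Fourier modes.

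It then remains to pass to the limit $N\to\infty$ in the previous display. The ingredients are: (i) the convergence $\Phi_1^N(u)\to\Phi(u)$ in $H^s$ for $\tilde\g_\a$-a.e.\ $u$; (ii) the convergence $\int_0^1\tfrac{d}{d\tau}\big(H^{(\a)}[\Phi_\tau^N(u)]\big)\,d\tau\to\int_0^1\tfrac{d}{d\tau}\big(H^{(\a)}[\Phi_\tau(u)]\big)\,d\tau$ for $\tilde\g_\a$-a.e.\ $u$; (iii) a uniform integrability bound allowing the limit to be passed through the integral by dominated convergence. Granting these, one takes $N\to\infty$ in the displayed identity — passing the limit inside the integral on the right, and identifying $\lim_N\r_\a(\Phi_1^N(A))=\r_\a(\Phi(A))$ on the left (first for $A$ in a generating $\pi$-system, then for all Borel $A$ since both sides of \eqref{eq:main} are finite measures in $A$) — to obtain \eqref{eq:main}. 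In the focusing case $\s=-1$, (iii) is where the exponential integrability of $\|u\|_{L^4}^4$ from Proposition~\ref{prop:exp-mom-L4} enters, combined with Gaussian hypercontractivity applied to the multilinear-in-$u$ functionals making up the exponent; the uniform bound $\r_\a(\Phi_1^N(A))\leq\r_\a\big(L^2(\T)\big)<\infty$ in particular makes \eqref{eq:main} finite, establishing the asserted well-definedness.

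The main obstacle is (i)--(ii): controlling $\Phi_1^N(u)-u$ and the difference
\[
\||D_x|^\a\Phi_1^N(u)\|_{L^2}^2-\||D_x|^\a u\|_{L^2}^2=2\,\Re\big\langle|D_x|^{2\a}u,\,\Phi_1^N(u)-u\big\rangle+\big\||D_x|^\a\big(\Phi_1^N(u)-u\big)\big\|_{L^2}^2
\]
on the low-regularity support of $\tilde\g_\a$, where $\||D_x|^\a u\|_{L^2}^2$ is itself a.s.\ infinite and only such differences are meaningful. This rests on multilinear smoothing estimates for the Birkhoff map, which come from a lower bound of the form $\big||j_1|^{2\a}+|j_2|^{2\a}-|j_3|^{2\a}-|n|^{2\a}\big|\gtrsim(\max_i|j_i|)^{-c}$ on the non-resonant set $\{j_1+j_2-j_3=n\}$: it is the competition between the gain of derivatives so obtained and the $\sim|n|^{-\a}$ decay of the Fourier modes sampled by $\tilde\g_\a$ that fixes the threshold $\bar\a=\frac{1+\sqrt{97}}{12}$. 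Feeding these deterministic bounds into probabilistic estimates — finite moments under $\tilde\g_\a$ via Wiener chaos — then delivers the almost-sure convergences and the domination needed above, as well as the finiteness of $R^{(\a)}[u]$ invoked earlier.
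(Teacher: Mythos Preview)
Your overall strategy — finite-dimensional change of variables, then pass to the limit — matches the paper's. But step (iii), the uniform integrability of the densities $f_N$, is where the real work lies, and your proposed mechanism for it does not succeed.

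You suggest that hypercontractivity, together with Proposition~\ref{prop:exp-mom-L4}, furnishes the uniform $L^p$ bound on $f_N$. It does not. The exponent $\int_0^1\frac{d}{d\tau}H^{(\a)}[\Phi_\tau^N(u)]\,d\tau$ is, at $\tau=0$, a degree-six Wiener chaos $G_N(u)$, so hypercontractivity alone yields $\|G_N\|_{L^p(\g_\a)}\lesssim p^{3}$, i.e.\ tails $\g_\a(|G_N|\ge t)\lesssim e^{-ct^{1/3}}$. An exponent $1/3<1$ is \emph{not} enough to integrate $e^{p|G_N|}$; you need tails of the form $e^{-ct^{\zeta}}$ with $\zeta>1$. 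The paper obtains this (Proposition~\ref{prop:Lpbound}, estimate~\eqref{eq:finaltail}) by exploiting the restriction to the $L^2$-ball $\{\|u\|_{L^2}\le R\}$: on this set one can bound $G_N$ by $\|u\|^2_{H^{2-2\a}}\|u\|^3_{FL^{0,1}}$ up to powers of $R$ and then prove much sharper tail bounds (Lemmas~\ref{lemma:subexpHs}, \ref{lemma:subexpFL}) for these norms under $\tilde\g_\a$. The condition $\zeta(\a)>1$ is \emph{precisely} the origin of the threshold $\bar\a=\frac{1+\sqrt{97}}{12}$ --- not, as you write, a competition between the small-divisor lower bound and the Gaussian decay. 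The small-divisor bound~\eqref{lower bound divisori frazionari} only needs $\a>3/4$.

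There is a second, subtler gap. Even granting the tail bound on $G_N(u)$ at $\tau=0$, the exponent involves $G_N\big(\Phi_\tau^N(u)\big)$ for all $\tau\in[0,1]$, and $\Phi_\tau^N(u)$ is not polynomial in $u$, so hypercontractivity says nothing about it directly. The paper closes this loop by first proving a quantitative quasi-invariance estimate for the truncated flow (Proposition~\ref{prop_res1}), and then using that estimate to pull the law of $\Phi_{\tau^*}^N(u)$ back to the law of $u$ at the cost of a controlled factor (Lemma~\ref{prop:UI}). In other words, quasi-invariance is not merely the conclusion but also a tool in bounding $\sup_N\|f_N\|_{L^p}$. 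Your sketch skips this entirely.

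Two smaller points. First, for $\a<1$ the paper does \emph{not} establish that $\Phi$ is a homeomorphism of $H^s$; only local well-posedness (Lemma~\ref{LWPlemma}) and almost-sure global existence via Bourgain's argument (Proposition~\ref{prop:alabourgain}) are available, so your assertion that $\Phi(A)$ is Borel for this reason is unjustified. Second, your passage $\lim_N\r_\a(\Phi_1^N(A))=\r_\a(\Phi(A))$ for all Borel $A$ is delicate: pointwise convergence of $\Phi_1^N$ to $\Phi$ does not give convergence of these set measures. The paper sidesteps this by first proving quasi-invariance abstractly (Proposition~\ref{prop:quasi-inv}), obtaining a density $\bar f$, and then identifying $\bar f=f_\infty$ via the $L^p$ convergence $f_N\to f_\infty$ (Proposition~\ref{prop:density}) and an abstract argument from~\cite{BBM1}.
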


The above Theorem can be probably extended to include the case $s=2-2\a$. For simplicity we do not attempt that here, but remark that from our proof it follows that $s\geq0$ if $\a=1$.
 
Equation (\ref{main equation}) is completely integrable for $\a=1$. In the defocusing case, exploiting the complete integrability it is possible to construct a map, so-called {\em global Birkhoff map}, which trivialises equation (\ref{main equation}) and transforms the energies into non-linear Sobolev norms \cite{thomas-NLS}. Our approximate Birkhoff map can be though of as a local approximation of the global Birkhoff map, which is accurate about the origin. Constructing the global Birkhoff map for a Hamiltonian PDE is a very challenging task, which has been achieved only in few cases, namely by Kappeler and P\"oschel for the KdV equation \cite{thomasbook}, by Greb\'ert and Kappeler for the defocussing NLS equation \cite{thomas-NLS}, by G\'erard, Kappeler and Topalov for the Benjamin-Ono equation \cite{thomasBOno1, thomasBOno2}. In the recent preprint \cite{nik2} Tzvetkov exhibits a large class of invariant measures w.r.t. the Benjamin-Ono flow (including Gaussian-based measures), using the global Birkhoff map of G\'erard, Kappeler and Topalov. However these measures are constructed on the image space of the map (that is, they are given in terms of Birkhoff coordinates) and the author conjectures that the Birkhoff map acts on Gibbs measure in a quasi-invariant way (see \cite[Conjecture 3.1]{nik2}). In the same spirit we conjecture that a change of variable formula similar to the one of Theorem \ref{TH:main-alpha} can be proved for map reducing the Hamiltonian in normal forms of any order. We plan to further investigate this topic in future works. 

The proof is based on the Tzvetkov argument for quasi-invariance \cite{sigma} and the method of \cite{BBM1} for the transported density (for other ways of determining the density, see \cite{deb,forl-tol}). In particular we exploit crucially that the approximated Birkhoff map is given by a Hamiltonian flow, a property which is not enjoyed by the global Birkhoff map.

The restriction on $\a$ in Theorem \ref{TH:main-alpha} arises   
from the probabilistic analysis, but it is necessary in order to justify the dynamics as well (see Proposition \ref{prop:alabourgain}).
 We do not assign any special meaning to this range $(\bar{\alpha},1]$, 
$\bar{\alpha} := \frac{1+ \sqrt{97}}{12} \sim 0.9$. We stress that if $\a<1$ showing that the flow of the approximate Birkhoff map is globally well defined is quite a non-trivial task. By a direct application of the Bourgain probabilistic globalisation method \cite{B94} we prove here global well-posedness of the flow-map for almost all data in $H^{(\alpha - 1/2)^-}$ w.r.t. the Gibbs measure; however it is not clear to us how to do that deterministically, since we cannot prove local well-posedness for data in $L^2$ and the $L^2$ norm is the sole conserved quantity at our disposal 
when $\a<1$.

The paper is organised as follows. The Birkhoff normal form reduction is given in Section \ref{section:Normal} along with a crucial estimate on the derivative along the Birkhoff flow of the Hamiltonian at time zero. In Section \ref{section:flowNLS} we give the necessary deterministic estimates on the Birkhoff flow for $\a=1$, while the ones for fractional $\a$ are given in Section \ref{section:flow-fract}. In Section \ref{section:prob} we give the necessary probabilistic estimates. Section \ref{sect:quasi} is devoted to the proof of the quasi-invariance and here we also prove the probabilistic global well-posedness of the Birkhoff flow for $\a > \bar{\alpha}$, $ \bar{\alpha} := \frac{1+ \sqrt{97}}{12} \sim 0.9$. We establish the formula for the transported density in Section \ref{sect:density}.

\subsection*{Notations}

Given a function $u : \T \to \R$, we denote by $f(n)$ its Fourier coefficient 
$$
u(n) := \frac{1}{2 \pi} \int_\T u(x) e^{- i n x}\, d x, \quad n \in \Z\,. 
$$
 We define the Sobolev norms $H^{s}$ of $f$ as 
$$
\| u \|_{H^s}^2 := 
\sum_{n \in \Z}(1+|n|^{2 s})|{u}(n)|^2\,,
$$
We also define the Fourier-Lebesgue norm for any $p \geq 1$
$$
\|u\|_{FL^{0,p}}= \Big( \sum_{n \in \Z}|u(n)|^p \Big)^{\frac1p}\,. 
$$
A ball of radius $R$ and centred in zero in the $H^s$ topology is denoted by $B_s(R)$. We drop the subscript for $s=0$ (ball of $L^2$). We write $\meanv{\cdot} := (1 + |\cdot|^2)^{1/2}$.
We write $X\lesssim Y$ if there is a constant $c>0$ such that $X\leq cY$ and $X\simeq Y$ if $Y\lesssim X\lesssim Y$. We underscore the dependency of $c$ on the additional parameter $a$ writing $X\lesssim_a Y$. $C,c$ always denote constants that often vary from line to line within a calculation. $A+B$ is the Minkowski sum of the sets $A, B$, namely $A + B := \{ x + y : x \in A\,,\, y \in B \}$. 
We denote by $\Pi_N$ the standard orthogonal projection
$$
\Pi_N(u):=\sum_{|n|\leq N} {u}(n)e^{inx}\,,\qquad \Pi_N^\bot := {\rm Id} - \Pi_N\,,
$$
where ${u}(n)$ is the $n$-th Fourier coefficient of $u\in L^2$. 
Also, we denote the Littlewood-Paley projector by $\D_0:=\Pi_{1}$, 
$\D_j:=\Pi_{2^{j}}-\Pi_{2^{j-1}}$, $j \in \N$.

We will use the following well-known tail bounds for sequences of independent centred Gaussian random variables $X_1,\ldots,X_d$ (see for instance \cite{ver}): the Hoeffding inequality
\be\label{eq_Hoeffing}
P\left(\sum_{i=1}^d |X_i| \geq \lambda \right)\leq C\exp\left(-c\frac{\lambda^2}{d}\right)\,
\ee
and the Bernstein inequality
\be\label{eq_Bernstein}
P\left( \left|\sum_{i=1}^d |X_i|^2-E[\sum_{i=1}^d |X_i|^2]\right| \geq \lambda \right)\leq C\exp\left(-c\min\left(\l,\frac{\lambda^2}{d}\,\right)\right)\,.
\ee

\subsection*{Acknowledgements} This work is dedicated to the memory of our colleague and dearest friend Thomas Kappeler. Discussing with him through the past years greatly inspired us to start the study of the invariance and quasi-invariance properties of Birkhoff maps in the context of dispersive PDEs, of which this paper is the first step.

We thank Nikolay Tzvetkov for a stimulating discussion.

Renato Lucà is supported by the Basque Government under the (BCAM) BERC program 2022-2025 and by the Spanish Ministry of Science, Innovation and Universities through the (BCAM) Severo Ochoa accreditation CEX2021-001142-S and the project PID2021-123034NB-I00. Renato Lucà is also supported by the Ramon y Cajal fellowship RYC2021-031981-I.

Riccardo Montalto is supported by the ERC STARTING GRANT "Hamiltonian Dynamics, Normal Forms and Water Waves" (HamDyWWa), project number: 101039762. Views and opinions expressed are however those of the authors only and do not necessarily reflect those of the European Union or the European Research Council. Neither the European Union nor the granting authority can be held responsible for them. 

Riccardo Montalto is also supported by INDAM-GNFM.


\section{The Birkhoff Normal form transformation}\label{section:Normal}

In this section we introduce the finite dimensional approximated Birkhoff transformation studied in this paper.
To do so, we present some more notations.

Given the Hamiltonian function $\mathcal{F}$ the associated Hamilton equations write as
$$
\partial_t U = X_{\mathcal{F}}(U) 
$$
where $U := (u, \overline u) \in {\mathcal L}^2(\T) := L^2(\T) \times L^2 (\T)$ and the Hamiltonian vector field $X_{\mathcal{F}}$ is given by 
\begin{equation}\label{campo hamiltoniano NLS}
\begin{aligned}
X_{\mathcal{F}}(U)  := \begin{pmatrix}
i \nabla_{\overline u} {\mathcal{F}}(U) \\
- i \nabla_u {\mathcal F}(U)
\end{pmatrix} = {J} \nabla_U {\mathcal F}(U)\,, \\
\nabla_U {\mathcal F}(U) = (\nabla_u {\mathcal F}(U), \nabla_{\overline u} {\mathcal F}(U)), \quad {J} := \begin{pmatrix}
0 & i \\
- i & 0
\end{pmatrix}\,. 
\end{aligned}
\end{equation}
We also define, for any $s \geq 0$, $\mathcal{H}^s(\T) := H^s(\T) \times H^s(\T)$. Given two Hamiltonian functions ${\mathcal F}, { \mathcal G} : {\mathcal L}^2(\T) \to \R$, we define the Poisson bracket
\begin{equation}\label{def Poisson}
\{ { \mathcal F}, { \mathcal G} \} (U) := D_U { \mathcal F}(U)[{ J \nabla { \mathcal G}(U)}]\,. 
\end{equation}
Given ${\mathcal G} : {\mathcal L}^2(\T) \to \R$ such that its Hamiltonian vector field $X_{\mathcal F} : {\mathcal H}^s(\T) \to {\mathcal H}^s(\T)$, $s \geq 0$, we denote by $\Phi_t^{\mathcal F}$ its Hamiltonian flow, namely 
\begin{equation}\label{flusso hamiltoniano}
\begin{cases}
\partial_t \Phi^{\mathcal F}_t(U) = X_{\mathcal F}(\Phi^{\mathcal F}_t(U)) \\
\Phi^{\mathcal F}_0(U) = U\,. 
\end{cases}
\end{equation}
Given a function ${\mathcal G}$, one has that 
\begin{equation}\label{Lie expansion}
{ \mathcal G} \circ \Phi^{\mathcal F}_t = {\mathcal G} + t \{{\mathcal G}, {\mathcal F} \} + {R}_t, \quad {R}_t 
:= \int_0^t (t - \tau) \{ \{{\mathcal G}, {\mathcal F}\}, {\mathcal F} \} \circ \Phi^{\mathcal F}_\tau \, d \tau\,.
\end{equation}
Given a Hamiltonian $\mathcal{F} : {\mathcal H}^s(\T) \to \R$, we define the 
truncated Hamiltonian ${\mathcal F}_N := {\mathcal F}_{| {\mathcal E}_N}$, ${\mathcal E}_N := E_N \times E_N$ (recall \eqref{def:EN}) and the truncated Hamiltonian vector field is given by 
\begin{equation}\label{DefTrunc1}
X_{{\mathcal F}_N}(U) := \Pi_N X_{{\mathcal F}}(\Pi_N U)\,.
\end{equation}
The truncated Hamiltonian flow $\Phi_t^{{\mathcal F}_N} : {\mathcal E}_N \to {\mathcal E}_N$ then solves
\begin{equation}\label{flusso hamiltoniano troncato}
\begin{cases}
\partial_t \Phi_t^{{\mathcal F}_N}(U) = X_{{\mathcal F}_N}(\Phi_t^{{\mathcal F}_N}(U)) \\
\Phi_0^{{\mathcal F}_N}(U) = U\,, 
\end{cases} \quad U \in {\mathcal E}_N\,. 
\end{equation}

Let us now define the Birkhoff map. We set
\begin{equation}\label{formula cal FN}
\begin{aligned}
& {\mathcal F}_N \equiv {\mathcal F}^{(\a)}_N\,, \\
& {\mathcal F}^{(\a)}_N := \!\!\!
\sum_{\substack{
|n_1|,|n_2|, |m_1|,|m_2| \leq N \\
n_1 + n_2 = m_1 + m_2\\
|n_1|^{2\a} + |n_2|^{2\a} \neq |m_1|^{2\a} + |m_2|^{2\a}
}} \dfrac{-  \sigma}{ 2 i \Big( |n_1|^{2\a} + |n_2|^{2\a} - |m_1|^{2\a}- |m_2|^{2\a} \Big)}  u(n_1)  u(n_2) \overline u(m_1) \overline u(m_2)\,.
\end{aligned}
\end{equation}

The flow-map $\Phi_t^N := \Phi^{{\mathcal F}_N}_{t} : {\mathcal E}_N \to {\mathcal E}_N$ defined by the system of ODEs
\be\label{eq:flomap}
\frac{d}{dt}(\Phi_t^N(u))(n)=\sum_{\substack{
|j_1|, |j_2|, |j_3| \leq N \\
j_1 + j_2 - j_3 = n\\
|j_1|^{2\a} + |j_2|^{2\a} \neq |j_3|^{2\a} + |n|^{2\a}
}} \dfrac{- \sigma}{ \Big( |j_1|^{2\a} + |j_2|^{2\a} - |j_3|^{2\a} -| n|^{2\a} \Big)}  u(j_1)  u(j_2) \overline u(j_3)
\ee
is Hamiltonian w.r.t. ${\mathcal F}_N$ (with canonical Poisson brackets). We call this finite dimensional Birkhoff flow and we set  
$$
\Phi^N \:= (\Phi_t^N)_{t = 1}.
$$
Recall that we omit the dependence on $\a$ in the energy and in the Birkhoff flow map.

The following lemma is a classical fact. 
\begin{lemma}\label{lemma equazione omologica}
Let $\a\in(1/2,1]$. The Hamiltonian ${\mathcal F}_N$ satisfies 
\begin{equation}\label{omologica cal FN}
\{ \| |D_x|^{\alpha} \Pi_N u \|_{L^2}\,,\, {\mathcal F}_N(u) \} + \frac{\sigma}2 \| \Pi_N u \|_{L^4}^4 = \sigma \| \Pi_N u \|_{L^2}^4\,. 
\end{equation}
\end{lemma}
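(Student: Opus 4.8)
The plan is to prove \eqref{omologica cal FN} by a direct computation in Fourier coefficients. Both sides are homogeneous polynomials of degree four in the finitely many complex variables $(u(n),\overline u(n))_{|n|\le N}$, so it suffices to match the coefficient of every monomial $u(n_1)u(n_2)\overline u(m_1)\overline u(m_2)$; by translation invariance of all the objects involved, only monomials with $n_1+n_2=m_1+m_2$ and all indices of modulus $\le N$ can occur. (I read the first slot of the bracket in \eqref{omologica cal FN} as the diagonal quadratic $Z_N(u):=\||D_x|^{\a}\Pi_N u\|_{L^2}^2=\sum_{|n|\le N}|n|^{2\a}|u(n)|^2$.)

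First I would record how the canonical bracket with $Z_N$ acts on such a monomial: since $\{Z_N,\cdot\}$ multiplies $u(k)$ by $-c\,|k|^{2\a}$ and $\overline u(k)$ by $+c\,|k|^{2\a}$ for a fixed nonzero constant $c$ (pinned down by the conventions in \eqref{campo hamiltoniano NLS}--\eqref{def Poisson}), Leibniz gives
\[
\{Z_N,\,u(n_1)u(n_2)\overline u(m_1)\overline u(m_2)\}=c\,\bigl(|n_1|^{2\a}+|n_2|^{2\a}-|m_1|^{2\a}-|m_2|^{2\a}\bigr)\,u(n_1)u(n_2)\overline u(m_1)\overline u(m_2).
\]
Comparing with \eqref{formula cal FN}, the coefficient of every non-resonant monomial in $\mathcal F_N$ is precisely a constant times the reciprocal of the small divisor $|n_1|^{2\a}+|n_2|^{2\a}-|m_1|^{2\a}-|m_2|^{2\a}$ multiplied by the corresponding coefficient of $\tfrac{\sigma}{2}\|\Pi_N u\|_{L^4}^4$; this is exactly the homological equation, i.e. $\mathcal F_N$ is built (with the sign in \eqref{formula cal FN} chosen for this purpose) so that $\{Z_N,\mathcal F_N\}$ reproduces $-$ the projection of $\tfrac{\sigma}{2}\|\Pi_N u\|_{L^4}^4$ onto the non-resonant monomials. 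Pinning down this sign and the constant $c$ against the fixed conventions is the one tedious point, but it is finite-dimensional, convergence-free bookkeeping. Granting it, $\{Z_N,\mathcal F_N\}+\tfrac{\sigma}{2}\|\Pi_N u\|_{L^4}^4$ collapses to the projection of $\tfrac{\sigma}{2}\|\Pi_N u\|_{L^4}^4$ onto the \emph{resonant} monomials, namely those with $|n_1|^{2\a}+|n_2|^{2\a}=|m_1|^{2\a}+|m_2|^{2\a}$.

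It then remains to identify this resonant set and to sum the surviving monomials, and this is the one place where $\a\in(1/2,1]$ is genuinely used. The key observation is that $t\mapsto|t|^{2\a}$ is \emph{strictly} convex on $\R$ because $2\a>1$. Two integer pairs with the same sum $n_1+n_2=m_1+m_2$ but $\{n_1,n_2\}\ne\{m_1,m_2\}$ have the same midpoint and, being distinct, different spreads, so one of them — say, after relabelling, $\{m_1,m_2\}$ — lies strictly inside the interval spanned by the other, i.e. $m_1=\theta n_1+(1-\theta)n_2$, $m_2=(1-\theta)n_1+\theta n_2$ with $\theta\in(0,1)$; strict convexity then forces $|m_1|^{2\a}+|m_2|^{2\a}<|n_1|^{2\a}+|n_2|^{2\a}$, so the monomial is non-resonant. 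Hence the resonant monomials are exactly the trivial ones $\{n_1,n_2\}=\{m_1,m_2\}$, i.e. $(m_1,m_2)\in\{(n_1,n_2),(n_2,n_1)\}$, and summing $u(n_1)u(n_2)\overline u(m_1)\overline u(m_2)$ over these reduces, by a short computation, the left-hand side of \eqref{omologica cal FN} to $\sigma\|\Pi_N u\|_{L^2}^4$. The main obstacle is nothing deep: it is the careful sign/normalisation accounting in the Poisson bracket (so that the small divisors of $\mathcal F_N$ cancel with the exact constant), together with the observation that strict convexity kills every non-trivial resonance — an argument which fails precisely at $\a=\tfrac12$, where $|t|$ is convex but not strictly so and resonances abound.
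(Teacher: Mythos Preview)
Your argument is correct. The first half --- computing the bracket in Fourier coordinates so that $\{Z_N,\mathcal F_N\}+\tfrac{\sigma}{2}\|\Pi_N u\|_{L^4}^4$ collapses to the resonant part of $\tfrac{\sigma}{2}\|\Pi_N u\|_{L^4}^4$ --- is exactly what the paper does. The genuine difference is in how you identify the resonant set. The paper treats $\alpha=1$ via the algebraic factorisation $j_1^2+j_2^2-j_3^2-j_4^2=2(j_3-j_2)(j_1-j_3)$ under the constraint $j_1+j_2=j_3+j_4$, and for fractional $\alpha$ it invokes the quantitative lower bound $|\Phi(j_1,j_2,j_3)|\gtrsim |j_1-j_4|\,|j_2-j_4|\,j_{\max}^{2\alpha-2}$ from an external reference. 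Your strict-convexity argument (pairs with equal midpoint but unequal spread yield a strict inequality for $t\mapsto|t|^{2\alpha}$ when $2\alpha>1$) handles all $\alpha\in(1/2,1]$ uniformly and is entirely self-contained, which is tidier for this lemma in isolation. The paper's route, on the other hand, already records the quantitative small-divisor estimate \eqref{lower bound divisori frazionari} that drives the later analysis of the Birkhoff vector field, so citing it here costs nothing extra. Both approaches are valid; yours is more elementary, theirs is closer to what is needed downstream.
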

\begin{proof}
A direct computation gives
\begin{equation}\label{homological equation}
 \{ \||D_x|^{\alpha} \Pi_N u\|^2_{L^2}, {\mathcal F}_N \} + \frac{\sigma}2\|\Pi_Nu\|^4_{L^4} = \frac{\sigma}2 \sum_{\begin{subarray}{c}
|j_1|, \ldots, |j_4| \leq N \\
j_1 + j_2 = j_3 + j_4 \\
|j_1|^{2\a} + |j_2|^{2\a} = |j_3|^{2\a} + |j_4|^{2\a}
\end{subarray}}  u(j_1)  u(j_2) \overline u(j_3) \overline u(j_4)\,.
\end{equation}
For $\a=1$ we note that under the constraint $j_1 + j_2 = j_3 + j_4$, one obtains that 
$$
\begin{aligned}
j_1^2 + j_2^2 - j_3^2 - j_4^2 
& = 2 (j_3 - j_2)(j_1 - j_3)\,. 
\end{aligned}
$$
Therefore if $j_1^2 + j_2^2 - j_3^2 - j_4^2 = 0$, then either $j_2 = j_3$ or $j_1 = j_3$. If $j_2 = j_3$, then $j_1 + j_2 - j_3 - j_4 = 0$ gives $j_1 = j_4$. Similarly if $j_1 = j_3$, then $j_2 = j_4$. The same extends to fractional $\a\in(1/2,1]$ thanks to \cite[Lemma 2.4]{Forlano}. Indeed, this lemma implies that if $j_1 + j_2 - j_3 - j_4 = 0$, one has the lower bound
$$
\begin{aligned}
& \Big||j_1|^{2 \alpha} + |j_2|^{2 \alpha} - |j_3 |^{2 \alpha} - |j_4|^{2 \alpha} \Big| \geq C |j_1 - j_4| |j_2 - j_4| j_{\max}^{2 \alpha - 2} \\
& j_{\max} := {\rm max}\{ |j_1|, |j_2|, |j_3|, |j_4| \}
\end{aligned}
$$
for some constant $C > 0$. This latter bound implies that if $|j_1|^{2 \alpha} + |j_2|^{2 \alpha} - |j_3 |^{2 \alpha} - |j_4|^{2 \alpha} = 0$, then $\{ j_1, j_2 \} = \{ j_3, j_4 \}$. Therefore the r.h.s. of \eqref{homological equation}
takes the form 
\begin{equation}\label{forma cal ZN}
\sigma \sum_{|j|, |j'| \leq N} |u(j)|^2 |u(j')|^2= \sigma \|\Pi_Nu\|^4_{L^2}\,. 
\end{equation}
\end{proof}

\section{The flow of the Birkhoff map for the cubic NLS}\label{section:flowNLS}

Here we analyse the well-posedness of the Birkhoff flow. The case $\a=1$ is easier to deal with and will be presented first. We first provide some estimates on the Hamiltonian vector field $X_{{\mathcal F}_N}$. 

\begin{lemma}\label{prop campo hamiltoniano X FN}
Let $\a=1$ and $N \in \N \cup \{ \infty \}$.

$(i)$ For any $\sigma \geq 0$ and for any $u \in H^\sigma(\T)$ it is
\begin{equation}\label{RickBoundE1}
\| X_{{\mathcal F}_N}(u) \|_{H^\s} \lesssim \| u \|_{L^2}^2 \| u \|_{H^\sigma}\,. 
\end{equation}
$(ii)$ For any $\sigma \geq 0$ and for any $u, v \in H^\sigma (\T)$ it is
$$
\|  X_{{\mathcal F}_N}(u) - X_{{\mathcal F}_N}(v) \|_{H^\s} \lesssim 
\|  u - v \|_{H^\s} ( \| u \|_{H^\s}^2 + \| v \|_{H^\s}^2) .
$$
\end{lemma}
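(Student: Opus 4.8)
The statement to prove is Lemma~\ref{prop campo hamiltoniano X FN}: for $\a=1$ and any $N\in\N\cup\{\infty\}$, the vector field $X_{{\mathcal F}_N}$ satisfies the cubic estimate $\|X_{{\mathcal F}_N}(u)\|_{H^\s}\lesssim \|u\|_{L^2}^2\|u\|_{H^\s}$ for $\s\ge0$, together with the companion Lipschitz-type bound. The plan is to compute $X_{{\mathcal F}_N}$ explicitly in Fourier variables and then reduce everything to the boundedness of a trilinear convolution operator on weighted $\ell^2$. From \eqref{eq:flomap} (with $\a=1$) the $n$-th component of $X_{{\mathcal F}_N}(u)$ is, up to the constant $\sigma$, the sum over $j_1+j_2-j_3=n$, $|j_i|\le N$, $j_1^2+j_2^2-j_3^2-n^2\neq0$, of $u(j_1)u(j_2)\overline u(j_3)$ divided by the modulation factor $j_1^2+j_2^2-j_3^2-n^2$. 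The crucial algebraic observation — already used in the proof of Lemma~\ref{lemma equazione omologica} — is that under $j_1+j_2-j_3-n=0$ one has $j_1^2+j_2^2-j_3^2-n^2 = 2(j_3-j_2)(j_1-j_3) = -2(j_1-n)(j_2-n)$, so the denominator is, in modulus, exactly $2|j_1-n||j_2-n|$, and it is nonzero precisely when $j_1\neq n$ and $j_2\neq n$. Hence
\be\label{eq:planFourier}
|(X_{{\mathcal F}_N}(u))(n)| \;\le\; \tfrac12\sum_{\substack{j_1+j_2-j_3=n\\ j_1\neq n,\ j_2\neq n}} \frac{|u(j_1)|\,|u(j_2)|\,|u(j_3)|}{|j_1-n|\,|j_2-n|}\,.
\ee

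Now I would estimate the weighted $\ell^2$ norm of the right-hand side. Write $a(n):=\la n\ra^\s|u(n)|$, so $\|u\|_{H^\s}\simeq\|a\|_{\ell^2}$. Using $\la n\ra^\s \lesssim \la j_1\ra^\s + \la j_2\ra^\s + \la j_3\ra^\s$ (since $n=j_1+j_2-j_3$) together with triangle-type inequalities, one splits the sum into three pieces in which the Sobolev weight falls on $j_1$, $j_2$, or $j_3$ respectively. Consider the representative piece where the weight lands on $j_1$: its $\ell^2_n$ norm is bounded by the $\ell^2_n$ norm of
$$\sum_{\substack{j_1+j_2-j_3=n\\ j_1\neq n,\ j_2\neq n}} \frac{a(j_1)\,|u(j_2)|\,|u(j_3)|}{|j_1-n|\,|j_2-n|}\,.$$
Setting $k:=n-j_1=j_2-j_3\neq0$ and $\ell:=n-j_2=j_1-j_3\neq0$, after a change of summation variables this becomes a product-type convolution; one estimates it by Young's inequality $\ell^2\ast\ell^1\ast\ell^1\to\ell^2$, controlling two of the three factors in $\ell^1$ by pairing each summed-over difference with a factor of $1/|k|$ or $1/|\ell|$ and a factor $|u(\cdot)|$, and using Cauchy--Schwarz in the form $\sum_m |u(m)|/|m| \lesssim \|u\|_{\ell^2}\lesssim\|u\|_{L^2}$ where needed, since $\sum_{m\neq0}1/m^2<\infty$. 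The net effect is that each of the two non-weighted Fourier factors contributes a factor $\|u\|_{L^2}$ (the extra $1/|j_i-n|$ provides the summability), while the weighted factor contributes $\|a\|_{\ell^2}=\|u\|_{H^\s}$, giving $\|X_{{\mathcal F}_N}(u)\|_{H^\s}\lesssim\|u\|_{L^2}^2\|u\|_{H^\s}$. The analogous arrangement handles the pieces where the weight is on $j_2$ or $j_3$. The bound for $N=\infty$ is the same estimate with the cutoff removed; for finite $N$ the constant is $N$-independent because every step is a positive-term bound.

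For part $(ii)$, I would use multilinearity: $X_{{\mathcal F}_N}$ is a (complex) trilinear form, so $X_{{\mathcal F}_N}(u)-X_{{\mathcal F}_N}(v)$ telescopes into three terms, each trilinear in the arguments $u-v$, and one of $u,v$, with $u-v$ appearing once in each. Applying the same trilinear convolution estimate to each term and then bounding $\max(\|u\|_{H^\s},\|v\|_{H^\s})^2$ by $\|u\|_{H^\s}^2+\|v\|_{H^\s}^2$ (using $\s\ge0$ so that $\|\cdot\|_{L^2}\le\|\cdot\|_{H^\s}$) yields the claimed Lipschitz bound.

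\emph{Main obstacle.} The only real work is organizing the trilinear sum in \eqref{eq:planFourier} so that the two denominator factors $1/|j_1-n|$, $1/|j_2-n|$ are correctly matched with the two Fourier factors that are to be measured in $\ell^1$, while making sure the third (weighted) factor is left free in $\ell^2$; one has to check that the three cases arising from distributing the Sobolev weight $\la n\ra^\s$ among $j_1,j_2,j_3$ can each be absorbed this way, and in particular that when the weight sits on $j_3$ the resulting convolution still closes (it does, because $j_3=j_1+j_2-n$ and one still has the two difference factors to spend). Everything else is a routine application of Young/Cauchy--Schwarz and $\sum 1/m^2<\infty$, and the $N\to\infty$ uniformity is automatic since no cancellation is used.
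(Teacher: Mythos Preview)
Your proposal is correct and mirrors the paper's proof: factor the modulation as $-2(j_1-n)(j_2-n)$, distribute $\langle n\rangle^\sigma$ over $j_1,j_2,j_3$, and then exploit $\sum_{m\neq0}m^{-2}<\infty$ via Cauchy--Schwarz to close each piece, with part~(ii) following by the same trilinear telescoping. The only cosmetic difference is that the paper applies Cauchy--Schwarz in $(j_1,j_2)$ directly (splitting off the summable kernel $\sum_{j_1,j_2}\langle j_1-n\rangle^{-2}\langle j_2-n\rangle^{-2}$) rather than phrasing it as Young's $\ell^2*\ell^1*\ell^1\to\ell^2$; since the weight $1/(|j_1-n||j_2-n|)$ depends on $n$ the convolution framing is not quite literal, but the Cauchy--Schwarz step you already invoke is exactly what makes it rigorous.
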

\begin{proof}
The Hamiltonian vector field is $X_{{\mathcal F}_N} = (i \nabla_{\bar u} {\mathcal F}_N, - i \nabla_u {\mathcal F}_N)$ and one computes, by recalling formula \eqref{formula cal FN}, that 
$$
\begin{aligned}
\partial_{\bar u(n) } {\mathcal F}_N(u) & = \sum_{\substack{
|j_1|, |j_2|, |j_3| \leq N \\
j_1 + j_2 - j_3 = n\\
j_1^2 + j_2^2 \neq j_3^2 + n^2
}} \dfrac{\sigma i }{2 \Big( j_1^2 + j_2^2 - j_3^2 - n^2 \Big)}  u(j_1)  u(j_2) \overline u(j_3) \\
& = \sum_{\substack{
|j_1|, |j_2|, |j_3| \leq N \\
j_1 + j_2 - j_3 = n\\
}} \dfrac{ \sigma i }{2 (j_3 - j_2)(j_1 - j_3)}  u(j_1)  u(j_2) \overline u(j_3) \,.
\end{aligned}
$$
Then in order to deduce the desired estimates in $(i)$-$(ii)$, it is enough to prove that the trilinear form ${\mathcal T}$ defined by 
\begin{equation}\label{capolavoro elegante}
\begin{aligned}
& {\mathcal T}[u, v, \varphi] := \sum_{n \in \Z} {\mathcal T}_n[u, v, \varphi] e^{i n x}\,, \\
& {\mathcal T}_n[u, v, \varphi] := \sum_{\substack{
|j_1|, |j_2|, |j_3| \leq N \\
j_1 + j_2 - j_3 = n\\
}} \dfrac{ \sigma i }{2 (j_3 - j_2)(j_1 - j_3)}  u(j_1)  v(j_2) \varphi(j_3) 
\end{aligned}
\end{equation}
is continuous on $H^\sigma(\T)$ and satisfies 
\begin{equation}\label{prop cal T}
\begin{aligned}
\| {\mathcal T}[u,v, \varphi] \|_{H^\s} & \lesssim_\sigma \| u \|_{H^\sigma} \| v \|_{L^2} \| \varphi\|_{L^2} + \| u \|_{L^2} \| v \|_{H^\sigma} \| \varphi \|_{L^2 } + \| u \|_{L^2} \| v \|_{L^2} \| \varphi \|_{H^\s} \\
& \forall u\,,\, v\,,\, \varphi \in H^\sigma(\T)\,. 
\end{aligned}
\end{equation}One has 
\begin{equation}\label{meccanica celeste m - 1}
\begin{aligned}
\| {\mathcal T}[u, v, \varphi] \|_{H^\s}^2 & = \sum_{n \in \Z} \langle n \rangle^{2 \sigma} | {\mathcal T}_n[u, v, \varphi]|^2 \lesssim  \sum_{n \in \Z} \Big(  \sum_{\substack{
|j_1|, |j_2|, |j_3| \leq N \\
j_1 + j_2 - j_3 = n\\
}} \dfrac{ \langle n \rangle^\sigma }{|j_3 - j_2| |j_1 - j_3|}  |u(j_1) | |v(j_2) | |\varphi(j_3) | \Big)^2 \,.
\end{aligned}
\end{equation}
By using that $n = j_1 + j_2 - j_3$, one gets that $\langle n \rangle^\sigma \lesssim_\sigma \langle j_1 \rangle^\sigma + \langle j_2 \rangle^\sigma + \langle j_3 \rangle^\sigma$, implying that 
\begin{equation}\label{meccanica celeste m}
\begin{aligned}
\| {\mathcal T}[u, v, \varphi] \|_{H^\s}^2 & \lesssim_\sigma T_1 + T_2 + T_3\,, \\
& T_1 :=  \sum_{n \in \Z} \Big(  \sum_{\substack{
|j_1|, |j_2|, |j_3| \leq N \\
j_1 + j_2 - j_3 = n\\
}} \dfrac{ 1 }{|j_3 - j_2| |j_1 - j_3|}  \langle j_1 \rangle^\sigma |u(j_1) | |v(j_2) | |\varphi(j_3) | \Big)^2  \\
& T_2 :=     \sum_{n \in \Z} \Big(  \sum_{\substack{
|j_1|, |j_2|, |j_3| \leq N \\
j_1 + j_2 - j_3 = n\\
}} \dfrac{ 1 }{|j_3 - j_2| |j_1 - j_3|}  |u(j_1) \langle j_2 \rangle^\sigma | |v(j_2) | |\varphi(j_3) | \Big)^2  \\
& T_3 :=    \sum_{n \in \Z} \Big(  \sum_{\substack{
|j_1|, |j_2|, |j_3| \leq N \\
j_1 + j_2 - j_3 = n\\
}} \dfrac{ 1 }{|j_3 - j_2| |j_1 - j_3|}  |u(j_1) | |v(j_2) | \langle j_3 \rangle^\sigma |\varphi(j_3) | \Big)^2 \,. \\
\end{aligned}
\end{equation}
The estimate of $T_1, T_2, T_3$ is similar, hence we concentrate on the estimate for $T_1$. 
By using the Cauchy Schwartz inequality, one obtains that for any $n \in \Z$
\begin{equation}\label{meccanica celeste m 2}
\begin{aligned}
& \sum_{\substack{
j_1, j_2 \in \Z
}} \dfrac{ 1 }{\langle j_1 - n \rangle \langle j_2 - n \rangle} \langle j_1 \rangle^\sigma |u(j_1) | |v(j_2) | |\varphi(j_1 + j_2 - n) |  \\
& \quad \lesssim \Big( \sum_{\substack{
j_1, j_2 \in \Z
}} \langle j_1 \rangle^{2 \sigma} |u(j_1) |^2 |v(j_2) |^2 |\varphi(j_1 + j_2 - n) |^2 \Big)^{\frac12} \Big( \sum_{\substack{
j_1, j_2 \in \Z
}} \dfrac{ 1 }{\langle j_1 - n \rangle^2 \langle j_2 - n \rangle^2}  \Big)^{\frac12} \\
& \quad \lesssim  \Big( \sum_{\substack{
j_1, j_2 \in \Z
}} \langle j_1 \rangle^{2 \sigma} |u(j_1) |^2 |v(j_2) |^2 |\varphi(j_1 + j_2 - n) |^2 \Big)^{\frac12} \Big( \sum_{\substack{
k_1, k_2 \in \Z
}} \dfrac{ 1 }{\langle k_1 \rangle^2 \langle  k_2 \rangle^2}  \Big)^{\frac12}  \\
& \quad \lesssim \Big( \sum_{\substack{
j_1, j_2 \in \Z
}} \langle j_1 \rangle^{2 \sigma} |u(j_1) |^2 |v(j_2) |^2 |\varphi(j_1 + j_2 - n) |^2 \Big)^{\frac12}\,.
 \end{aligned}
\end{equation}
Thus, $T_1$ can be estimated as 
$$
\begin{aligned}
T_1 & \lesssim \sum_{n \in \Z}   \sum_{\substack{
j_1, j_2 \in \Z
}} \langle j_1 \rangle^{2 \sigma} |u(j_1) |^2 |v(j_2) |^2 |\varphi(j_1 + j_2 - n) |^2  \\
& \lesssim \sum_{j_1 \in \Z} \langle j_1 \rangle^{2 \sigma} |u(j_1)|^2 \sum_{j_2 \in \Z} |v(j_2)|^2 \sum_{n \in \Z} |\varphi(j_1 + j_2 - n) |^2 \\
& \stackrel{j_1 + j_2 - n = k}{\lesssim} \sum_{j_1 \in \Z}  \langle j_1 \rangle^{2 \sigma} |u(j_1)|^2 \sum_{j_2 \in \Z} |v(j_2)|^2 \sum_{k \in \Z} |\varphi(k) |^2 \\
& \lesssim \| u \|_{H^\s}^2 \| v \|_{L^2}^2 \| \varphi \|_{L^2}^2 \,.
\end{aligned}
$$
By similar arguments, one can show that 
$$
T_2 \lesssim \| u \|_{L^2}^2 \| v \|_{H^\sigma}^2 \| \varphi \|_{L^2}^2\,, \quad T_3 \lesssim \| u \|_{L^2}^2 \| v \|_{L^2}^2 \| \varphi \|_{H^\s}^2 \,,
$$
implying the estimate \eqref{prop cal T}. 
Hence the items $(i)$ and $(ii)$ follow since $\nabla_{\bar u} {\mathcal F}_N = {\mathcal T}[u, u, \overline u]$ and thus
$$\nabla_{\bar u} {\mathcal F}_N(u) - \nabla_{\bar v} {\mathcal F}_N(v) = {\mathcal T}[u, u, \overline u] -
{\mathcal T}[v, v, \overline v] = 
{\mathcal T}[u-v, u, \overline u] + {\mathcal T}[v, u-v, \overline u] + {\mathcal T}[v, v, \overline{u-v} ]$$
\end{proof}
The foregoing lemma implies that the flow $\Phi^N_t \:= \Phi^{{\mathcal F}_N}_t$ is well defined in $H^\sigma(\T)$ for any $\sigma \geq 0$. This follows by a standard Picard iteration. In particular, the following lemma holds: 

\begin{lemma}\label{LWPlemmaNLS}
Let $N \in \N \cup \{ \infty \}$. For any $u_0 \in L^2(\T)$, there exists a unique local solution $u \in C^1(\R, L^2(\T))$ which solves the Cauchy problem 
$$
\begin{cases}
\partial_t u(t) = X_{{\mathcal F}_N}(u(t)) \\
u(0) = u_0 .
\end{cases}
$$
 Thus, the flow $\Phi^N_t: L^2(\T) \to L^2(\T)$
 is a well defined $C^1$ map. 
 Moreover, for all $\sigma \geq 0$, $u_0 \in {H^\sigma}(\T)$ and all $T>0$ the $H^\sigma$ norm is controlled as 
 $\| u(t) \|_{H^\s} \leq C e^{C T} \| u_0 \|_{H^\s}$, we have 
 $u \in C^1([- T, T], H^{\sigma}(\T))$ and the 
 the flow $\Phi_{{\mathcal F}_N}^t: H^\sigma(\T) \to H^\sigma(\T)$ 
 is a well defined $C^1$ map for any $\tau \in [- T, T]$.
 \end{lemma}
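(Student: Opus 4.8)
The plan is the standard Picard iteration in the scale of spaces $H^\s(\T)$, $\s\geq 0$, combined with the conservation of the $L^2$ norm along the flow and the a priori bound supplied by Lemma~\ref{prop campo hamiltoniano X FN}; the argument works simultaneously for finite $N$ and for $N=\infty$, since Lemma~\ref{prop campo hamiltoniano X FN} is uniform in $N\in\N\cup\{\infty\}$ (for finite $N$ one may alternatively invoke Cauchy--Lipschitz in $\C^{2N+1}\cong E_N$, $X_{{\mathcal F}_N}$ being a polynomial vector field). I would start at $\s=0$: by Lemma~\ref{prop campo hamiltoniano X FN}$(ii)$ the map $X_{{\mathcal F}_N}:L^2(\T)\to L^2(\T)$ is locally Lipschitz, $\|X_{{\mathcal F}_N}(u)-X_{{\mathcal F}_N}(v)\|_{L^2}\lesssim\|u-v\|_{L^2}(\|u\|_{L^2}^2+\|v\|_{L^2}^2)$, so a contraction argument for $u\mapsto u_0+\int_0^t X_{{\mathcal F}_N}(u(s))\,ds$ on $C([-T_0,T_0],B(2\|u_0\|_{L^2}))$ with $T_0\simeq(1+\|u_0\|_{L^2}^2)^{-1}$ produces a unique local solution; uniqueness on the whole interval, as well as the regularity $u\in C^1$, follow from Gr\"onwall and from $\partial_t u=X_{{\mathcal F}_N}(u)\in C^0$.

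The point that makes globalisation work is that the existence time $T_0$ depends on $\|u_0\|_{L^2}$ \emph{only}, and that this quantity is conserved: every monomial in ${\mathcal F}_N$ (see \eqref{formula cal FN}) contains two factors $u$ and two factors $\overline u$, hence ${\mathcal F}_N$ is invariant under $u\mapsto e^{i\theta}u$, equivalently $\{\|u\|_{L^2}^2,{\mathcal F}_N\}=0$, so $\|u(t)\|_{L^2}\equiv\|u_0\|_{L^2}$ along the flow. Iterating the local construction with the same time step therefore yields a global solution $u\in C^1(\R,L^2(\T))$ and a well-defined flow $\Phi^N_t:L^2(\T)\to L^2(\T)$, $t\in\R$.

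For $u_0\in H^\s(\T)$, $\s>0$, the same contraction run in $H^\s(\T)$ (using Lemma~\ref{prop campo hamiltoniano X FN}$(i)$--$(ii)$ with exponent $\s$) gives a local $H^\s$-solution, which by $L^2$-uniqueness coincides with the one above. On its maximal interval of existence, Lemma~\ref{prop campo hamiltoniano X FN}$(i)$ and mass conservation give
$$
\tfrac{d}{dt}\|u(t)\|_{H^\s}\leq\|X_{{\mathcal F}_N}(u(t))\|_{H^\s}\lesssim\|u(t)\|_{L^2}^2\|u(t)\|_{H^\s}=\|u_0\|_{L^2}^2\|u(t)\|_{H^\s},
$$
whence $\|u(t)\|_{H^\s}\leq e^{C\|u_0\|_{L^2}^2|t|}\|u_0\|_{H^\s}\leq Ce^{CT}\|u_0\|_{H^\s}$ for $|t|\leq T$; by the blow-up alternative the $H^\s$-solution is global and $u\in C^1([-T,T],H^\s(\T))$ for every $T>0$.

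Finally, for the $C^1$-regularity of the solution map: by \eqref{capolavoro elegante}, $X_{{\mathcal F}_N}$ is, in each component, a bounded homogeneous polynomial of degree three in $(u,\overline u)$, hence a $C^\infty$ (in particular $C^1$) map of $H^\s(\T)$ into itself, with $DX_{{\mathcal F}_N}(u)$ bounded on bounded sets. Differentiability of $\Phi^N_t$ in the initial datum then follows from the classical Banach-space ODE argument: the variational equation $\partial_t v=DX_{{\mathcal F}_N}(u(t))[v]$, $v(0)=h$, is linear with time-continuous, locally bounded coefficients, hence globally solvable, and $\|\Phi^N_t(u_0+h)-\Phi^N_t(u_0)-v(t)\|_{H^\s}=o(\|h\|_{H^\s})$ by Gr\"onwall; thus each $\Phi^N_t:H^\s(\T)\to H^\s(\T)$ is $C^1$ (in fact a $C^1$ diffeomorphism, with inverse $\Phi^N_{-t}$). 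The only step that requires a little care is the one already highlighted: calibrating the Picard time step to the conserved $L^2$ norm — the sole a priori quantity at our disposal — so that the blow-up alternative can be closed.
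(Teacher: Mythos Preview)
Your argument is correct and follows the same route as the paper's own proof: a Picard/contraction argument for the Volterra operator using Lemma~\ref{prop campo hamiltoniano X FN}, globalisation in $L^2$ via the conserved mass, and the exponential $H^\s$ bound by Gr\"onwall from \eqref{RickBoundE1}. You are simply more explicit than the paper on two points it leaves implicit --- the gauge invariance justifying $\{\|u\|_{L^2}^2,{\mathcal F}_N\}=0$ and the variational-equation argument for the $C^1$ regularity of $\Phi^N_t$ --- but the strategy is identical.
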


 \begin{proof}
 The local existence follows by a standard fixed point argument on the Volterra integral operator 
 $$
 u(t) \mapsto {\mathcal S}(u)(t) := u_0 + \int_0^t X_{{\mathcal F}_N}(u(\tau))\, d \tau 
 $$
 in the closed ball 
 \begin{equation}\label{BallsBsigma}
 \Big\{ u \in C^0([- \delta, \delta], H^\sigma(\T)) : \| u \|_{L^\infty_T H^\sigma_x} 
 := \sup_{t \in [- \delta, \delta]}\| u(t) \|_{H^\s} \leq R \Big\}\,. 
 \end{equation}
 This fixed point argument requires that $R$ is larger than $\| u_0 \|_{H^\s}$ and $\delta R^2 \ll 1$. If $u$ is a fixed point for ${\mathcal S}$ one also immediately gets that $u \in C^1([- \delta, \delta], H^\sigma(\T))$.  For $L^2$ initial data, the argument to globalize the solution is standard. 
 The exponential control of the $H^\sigma$ norm up to arbitrary time $T >0$ follows again looking at the  
 Volterra integral operator 
 and using \eqref{RickBoundE1}. Once we know that the $H^\sigma$ norm does not blow-up in finite time we can also extend the 
 local $H^\sigma$ flow to arbitrary times $T>0$.
 \end{proof}
 
\begin{lemma}\label{InverseFlowStab1}
Let $s \geq 0$,  $T>0$, $R>0$ and $N \in \N \cup \{ \infty \}$. Assume that $\| w^N \|_{L^2}, \| v^N \|_{L^2} \leq R$.
Then for $|t| \lesssim T  $ it holds
\begin{equation}
\label{FlowPolyTruncated}
 \| \Phi^{N}_{t} (v) - \Phi^{N}_{t} (w)\|_{H^s} \lesssim_{R, T} 
  \| v -  w \|_{H^s}\,.
\end{equation}
\end{lemma}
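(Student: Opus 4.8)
The plan is to prove the Lipschitz estimate \eqref{FlowPolyTruncated} by a standard Gronwall-type argument applied to the difference of the two flows, using the estimates on the Hamiltonian vector field provided by Lemma~\ref{prop campo hamiltoniano X FN}. The key observation is that although the data are only controlled in $L^2$, the $L^2$ norm is conserved along the flow (since $\Phi_t^N$ is Hamiltonian with respect to ${\mathcal F}_N$, and ${\mathcal F}_N$ Poisson-commutes with the $L^2$ norm, or more directly because the vector field is gauge-covariant), so $\|\Phi_t^N(v)\|_{L^2} = \|v\|_{L^2} \leq R$ and similarly for $w$, for all times. Hence the cubic vector field behaves, along these trajectories, as a \emph{linear} operator with operator norm $O(R^2)$ on $H^s$: by Lemma~\ref{prop campo hamiltoniano X FN}(i), $\|X_{{\mathcal F}_N}(u)\|_{H^s} \lesssim \|u\|_{L^2}^2 \|u\|_{H^s} \lesssim R^2 \|u\|_{H^s}$, which first gives the a priori bound $\|\Phi_t^N(v)\|_{H^s} \leq e^{CR^2|t|}\|v\|_{H^s}$ (this is essentially the content already recorded in Lemma~\ref{LWPlemmaNLS}).

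Next I would set $z(t) := \Phi_t^N(v) - \Phi_t^N(w)$ and write, using the Duhamel/Volterra form of the two flows,
\begin{equation*}
z(t) = (v - w) + \int_0^t \big( X_{{\mathcal F}_N}(\Phi_\tau^N(v)) - X_{{\mathcal F}_N}(\Phi_\tau^N(w)) \big)\, d\tau\,.
\end{equation*}
To estimate the integrand I cannot directly invoke part $(ii)$ of Lemma~\ref{prop campo hamiltoniano X FN} with $\s = s$, because that bound is quadratic in the full $H^s$ norms of the arguments and would only give a bound on a short time interval depending on the $H^s$ sizes. Instead I would use the multilinear structure more carefully: writing $\nabla_{\bar u}{\mathcal F}_N(u) = {\mathcal T}[u,u,\bar u]$ and using the trilinear decomposition
\begin{equation*}
{\mathcal T}[a,a,\bar a] - {\mathcal T}[b,b,\bar b] = {\mathcal T}[a-b,a,\bar a] + {\mathcal T}[b,a-b,\bar a] + {\mathcal T}[b,b,\overline{a-b}]
\end{equation*}
together with estimate \eqref{prop cal T}, and placing \emph{one} factor (always the difference $a-b=\Phi_\tau^N(v)-\Phi_\tau^N(w)=z(\tau)$) in $H^s$ and the remaining \emph{two} factors in $L^2$, one obtains
\begin{equation*}
\| X_{{\mathcal F}_N}(\Phi_\tau^N(v)) - X_{{\mathcal F}_N}(\Phi_\tau^N(w)) \|_{H^s} \lesssim \| z(\tau) \|_{H^s}\big( \|\Phi_\tau^N(v)\|_{L^2}^2 + \|\Phi_\tau^N(v)\|_{L^2}\|\Phi_\tau^N(w)\|_{L^2} + \|\Phi_\tau^N(w)\|_{L^2}^2 \big) \lesssim R^2 \| z(\tau) \|_{H^s}\,,
\end{equation*}
where in the last step I again use the conservation of the $L^2$ norm. (For $s=0$ one reads off $(ii)$ with $\s = 0$ directly; the point of going through \eqref{prop cal T} is precisely to keep only $L^2$ norms on the two ``non-difference'' slots for general $s$.)

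Plugging this into the Volterra identity gives $\|z(t)\|_{H^s} \leq \|v-w\|_{H^s} + CR^2 \int_0^{|t|} \|z(\tau)\|_{H^s}\,d\tau$, and Gronwall's inequality yields $\|z(t)\|_{H^s} \leq e^{CR^2|t|}\|v-w\|_{H^s}$, which for $|t| \lesssim T$ is the claimed bound with constant depending only on $R$ and $T$. The argument is uniform in $N \in \N \cup \{\infty\}$ since all the constants in Lemma~\ref{prop campo hamiltoniano X FN} and in \eqref{prop cal T} are $N$-independent. The main (and really only) subtlety is the one highlighted above: one must exploit the $L^2$-conservation to degrade the naive quadratic-in-$H^s$ dependence coming from part $(ii)$ of the vector field lemma into a bound that is \emph{linear} in $\|z\|_{H^s}$ with a coefficient controlled purely by $R$, so that Gronwall closes on the whole interval $[-T,T]$ rather than only on a short time depending on the $H^s$ norms of the data. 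I expect no other obstacle; everything else is the standard fixed-point/Gronwall machinery already set up in Lemmas~\ref{prop campo hamiltoniano X FN}--\ref{LWPlemmaNLS}.
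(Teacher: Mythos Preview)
Your overall strategy---Duhamel representation plus Gronwall, exploiting $L^2$-conservation and the vector-field estimates of Lemma~\ref{prop campo hamiltoniano X FN}---is exactly the approach the paper sketches, and you have correctly identified the only delicate point: part~$(ii)$ alone would make the Gronwall constant depend on the $H^s$ norms of the data, not just on~$R$.

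There is, however, a gap in the step where you claim to resolve this via \eqref{prop cal T}. That estimate is a \emph{sum} of three terms, one with the $H^s$ norm on each slot; it does \emph{not} let you choose which factor receives the $H^s$ norm. Applying \eqref{prop cal T} honestly to, say, ${\mathcal T}[a-b,a,\bar a]$ produces not only the term $\|a-b\|_{H^s}\|a\|_{L^2}^2$ you kept but also terms of the form $\|a-b\|_{L^2}\,\|a\|_{H^s}\,\|a\|_{L^2}$, which bring back $\|\Phi_\tau^N(v)\|_{H^s}$. Feeding these through Gronwall (using the a~priori bound of Lemma~\ref{LWPlemmaNLS}) yields a Lipschitz constant depending on $R,T$ \emph{and} on $\|v\|_{H^s}+\|w\|_{H^s}$. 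The asymmetric bound you want, $\|{\mathcal T}[u,v,\varphi]\|_{H^s}\lesssim \|u\|_{H^s}\|v\|_{L^2}\|\varphi\|_{L^2}$, is genuinely stronger than \eqref{prop cal T} and in fact fails for $s>1$ (take $u$ supported at frequency~$0$ and $v,\varphi$ at high frequency).

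That said, for $s=0$---the only case actually used downstream (see Lemma~\ref{lemma:recall0})---your argument is entirely correct, since then part~$(ii)$ with $\s=0$ already gives exactly the linear-in-$\|z\|_{L^2}$ bound with coefficient~$R^2$. The paper's own proof is just as terse and cites the same ingredients, so it does not clarify this point either; note that the fractional analogue, Lemma~\ref{InverseFlowStab}, explicitly assumes $\|v\|_{H^s},\|w\|_{H^s}\leq R$ rather than merely $L^2$ bounds, which suggests the $\alpha=1$ statement may be slightly overstated for general~$s$.
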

\begin{proof}
The argument easily follows using Lemma \ref{LWPlemmaNLS}, 
the inequalities of Lemma \eqref{prop campo hamiltoniano X FN} and the Duhamel representation of truncated flows. By time reversibility the same bound holds for the inverse flow. This ends the proof. 
\end{proof}

We now prove the convergence of the flow of the truncated vector field to the one of the non-truncated vector field. 
The flow $\Phi_t \:= \Phi_t^{\mathcal F}$ is the flow of the Hamiltonian vector field $X_{\mathcal F}$ whereas the flow of $\Phi_t^N \:= \Phi_t^{{\mathcal F}_N}$ is the flow associated to the Hamiltonian vector field $X_{{\mathcal F}_N}$ (recall \eqref{campo hamiltoniano NLS}) where ${\mathcal F}_N(u) = {\mathcal F}(\Pi_N u)$. 
\begin{lemma}\label{stime differenze campi vettoriali}
Let $u_0 \in L^2(\T)$. It holds
\begin{equation}\label{NonUnifFC}
\sup_{\tau \in [- 1, 1]} \| \Phi_\tau (u_0) - \Phi_\tau^N (\Pi_N u_0) \|_{L^2}  \to 0 \quad \text{as} \quad N \to + \infty\,. 
\end{equation}
Moreover, if $K \subset L^{2}(\T)$ is compact, then 
\begin{equation}\label{UnifFC}
\sup_{u_0 \in K} \sup_{\tau \in [- 1, 1]} \| \Phi_\tau (u_0) - \Phi_\tau^N (\Pi_N u_0) \|_{L^2}  \to 0 \quad \text{as} \quad N \to + \infty\,. 
\end{equation}
\end{lemma}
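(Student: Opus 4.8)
The plan is to prove \eqref{NonUnifFC} first by a Gronwall argument on the Duhamel difference, splitting the error into a ``vector field mismatch'' term and a ``propagated difference'' term, and then to upgrade to the uniform statement \eqref{UnifFC} via a standard compactness/equicontinuity argument. Throughout I will use that by Lemma \ref{LWPlemmaNLS} both flows are globally defined on $L^2(\T)$, and that since the truncated Hamiltonian vector field $X_{{\mathcal F}_N} = \Pi_N X_{\mathcal F} \Pi_N$ and the full field $X_{\mathcal F}$ satisfy the cubic bound \eqref{RickBoundE1} uniformly in $N \in \N \cup \{\infty\}$, the $L^2$ norms of $\Phi_\tau(u_0)$ and $\Phi_\tau^N(\Pi_N u_0)$ stay bounded by some $M = M(\|u_0\|_{L^2})$ for $\tau \in [-1,1]$ (in fact the $L^2$ norm is conserved by each flow since these are Hamiltonian flows on $L^2$, but the crude exponential bound suffices).

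For \eqref{NonUnifFC}: write $w(\tau) := \Phi_\tau(u_0)$, $w^N(\tau) := \Phi_\tau^N(\Pi_N u_0)$, and
\[
w(\tau) - w^N(\tau) = \big(u_0 - \Pi_N u_0\big) + \int_0^\tau \Big( X_{\mathcal F}(w(\sigma)) - X_{{\mathcal F}_N}(w^N(\sigma)) \Big)\, d\sigma.
\]
Insert the intermediate term $X_{{\mathcal F}_N}(w(\sigma))$ to split the integrand as
$\big(X_{\mathcal F} - X_{{\mathcal F}_N}\big)(w(\sigma)) + \big( X_{{\mathcal F}_N}(w(\sigma)) - X_{{\mathcal F}_N}(w^N(\sigma))\big)$.
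The second piece is controlled in $L^2$ by the Lipschitz estimate (Lemma \ref{prop campo hamiltoniano X FN}$(ii)$ with $\sigma = 0$, or directly from \eqref{prop cal T}) by $C M^2 \|w(\sigma) - w^N(\sigma)\|_{L^2}$. The first piece, $\|(X_{\mathcal F} - X_{{\mathcal F}_N})(w(\sigma))\|_{L^2}$, is the mismatch term: since $X_{{\mathcal F}_N}(u) = \Pi_N X_{\mathcal F}(\Pi_N u)$, one bounds it by $\|\Pi_N^\bot X_{\mathcal F}(w(\sigma))\|_{L^2} + \|\Pi_N\big(X_{\mathcal F}(w(\sigma)) - X_{\mathcal F}(\Pi_N w(\sigma))\big)\|_{L^2}$; the first term tends to $0$ because $X_{\mathcal F}(w(\sigma)) \in L^2$ is a fixed function, and the second is $\lesssim M^2 \|\Pi_N^\bot w(\sigma)\|_{L^2}$ by the Lipschitz estimate, which tends to $0$. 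To make the whole mismatch uniformly small in $\sigma \in [-1,1]$ one uses that $\sigma \mapsto w(\sigma)$ is continuous into $L^2$, hence $\{w(\sigma) : \sigma \in [-1,1]\}$ is a compact subset of $L^2$, so $\sup_{\sigma \in [-1,1]}\|\Pi_N^\bot w(\sigma)\|_{L^2} \to 0$ and likewise $\sup_\sigma \|\Pi_N^\bot X_{\mathcal F}(w(\sigma))\|_{L^2} \to 0$ (the image is again compact since $X_{\mathcal F}$ is continuous on $L^2$). Call this quantity $\e_N \to 0$. Then
\[
\|w(\tau) - w^N(\tau)\|_{L^2} \leq \|\Pi_N^\bot u_0\|_{L^2} + \e_N + C M^2 \int_0^{|\tau|} \|w(\sigma) - w^N(\sigma)\|_{L^2}\, d\sigma,
\]
and Gronwall's inequality gives $\sup_{|\tau|\leq 1}\|w(\tau)-w^N(\tau)\|_{L^2} \leq (\|\Pi_N^\bot u_0\|_{L^2} + \e_N) e^{C M^2} \to 0$.

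For the uniform version \eqref{UnifFC}: given compact $K \subset L^2(\T)$, the constant $M$ can be taken uniform over $u_0 \in K$ (as $K$ is bounded), so the Gronwall constant $e^{CM^2}$ is uniform. It then suffices to show $\sup_{u_0 \in K}\big(\|\Pi_N^\bot u_0\|_{L^2} + \e_N(u_0)\big) \to 0$. The term $\sup_{u_0 \in K}\|\Pi_N^\bot u_0\|_{L^2} \to 0$ is exactly the statement that the projections $\Pi_N \to \mathrm{Id}$ uniformly on the compact set $K$, which holds for any compact subset of $L^2$. For $\e_N(u_0)$, the cleanest route is: the map $(\tau, u_0) \mapsto \Phi_\tau(u_0)$ is continuous on $[-1,1] \times L^2$ (standard consequence of Lemma \ref{LWPlemmaNLS} and the Lipschitz bound), so $\widetilde K := \{\Phi_\tau(u_0) : \tau \in [-1,1],\ u_0 \in K\}$ is compact in $L^2$; then $X_{\mathcal F}(\widetilde K)$ is also compact, and on compact sets $\sup\|\Pi_N^\bot \cdot\|_{L^2} \to 0$, which handles both contributions to $\e_N$ uniformly. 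This gives \eqref{UnifFC}.

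The only genuinely delicate point is organizing the compactness so that the mismatch term is small \emph{uniformly in the time variable} (and, for the second part, uniformly in the initial datum): everything else is a routine Duhamel/Gronwall estimate using the already-established bounds of Lemmas \ref{prop campo hamiltoniano X FN} and \ref{LWPlemmaNLS}. One should be slightly careful that $X_{\mathcal F}(w(\sigma))$ genuinely lies in $L^2$ for $u_0 \in L^2$ — but this is immediate from \eqref{RickBoundE1} with $\sigma = 0$, so there is no regularity issue.
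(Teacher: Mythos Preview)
Your proof is correct and follows the same overall strategy as the paper: write the Duhamel difference, split the integrand into a mismatch term $(X_{\mathcal F} - X_{{\mathcal F}_N})(w(\sigma))$ and a Lipschitz-controlled term, then apply Gronwall. The paper's decomposition is marginally different (it writes $X_{\mathcal F}(u) - X_{{\mathcal F}_N}(u_N) = \Pi_N^\bot X_{\mathcal F}(u) + \Pi_N(X_{\mathcal F}(u) - X_{\mathcal F}(u_N))$, absorbing the $\Pi_N^\bot w$ contribution directly into the Gronwall term rather than into the mismatch), but this is cosmetic.

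The more substantive difference is in how the smallness of the mismatch and the passage to \eqref{UnifFC} are handled. The paper treats $\int_{-1}^1 \|\Pi_N^\bot X_{\mathcal F}(u(z))\|_{L^2}\,dz \to 0$ by dominated convergence (pointwise-in-$z$ convergence plus the uniform bound $\lesssim R^3$), and then upgrades to uniformity over compact $K$ by invoking Dini's criterion, using that $\|\Pi_N^\bot(\cdot)\|$ is monotone in $N$. Your route---observing that $\{\Phi_\tau(u_0):\tau\in[-1,1]\}$ (and, for the second part, $\widetilde K = \Phi_{[-1,1]}(K)$) is compact in $L^2$, so that $\Pi_N^\bot \to 0$ uniformly on it---is more direct and avoids the monotonicity check. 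Both arguments are valid; yours is arguably cleaner, while the paper's dominated-convergence step is slightly more economical for the pointwise part since it does not require the sup over $\sigma$ to vanish.
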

\begin{proof}
Recalling \eqref{DefTrunc1} and by setting $u_N := \Pi_N u$, one has
\begin{equation}\label{X - XN}
\begin{aligned}
X_{\mathcal F}(u) - X_{{\mathcal F}_N}(u_N) & = X_{\mathcal F}(u) - \Pi_N X_{\mathcal F}(u_N) \\
& \stackrel{{\rm Id} = \Pi_N + \Pi_N^\bot }{=}  \Pi_N^\bot X_{\mathcal F}(u) + \Pi_N X_{\mathcal F}(u) - \Pi_N X_{\mathcal F}(u_N)\\
& =  {\mathcal R'}_N(u) + 
{\mathcal R}_N(u) 
  \quad \text{where} \\
{\mathcal R'}_N(u) &:= \Pi_N X_{\mathcal F}(u) - \Pi_N X_{\mathcal F}(u_N)
\quad {\mathcal R}_N(u) := \Pi_N^\bot X_{\mathcal F}(u)\,. 
\end{aligned}
\end{equation}
The latter computation is needed in order to estimate the difference of the solutions of the following Cauchy problem. Given $U_0 \in L^2(\T)$, we consider 
\begin{equation}\label{prob cauchy e prob troncato}
\begin{cases}
\partial_t u = X_{\mathcal F}(u) \\
u(0) = u_0,
\end{cases} \qquad \begin{cases}
\partial_t u_N = X_{{\mathcal F}_N}(u_N) \\
u_N(0) = \Pi_N u_0,
\end{cases}
\end{equation}
with $\sup_{\t\in[-1,1]}\| u_N \|_{L^2}\leq R$ and $\sup_{\t\in[-1,1]}\| u \|_{L^2} \leq R$, for some $R \geq 0$. By \eqref{X - XN}, one obtains that $\delta_N := u - u_N$ satisfies the following problem: 
\begin{equation}\label{problema U - UN}
\begin{cases}
\partial_\tau \delta_N = {\mathcal R'}_N(u) + {\mathcal R}_N(u) \\
\delta_N(0) = \Pi_N^\bot u_0\,. 
\end{cases}
\end{equation}
This implies that 
$$
\delta_N(\tau) = \Pi_N^\bot u_0 + \int_0^\tau {\mathcal R'}_N(u(z, \cdot))\, d z + \int_0^\tau {\mathcal R}_N(u(z, \cdot))\, d z.
$$
since $\sup_{\t\in[-1,1]}\| u_N \|_{L^2}\leq R$ and $\sup_{\t\in[-1,1]}\| u \|_{L^2} \leq R$, by applying Lemma \ref{prop campo hamiltoniano X FN} item $(ii)$, one obtains that for any $z \in [- 1, 1]$
$$
\| {\mathcal R'}_N(u(z, \cdot)) \|_{L^2} \lesssim 
\Big(  \sup_{\t\in[-1,1]}\| u_N \|_{L^2} + \sup_{\t\in[-1,1]}\| u \|_{L^2} \Big)^2 \| \delta_N \|_{L^2} \lesssim R^2 \| \delta_N \|_{L^2}
$$
implying that $\delta_N(\tau)$ satisfies the integral inequality 
$$
\| \delta_N(\tau) \|_{L^2} \leq \| \Pi_N^\bot u_0 \|_{L^2} + \int_{- 1}^1 \| {\mathcal R}_N(u(z, \cdot)) \|_{L^2}\, d z + C R^2 \Big| \int_0^\tau \| \delta_N(z) \|_{L^2}\, d z \Big| \,.
$$
By the Gronwall inequality, one then gets that 
$$
\sup_{\t\in[-1,1]}\| \delta_N \|_{L^2} = \sup_{\tau \in [- 1, 1]} \| \delta_N(\tau) \|_{L^2} \lesssim_R \| \Pi_N^\bot u_0 \|_{L^2} + \int_{- 1}^1 \| {\mathcal R}_N(u(z, \cdot)) \|_{L^2}\, d z\,.
$$
Clearly 
\begin{equation}\label{DiniHp1}
\Pi_N^\bot u_0 \to 0, 
\quad \text{as $\quad N \to  \infty$ in $L^2(\T)$}
\end{equation} 
in $L^2(\T)$.
We shall show that 
\begin{equation}\label{cappellaio matto}
\int_{- 1}^1  \| {\mathcal R}_N(u(z, \cdot)) \|_{L^2}\, d z \to 0 
\end{equation}
 as $N \to + \infty$. This can be proved by the Lebesgue dominate convergence. Indeed, again by applying Lemma \ref{prop campo hamiltoniano X FN}-$(i)$, one gets that for any $\tau \in [- 1, 1]$
$$
\| X_{\mathcal F}(u(\tau)) \|_{L^2} \lesssim \| u(\tau) \|_{L^2}^3 \lesssim R^3\,.
$$
This implies that for any $\tau \in [- 1, 1]$, 
\begin{equation}\label{DiniHp2}
\Pi_N^\bot X_{{\mathcal F}}(u (\tau)) \to 0 \quad \text{as $\quad N \to  \infty$ in $L^2(\T)$}
\end{equation}
 and 
$$
\sup_{N \in \N} \sup_{\tau \in [- 1, 1]} \| {\mathcal R}_N(u(\tau)) \|_{L^2} \lesssim R^3\,.
$$
Therefore \eqref{cappellaio matto} follows by dominated convergence. 
This completes the proof of \eqref{NonUnifFC}. To show \eqref{UnifFC} we note that the 
sequences \eqref{DiniHp1}-\eqref{DiniHp2} are monotone and that the functions 
$$
(U_0, \tau) \in (K, [-1,1]) \to  \Phi_t(u_0) -  \Phi_t^N(u_0)
$$  
are continuous (by Lemma \ref{InverseFlowStab}), thus the point-wise convergence to zero is promoted to uniform convergence by using the Dini criterion.
\end{proof}

Recall that we are abbreviating $\Phi \:= (\Phi_t)\big|_{t=1}$ and 
$\Phi^N \:= (\Phi_t^N)\big|_{t=1}$

\begin{lemma}\label{lemma:recall0}
Let $A$ be a compact set in the $L^2(\T)$ topology. For all $\varepsilon >0$ we can find $N_{\varepsilon}$
sufficiently large, so that
\begin{equation}\label{FlowStability1}
\Phi(A) \subset 
\Phi^{N} (A +  B(\varepsilon)), \qquad \forall N>N_{\varepsilon}.
\end{equation}
\end{lemma}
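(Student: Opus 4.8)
The plan is to exhibit, for every point of $\Phi(A)$, a preimage under $\Phi^N$ lying in the enlarged set $A+B(\varepsilon)$, by composing with the inverse Birkhoff flow. Recall that by Lemma~\ref{LWPlemmaNLS} the map $\Phi^N$ is a $C^1$ bijection of $L^2(\T)$ whose inverse is the time-reversed flow $\Phi^N_{-1}:=(\Phi_t^N)\big|_{t=-1}$. Fix $u_0\in A$ and set
\[
  w_N := \Phi^N_{-1}\bigl(\Phi(u_0)\bigr)\in L^2(\T),
\]
so that $\Phi^N(w_N)=\Phi(u_0)$ by construction. Hence \eqref{FlowStability1} follows once we show
\[
  \sup_{u_0\in A}\ \|w_N-u_0\|_{L^2}\ \longrightarrow\ 0 \quad\text{as}\quad N\to\infty :
\]
for $N$ so large that this supremum is $\le\varepsilon$ we get $w_N\in u_0+B(\varepsilon)\subset A+B(\varepsilon)$, whence $\Phi(u_0)=\Phi^N(w_N)\in\Phi^N(A+B(\varepsilon))$ for every $u_0\in A$.

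To estimate $\|w_N-u_0\|_{L^2}$ I would write $\Pi_N u_0=\Phi^N_{-1}\bigl(\Phi^N(\Pi_N u_0)\bigr)$ and split by the triangle inequality
\[
  \|w_N-u_0\|_{L^2}\ \le\ \bigl\|\Phi^N_{-1}(\Phi(u_0))-\Phi^N_{-1}(\Phi^N(\Pi_N u_0))\bigr\|_{L^2}\ +\ \|\Pi_N^\bot u_0\|_{L^2}.
\]
Since $A$ is $L^2$-compact it is bounded, $R_0:=\sup_{u_0\in A}\|u_0\|_{L^2}<\infty$, so by the a priori bound of Lemma~\ref{LWPlemmaNLS} (with $\sigma=0$, $T=1$), applied to both $\Phi_t$ and $\Phi^N_t$, the two arguments $\Phi(u_0)$ and $\Phi^N(\Pi_N u_0)$ stay in a fixed ball $B(R)$ with $R$ depending only on $R_0$ and not on $N$ or $u_0\in A$. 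Hence Lemma~\ref{InverseFlowStab1}, applied to the inverse flow ($t=-1$), gives a Lipschitz bound with constant independent of $N$,
\[
  \bigl\|\Phi^N_{-1}(\Phi(u_0))-\Phi^N_{-1}(\Phi^N(\Pi_N u_0))\bigr\|_{L^2}\ \lesssim_{R}\ \bigl\|\Phi(u_0)-\Phi^N(\Pi_N u_0)\bigr\|_{L^2},
\]
and the right-hand side tends to $0$ uniformly in $u_0\in A$ by \eqref{UnifFC} of Lemma~\ref{stime differenze campi vettoriali}. For the remaining term, a precompact subset of $L^2(\T)$ has uniformly small high-frequency tails — cover $A$ by finitely many $L^2$-balls of radius $\varepsilon/4$ and use $\|\Pi_N^\bot\|_{L^2\to L^2}\le 1$ — so $\sup_{u_0\in A}\|\Pi_N^\bot u_0\|_{L^2}\to 0$ as well. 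Adding the two contributions yields the claimed uniform convergence.

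The only genuinely delicate point is the \emph{uniformity} of the Lipschitz constant of $(\Phi^N)^{-1}$: it must depend neither on $N$ nor on the base point $u_0\in A$. This is precisely why one first secures the uniform $L^2$ bound $\|\Phi(u_0)\|_{L^2},\|\Phi^N(\Pi_N u_0)\|_{L^2}\le R$, from compactness of $A$ together with the a priori control of the $L^2$ norm along the (truncated) Birkhoff flows, so that Lemma~\ref{InverseFlowStab1} may be invoked with a single radius $R$. Once this is in place the statement is a soft combination of the uniform convergence \eqref{UnifFC} and the elementary uniform-tail property of precompact sets, and no new estimate is needed.
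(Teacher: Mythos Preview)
Your proof is correct and follows essentially the same route as the paper: define $w_N=(\Phi^N)^{-1}\Phi(u_0)$, reduce \eqref{FlowStability1} to $\sup_{u_0\in A}\|w_N-u_0\|_{L^2}\to 0$, and obtain the latter via the uniform Lipschitz bound for $(\Phi^N)^{-1}$ (Lemma~\ref{InverseFlowStab1}) combined with the uniform convergence of Lemma~\ref{stime differenze campi vettoriali}. The only cosmetic difference is that you insert $\Pi_N u_0$ and handle the tail $\|\Pi_N^\bot u_0\|_{L^2}$ explicitly via the uniform-small-tails property of precompact sets, whereas the paper writes $v=(\Phi^N)^{-1}\Phi^N(v)$ directly and absorbs this tail into the application of \eqref{UnifFC}; both are equivalent.
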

\begin{proof}
Set
$$
\Psi_N(v) := (\Phi^N)^{-1} \Phi (v)\,.
$$
Then \eqref{FlowStability}
follows once we prove that for all $v \in A$ and all $\e>0$ there is $N_\e$ large enough such that
\begin{equation}\label{laterBettExpl1}
\| v - \Psi_N (v)\|_{L^{2}(\T)} < \varepsilon\,\quad\text{for all } N>N_\e.
\end{equation}
Indeed, assuming \eqref{laterBettExpl1} we have for any $v\in A$
$$
\Psi_N (v)\in v+B(\e)\subset A+B(\e)
$$
thus
$$
\Phi  (v)=\Phi^{N} \Psi_N (v)\in \Phi^{N}(v+B(\e))\,,
$$
which readily implies \eqref{FlowStability1}. The proof of \eqref{laterBettExpl1}:
\bea
\| v - \Psi_N (v)\|_{L^{2}(\T)}&=& \|(\Phi^N)^{-1} \Phi^N  (v)-(\Phi^N)^{-1} \Phi  (v)\|_{L^{2}(\T)}\nn\\
&\lesssim_R& \|\Phi^N  (v)-\Phi  (v)\|_{L^{2}(\T)}\nn\,,
\eea
where we used Lemma \ref{InverseFlowStab1} and the fact that the $L^2$ norms of 
$\Phi^N  (v)$ and of $\Phi  (v)$ are bounded by $\|v\|_{L^2} = R$. 
By Lemma \ref{stime differenze campi vettoriali} for all $\e>0$ there is a $N_\e$ such that 
$$
\|\Phi^N  (v)-\Phi  (v)\|_{L^{2}(\T)}\leq \e\,,
$$
and \eqref{laterBettExpl} is proved.
\end{proof}


\section{The flow of the Birkhoff map for the fractional NLS}\label{section:flow-fract}

In this section we prove local well-posedness for the flow of the Birkhoff map associated to the fractional NLS, \ie $\a\neq1$. We will do it for $\a>\frac34$, that is sufficient for our purposes. 

Let us shorten
\be\label{eq:defPhi}
\Phi(j_1, j_2, j_3) := j_1^{2 \alpha} + j_2^{2 \alpha} - j_3^{2 \alpha} - n^{2 \alpha} \quad \text{with} \quad  n= j_1 + j_2 - j_3\,. 
\ee
We will crucially use the following lower bound (see for instance \cite[Lemma 2.4]{Forlano})
\be\label{lower bound divisori frazionari}
|\Phi(j_1, j_2, j_3) | \gtrsim |j_1 - j_3| |j_2 - j_3| \big( \langle j_1 \rangle + \langle j_2 \rangle + \langle j_3 \rangle  \big)^{ - (2 - 2 \alpha)}\,. 
\ee

 \begin{lemma}\label{prop campo hamiltoniano X FN frazionario rick}
Let $\alpha > \frac34$ and $N \in \N \cup \{ \infty \}$. Then the following holds.

$(i)$ For any $s \geq 2 - 2 \alpha$, for any $u \in H^s(\T)$, 
\begin{equation}\label{RickBoundE1}
\| X_{{\mathcal F}_N}(u) \|_{H^s} \lesssim_s  \| u \|_{H^s}^2 \| u \|_{L^2}\,. 
\end{equation}
$(ii)$ For any $s \geq 2 - 2 \alpha$, for any $u, v \in H^s (\T)$, one has 
$$
\|  X_{{\mathcal F}_N}(u) - X_{{\mathcal F}_N}(v) \|_{H^s} \lesssim 
\|  u - v \|_{H^s} ( \| u \|_{H^s}^2 + \| v \|_{H^s}^2) .
$$
\end{lemma}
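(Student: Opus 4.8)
## Proof Plan for Lemma \ref{prop campo hamiltoniano X FN frazionario rick}

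The proof follows the scheme of the proof of Lemma \ref{prop campo hamiltoniano X FN}, with the exact factorization of the small divisors (available only at $\a=1$) replaced by the lower bound \eqref{lower bound divisori frazionari}. We may assume $0\le 2-2\a\le s$, which covers the range $\a\in(\tfrac34,1]$. Exactly as there, $X_{{\mathcal F}_N}=(i\nabla_{\bar u}{\mathcal F}_N,-i\nabla_u{\mathcal F}_N)$ and ${\mathcal F}_N$ real-valued give $\|X_{{\mathcal F}_N}(u)\|_{H^s}=\|\nabla_{\bar u}{\mathcal F}_N(u)\|_{H^s}$, while $\nabla_{\bar u}{\mathcal F}_N(u)={\mathcal T}[u,u,\bar u]$ for the trilinear form ${\mathcal T}$ obtained from \eqref{formula cal FN} by differentiation in $\bar u$, whose $n$-th Fourier coefficient, by \eqref{lower bound divisori frazionari}, satisfies
\begin{equation}\label{T frazionaria bound puntuale}
\big|{\mathcal T}_n[u,v,\varphi]\big| \lesssim \sum_{\substack{|j_1|,|j_2|,|j_3|\le N\\ j_1+j_2-j_3=n}}\frac{\big(\langle j_1\rangle+\langle j_2\rangle+\langle j_3\rangle\big)^{2-2\a}}{|j_1-j_3|\,|j_2-j_3|}\,|u(j_1)|\,|v(j_2)|\,|\varphi(j_3)|\,,
\end{equation}
the sum running over $j_1\ne j_3$, $j_2\ne j_3$ (where $\Phi(j_1,j_2,j_3)\ne 0$), and ${\mathcal T}[u,v,\varphi]=\sum_{n\in\Z}{\mathcal T}_n[u,v,\varphi]e^{inx}$. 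Since moreover ${\mathcal T}[u,u,\bar u]-{\mathcal T}[v,v,\bar v]={\mathcal T}[u-v,u,\bar u]+{\mathcal T}[v,u-v,\bar u]+{\mathcal T}[v,v,\overline{u-v}]$ and $\|u-v\|_{L^2}\le\|u-v\|_{H^s}$, both $(i)$ and $(ii)$ follow once we establish the trilinear estimate
\begin{equation}\label{prop cal T frazionaria}
\|{\mathcal T}[u,v,\varphi]\|_{H^s}\lesssim_s \|u\|_{H^s}\|v\|_{L^2}\|\varphi\|_{H^s}+\|u\|_{L^2}\|v\|_{H^s}\|\varphi\|_{H^s}\qquad \forall\, u,v,\varphi\in H^s(\T)\,.
\end{equation}

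To prove \eqref{prop cal T frazionaria} we perform in \eqref{T frazionaria bound puntuale} the change of index $j_3=j_1+j_2-n$: then $j_1-j_3=n-j_2$, $j_2-j_3=n-j_1$, so that (on the support of the sum) the denominator $|j_1-j_3|\,|j_2-j_3|$ is comparable to $\langle n-j_1\rangle\langle n-j_2\rangle$ and the kernel becomes of product type in the gap variables $n-j_1$, $n-j_2$, exactly as in \eqref{meccanica celeste m 2}; moreover $\langle j_1\rangle+\langle j_2\rangle+\langle j_3\rangle\simeq\langle j_1\rangle+\langle j_2\rangle+\langle n\rangle$. Using $\langle n\rangle\lesssim\langle j_1\rangle+\langle j_2\rangle+\langle j_3\rangle$ and $s\ge 0$ we bound $\langle n\rangle^s(\langle j_1\rangle+\langle j_2\rangle+\langle j_3\rangle)^{2-2\a}\lesssim_s\langle j_1\rangle^{s+2-2\a}+\langle j_2\rangle^{s+2-2\a}+\langle j_3\rangle^{s+2-2\a}$, which splits $\|{\mathcal T}[u,v,\varphi]\|_{H^s}^2$ into three contributions $T_a$, $a=1,2,3$, the $a$-th carrying the weight $\langle j_a\rangle^{s+2-2\a}$ on the factor of index $j_a$. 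In each $T_a$ we write $\langle j_a\rangle^{s+2-2\a}=\langle j_a\rangle^s\langle j_a\rangle^{2-2\a}$ and move the surplus exponent $2-2\a\in[0,\tfrac12)$ via $\langle j_a\rangle^{2-2\a}\lesssim\langle j_a-j_b\rangle^{2-2\a}+\langle j_b\rangle^{2-2\a}$, where $j_a-j_b$ is chosen to be one of the two gap variables present in the kernel ($j_b=j_3$ when $a\in\{1,2\}$, and $j_b\in\{j_1,j_2\}$ when $a=3$). If the surplus is charged to the gap variable, the corresponding kernel factor keeps the exponent $1-(2-2\a)=2\a-1>\tfrac12$, hence $\sum_{k\in\Z}\langle k\rangle^{-2(2\a-1)}<\infty$; the Cauchy–Schwarz argument of \eqref{meccanica celeste m 2} then goes through and, summing over $n$ with $k=j_1+j_2-n$ to decouple $\varphi$, produces a bound $\|u\|_{H^s}\|v\|_{L^2}\|\varphi\|_{L^2}$ (or a permutation thereof), all controlled by the right-hand side of \eqref{prop cal T frazionaria}. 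If instead the surplus is charged to $\langle j_b\rangle$, then since $2-2\a\le s$ it is absorbed into an $H^s$ norm of the $j_b$-factor, the leftover kernel $\langle n-j_1\rangle^{-1}\langle n-j_2\rangle^{-1}$ is square-summable, and the same Cauchy–Schwarz step yields exactly one of the two terms on the right-hand side of \eqref{prop cal T frazionaria}. Adding up the contributions proves \eqref{prop cal T frazionaria}, whence $(i)$ (taking $v=u$, $\varphi=\bar u$ and $\|\bar u\|_{H^s}=\|u\|_{H^s}$) and $(ii)$ (via the trilinear expansion above).

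The only genuinely new ingredient with respect to the case $\a=1$ of Lemma \ref{prop campo hamiltoniano X FN} is the distribution of the $(2-2\a)$-derivative loss produced by \eqref{lower bound divisori frazionari}: charging it to a gap variable is what forces $\a>\tfrac34$ (so that $\sum_k\langle k\rangle^{-2(2\a-1)}<\infty$), whereas charging it to an input frequency is what forces $s\ge 2-2\a$. Everything else — the Cauchy–Schwarz bound, the summation in $n$, and the reduction of $(i)$ and $(ii)$ to the trilinear estimate — is carried out exactly as in the proof of Lemma \ref{prop campo hamiltoniano X FN}; this also explains why the main obstacle here is precisely the book-keeping of this derivative loss rather than any new analytic difficulty.
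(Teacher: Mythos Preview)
Your proof is correct and follows essentially the same approach as the paper's: both reduce $(i)$ and $(ii)$ to a trilinear estimate for ${\mathcal T}$, use the lower bound \eqref{lower bound divisori frazionari}, distribute the weight $\langle n\rangle^s(\langle j_1\rangle+\langle j_2\rangle+\langle j_3\rangle)^{2-2\a}$ onto the input frequencies via the triangle inequality, and close with the same Cauchy--Schwarz step as in \eqref{meccanica celeste m 2}, the condition $\a>\tfrac34$ entering through the summability of $\langle k\rangle^{-2(2\a-1)}$. The only difference is organizational: you combine the two weight splittings into a single step $\langle n\rangle^s(\cdots)^{2-2\a}\lesssim\sum_a\langle j_a\rangle^{s+2-2\a}$ and then dispose of the surplus $\langle j_a\rangle^{2-2\a}$ by an \emph{additive} triangle inequality (gap variable \emph{or} another input), whereas the paper first splits $\langle n\rangle^s$ and $(\cdots)^{2-2\a}$ separately (giving nine pieces $A_i$) and then uses the \emph{multiplicative} bound $\langle j_a\rangle^{2-2\a}\lesssim\langle j_b\rangle^{2-2\a}\langle j_a-j_b\rangle^{2-2\a}$, charging both simultaneously; this yields the same final estimates and the same thresholds on $\a$ and $s$.
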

\begin{proof}
The Hamiltonian vector field is $X_{{\mathcal F}_N} = (i \nabla_{\bar u} {\mathcal F}_N, - i \nabla_u {\mathcal F}_N)$ and one computes that 
$$
\begin{aligned}
\partial_{\bar u_n} {\mathcal F}_N & = \sum_{\substack{
|j_1|, |j_2|, |j_3| \leq N \\
j_1 + j_2 - j_3 = n\\
j_1^{2 \alpha} + j_2^{2 \alpha} \neq j_3^{2 \alpha} + n^{2 \alpha}
}} \dfrac{\sigma i }{2  \Phi(j_1, j_2, j_3)}  u(j_1)  u(j_2) \overline u(j_3)\,.
\end{aligned}
$$

Then in order to deduce the desired estimates in $(i)$-$(ii)$, it is enough to prove that the trilinear form ${\mathcal T}$ defined by 
\begin{equation}\label{capolavoro elegante}
\begin{aligned}
& {\mathcal T}[u, v, \varphi] := \sum_{n \in \Z} {\mathcal T}_n[u, v, \varphi] e^{i n x}\,, \\
& {\mathcal T}_n[u, v, \varphi] := \sum_{\substack{
|j_1|, |j_2|, |j_3| \leq N \\
j_1 + j_2 - j_3 = n\\
}} \dfrac{ \sigma i }{2 \Phi(j_1, j_2, j_3)}  u(j_1)  v(j_2) \varphi(j_3) 
\end{aligned}
\end{equation}
is continuous on $H^s$ for $s \geq a = 2 - 2 \alpha$, namely
\begin{equation}\label{prop cal T rick frazionario}
\| {\mathcal T}[u, v, \varphi] \|_{H^s}  \lesssim_s \| u \|_{H^s} \| v \|_{H^s} \| \varphi \|_{L^2} +  \| u \|_{H^s} \| v \|_{L^2} \| \varphi \|_{H^s}  + \| u \|_{L^2} \| v \|_{H^s} \| \varphi \|_{H^s} \,. 
\end{equation}

We have 
\begin{equation}\label{meccanica celeste m - 1}
\begin{aligned}
\| {\mathcal T}[u, v, \varphi] \|_{H^s}^2 & = \sum_{n \in \Z} \langle n \rangle^{2 s} | {\mathcal T}_n[u, v, \varphi]|^2  \\
& \stackrel{\eqref{lower bound divisori frazionari}}{\lesssim}  \sum_{n \in \Z} \Big(  \sum_{\substack{
|j_1|, |j_2|, |j_3| \leq N \\
j_1 + j_2 - j_3 = n\\
}} \dfrac{ \langle n \rangle^\sigma \big( \langle j_1 \rangle + \langle j_2 \rangle + \langle j_3 \rangle  \big)^{a} }{|j_3 - j_2| |j_1 - j_3|}  |u(j_1) | |v(j_2) | |\varphi(j_3) | \Big)^2 \,.
\end{aligned}
\end{equation}
By using that $n = j_1 + j_2 - j_3$, one gets that $\langle n \rangle^s \lesssim_\sigma \langle j_1 \rangle^s + \langle j_2 \rangle^s + \langle j_3 \rangle^s$, implying that 
\begin{equation}\label{meccanica celeste m}
\begin{aligned}
\| {\mathcal T}[u, v, \varphi] \|_{H^\s}^2 & \lesssim_s T_1 + T_2 + T_3\,, \\
& T_1 :=  \sum_{n \in \Z} \Big(  \sum_{\substack{
|j_1|, |j_2|, |j_3| \leq N \\
j_1 + j_2 - j_3 = n\\
}} \dfrac{ \big( \langle j_1 \rangle + \langle j_2 \rangle + \langle j_3 \rangle  \big)^{a}  \langle j_1 \rangle^s }{|j_3 - j_2| |j_1 - j_3|}  |u(j_1) | |v(j_2) | |\varphi(j_3) | \Big)^2  \\
& T_2 :=     \sum_{n \in \Z} \Big(  \sum_{\substack{
|j_1|, |j_2|, |j_3| \leq N \\
j_1 + j_2 - j_3 = n\\
}} \dfrac{ \big( \langle j_1 \rangle + \langle j_2 \rangle + \langle j_3 \rangle  \big)^{a}  \langle j_2 \rangle^s }{|j_3 - j_2| |j_1 - j_3|}  |u(j_1)  | |v(j_2) | |\varphi(j_3) | \Big)^2  \\
& T_3 :=    \sum_{n \in \Z} \Big(  \sum_{\substack{
|j_1|, |j_2|, |j_3| \leq N \\
j_1 + j_2 - j_3 = n\\
}} \dfrac{ \big( \langle j_1 \rangle + \langle j_2 \rangle + \langle j_3 \rangle  \big)^{a}  \langle j_3 \rangle^s }{|j_3 - j_2| |j_1 - j_3|}  |u(j_1) | |v(j_2) |  |\varphi(j_3) | \Big)^2 \,. \\
\end{aligned}
\end{equation}
The estimates of $T_1, T_2, T_3$ can be done similarly, hence we estimate only the term $T_1$. The term $T_1$ can be split into three terms, namely
\begin{equation}\label{splitting T1 frazionario}
\begin{aligned}
T_1 & \lesssim A_1 + A_2 + A_3\,, \\
A_1 & :=  \sum_{n \in \Z} \Big(  \sum_{\substack{
|j_1|, |j_2|, |j_3| \leq N \\
j_1 + j_2 - j_3 = n\\
}} \dfrac{  \langle j_1 \rangle^{s + a}  }{|j_3 - j_2| |j_1 - j_3|}  |u(j_1) | |v(j_2) | |\varphi(j_3) | \Big)^2 \,, \\
A_2 & :=  \sum_{n \in \Z} \Big(  \sum_{\substack{
|j_1|, |j_2|, |j_3| \leq N \\
j_1 + j_2 - j_3 = n\\
}} \dfrac{   \langle j_1 \rangle^s \langle j_2 \rangle^a }{|j_3 - j_2| |j_1 - j_3|}  |u(j_1) | |v(j_2) | |\varphi(j_3) | \Big)^2 \,, \\
A_3 & :=  \sum_{n \in \Z} \Big(  \sum_{\substack{
|j_1|, |j_2|, |j_3| \leq N \\
j_1 + j_2 - j_3 = n\\
}} \dfrac{    \langle j_1 \rangle^s \langle j_3 \rangle^{a} }{|j_3 - j_2| |j_1 - j_3|}  |u(j_1) | |v(j_2) | |\varphi(j_3) | \Big)^2 \,. 
\end{aligned}
\end{equation}
The terms $A_1, A_2, A_3$ are estimated by similar techniques, hence we only provide an estimate of $A_1$. By the triangular inequality, one has 
$$
\begin{aligned}
\langle j_1 \rangle^{s + a} & = \langle j_1 \rangle^s \langle j_1 \rangle^a \lesssim \langle j_1 \rangle^s (\langle j_3 \rangle^a + \langle j_1 - j_3 \rangle^a) \\
& \lesssim  \langle j_1 \rangle^s \langle j_3 \rangle^a  \langle j_1 - j_3 \rangle^a \,.
\end{aligned}
$$
The latter inequality, together with the Cauchy Schwartz inequality implies that for any $n \in \Z$
\begin{equation}\label{meccanica celeste m 2}
\begin{aligned}
& \sum_{\substack{
j_1, j_2 \in \Z
}} \dfrac{ 1 }{\langle j_1 - n \rangle \langle j_2 - n \rangle^{1 - a}} \langle j_1 \rangle^s |u(j_1) |  |v(j_2) | \langle j_1 + j_2 - n \rangle^a |\varphi(j_1 + j_2 - n) |  \\
& \quad \lesssim \Big( \sum_{\substack{
j_1, j_2 \in \Z
}} \langle j_1 \rangle^{2 s} |u(j_1) |^2 |v(j_2) |^2 \langle j_1 + j_2 - n \rangle^{2 a} |\varphi(j_1 + j_2 - n) |^2 \Big)^{\frac12} \Big( \sum_{\substack{
j_1, j_2 \in \Z
}} \dfrac{ 1 }{\langle j_1 - n \rangle^2 \langle j_2 - n \rangle^{2(1 - a)}}  \Big)^{\frac12} \\
& \quad \lesssim  \Big( \sum_{\substack{
j_1, j_2 \in \Z
}} \langle j_1 \rangle^{2 s} |u(j_1) |^2 |v(j_2) |^2 \langle j_1 + j_2 - n \rangle^{2 a} |\varphi(j_1 + j_2 - n) |^2 \Big)^{\frac12} \Big( \sum_{\substack{
k_1, k_2 \in \Z
}} \dfrac{ 1 }{\langle k_1 \rangle^2 \langle  k_2 \rangle^{2(1 - a)}}  \Big)^{\frac12}  \\
& \quad \lesssim \Big( \sum_{\substack{
j_1, j_2 \in \Z
}} \langle j_1 \rangle^{2 s} |u(j_1) |^2 |v(j_2) |^2 \langle j_1 + j_2 - n \rangle^{2 a} |\varphi(j_1 + j_2 - n) |^2 \Big)^{\frac12}
 \end{aligned}
\end{equation}
by using that $2(1 - a) > 1$. 
Thus, $A_1$ can be estimated as 
$$
\begin{aligned}
A_1 & \lesssim \sum_{n \in \Z}   \sum_{\substack{
j_1, j_2 \in \Z
}} \langle j_1 \rangle^{2 s} |u(j_1) |^2 |v(j_2) |^2 \langle j_1 + j_2 - n \rangle^{2 a} |\varphi(j_1 + j_2 - n) |^2  \\
& \lesssim \sum_{j_1 \in \Z} \langle j_1 \rangle^{2 s} |u(j_1)|^2 \sum_{j_2 \in \Z} |v(j_2)|^2 \sum_{n \in \Z} \langle j_1 + j_2 - n \rangle^{2 a} |\varphi(j_1 + j_2 - n) |^2 \\
& \stackrel{j_1 + j_2 - n = k}{\lesssim} \sum_{j_1 \in \Z}  \langle j_1 \rangle^{2 s} |u(j_1)|^2 \sum_{j_2 \in \Z} |v(j_2)|^2 \sum_{k \in \Z}\langle k \rangle^{2 a} |\varphi(k) |^2 \\
& \lesssim \| u \|_{H^s}^2 \| v \|_{L^2}^2 \| \varphi \|_{a}^2 \stackrel{s \geq a}{\lesssim} \| u \|_{H^s} \| v \|_{L^2} \| \varphi \|_{H^s} \,.
\end{aligned}
$$
By similar arguments, one can estimate $A_2, A_3, T_2, T_3$ and hence  
one obtains the bound \eqref{prop cal T rick frazionario}. 
Hence the items $(i)$ and $(ii)$ follow since $\nabla_{\bar u} {\mathcal F}_N = {\mathcal T}[u, u, \overline u]$ and thus
$$\nabla_{\bar u} {\mathcal F}_N(u) - \nabla_{\bar u} {\mathcal F}_N(v) = {\mathcal T}[u, u, \overline u] -
{\mathcal T}[v, v, \overline v] = 
{\mathcal T}[u-v, u, \overline u] + {\mathcal T}[v, u-v, \overline u] + {\mathcal T}[v, v, \overline{u-v} ]$$
\end{proof}
%
%

The latter lemma implies that the flow $\Phi^N_t \:= \Phi^{{\mathcal F}_N}_t$ is well defined in $H^s(\T)$ for any 
$s  \geq  2 - 2 \alpha$ when $\alpha >3/4$. This follows by a standard Picard iteration. More precisely, the following lemma holds: 
\begin{lemma}\label{LWPlemma}
Let $\alpha >3/4$ and $N \in \N \cup \{ \infty \}$. For any $u_0 \in H^s(\T)$, $s \geq 2 - 2 \alpha$, $\| u_0 \|_{H^s} \leq R$, there exists a time $T_\alpha \:= T_\alpha(R) = \frac{\mathtt c_\alpha}{R^2} > 0$, $\mathtt c_\alpha \ll 1$ and a  unique local solution $u \in C^1([- T_\alpha, T_\alpha], H^s(\T))$ which solves the Cauchy problem 
$$
\begin{cases}
\partial_t u(t) = X_{{\mathcal F}_N}(u(t)) \\
u(0) = u_0 .
\end{cases}
$$
 Thus, the flow $\Phi^N_t: H^s(\T) \to H^s(\T)$, $t \in [- T_\alpha, T_\alpha]$
 is a well defined $C^1$ map and 
 $\| u(t) \|_{H^s} \lesssim  \| u_0 \|_{H^s}$. In the case where $\alpha = 1$, one has that $T_\alpha = 1$. 
 \end{lemma}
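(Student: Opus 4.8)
The plan is to prove Lemma~\ref{LWPlemma} by a contraction mapping argument on the Volterra integral formulation of the Cauchy problem, mirroring the proof of Lemma~\ref{LWPlemmaNLS} but now feeding in the fractional vector field bounds of Lemma~\ref{prop campo hamiltoniano X FN frazionario rick} in place of Lemma~\ref{prop campo hamiltoniano X FN}. Given $u_0 \in H^s(\T)$ with $s \geq 2 - 2\a$ and $\|u_0\|_{H^s} \leq R$, I would study the map
\[ {\mathcal S}(u)(t) := u_0 + \int_0^t X_{{\mathcal F}_N}(u(\tau))\, d\tau \]
on the closed ball $\{ u \in C^0([-\delta,\delta], H^s(\T)) : \sup_{|t|\leq\delta} \|u(t)\|_{H^s} \leq 2R \}$. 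Since by Lemma~\ref{prop campo hamiltoniano X FN frazionario rick}$(i)$ one has $X_{{\mathcal F}_N} : H^s(\T) \to H^s(\T)$ with $\|X_{{\mathcal F}_N}(u)\|_{H^s} \lesssim_s \|u\|_{H^s}^2 \|u\|_{L^2} \leq \|u\|_{H^s}^3$, the map ${\mathcal S}$ sends this ball into itself provided $\delta R^2 \leq \mathtt c_\a$ for a small enough $\mathtt c_\a \ll 1$ (indeed $\sup_{|t|\leq\delta}\|{\mathcal S}(u)(t)\|_{H^s} \leq R + C_s \delta (2R)^3 \leq 2R$), and by Lemma~\ref{prop campo hamiltoniano X FN frazionario rick}$(ii)$ it is a contraction there, $\sup_{|t|\leq\delta}\|{\mathcal S}(u)(t) - {\mathcal S}(v)(t)\|_{H^s} \leq C_s \delta (2R)^2 \sup_{|\tau|\leq\delta}\|u(\tau) - v(\tau)\|_{H^s} \leq \tfrac12 \sup_{|\tau|\leq\delta}\|u(\tau) - v(\tau)\|_{H^s}$, after possibly shrinking $\mathtt c_\a$. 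Taking $\delta = T_\a := \mathtt c_\a / R^2$, the Banach fixed point theorem then yields a unique fixed point $u \in C^0([-T_\a, T_\a], H^s(\T))$ satisfying $\|u(t)\|_{H^s} \leq 2R \lesssim \|u_0\|_{H^s}$.

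Next I would promote this to the $C^1$ statement: since $\tau \mapsto X_{{\mathcal F}_N}(u(\tau))$ is continuous with values in $H^s(\T)$ (Lemma~\ref{prop campo hamiltoniano X FN frazionario rick} again), the identity $u = {\mathcal S}(u)$ forces $u \in C^1([-T_\a, T_\a], H^s(\T))$ and shows that it solves $\partial_t u = X_{{\mathcal F}_N}(u)$ with $u(0) = u_0$; uniqueness in this class follows from uniqueness of the fixed point together with a Gronwall estimate on the difference of two solutions using item $(ii)$. All the constants involved are independent of $N \in \N \cup \{\infty\}$ because the trilinear bound~\eqref{prop cal T rick frazionario} behind Lemma~\ref{prop campo hamiltoniano X FN frazionario rick} is, so $\Phi^N_t := \Phi^{{\mathcal F}_N}_t$ is a well-defined map $H^s(\T) \to H^s(\T)$ for $|t| \leq T_\a$. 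To obtain that it is a $C^1$ map of the initial datum, I would differentiate the fixed point identity in $u_0$: the candidate differential $V(t)$ solves the linear integral equation $V(t) = {\rm Id} + \int_0^t D X_{{\mathcal F}_N}(u(\tau))[V(\tau)]\, d\tau$, which is solved by the same contraction on $[-T_\a, T_\a]$ because $D X_{{\mathcal F}_N}$ is bounded on $H^s(\T)$ by the trilinear estimate~\eqref{prop cal T rick frazionario}; a standard argument then upgrades the resulting directional derivative to a continuous Fr\'echet differential.

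As for the case $\a = 1$, the improvement $T_\a = 1$ comes from the sharper bound $\|X_{{\mathcal F}_N}(u)\|_{H^\sigma} \lesssim \|u\|_{L^2}^2 \|u\|_{H^\sigma}$ of Lemma~\ref{prop campo hamiltoniano X FN}$(i)$, which is \emph{linear} in the high norm, combined with the conservation of $\|u\|_{L^2}$ along the flow (every monomial in ${\mathcal F}_N$ carries two factors $u$ and two factors $\overline u$, so ${\mathcal F}_N$ is gauge invariant and Poisson-commutes with $\|u\|_{L^2}^2$). A Gronwall inequality then gives $\|u(t)\|_{H^\sigma} \leq e^{C \|u_0\|_{L^2}^2 |t|} \|u_0\|_{H^\sigma}$, which rules out blow-up in finite time and lets one reiterate the local construction up to any time, in particular $t = 1$; equivalently, the case $\a = 1$ is already covered by Lemma~\ref{LWPlemmaNLS}.

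I expect the lemma itself to be essentially routine given Lemma~\ref{prop campo hamiltoniano X FN frazionario rick}; the only genuinely delicate ingredient of the whole argument lives upstream in that lemma, where the hypothesis $\a > 3/4$ is exactly what makes $\sum_k \langle k \rangle^{-2(1-a)}$ converge for $a = 2 - 2\a$ (one needs $2(1-a) > 1$, i.e. $a < \tfrac12$). I would also stress that the time of existence $T_\a \simeq R^{-2}$ produced here is forced by the cubic nonlinearity together with the fact that, when $\a < 1$, Lemma~\ref{prop campo hamiltoniano X FN frazionario rick}$(i)$ is only \emph{quadratic} in $\|u\|_{H^s}$ with $s \geq 2 - 2\a > 0$; this quadratic dependence on a positive-order Sobolev norm, against a single conserved quantity ($\|u\|_{L^2}$), is precisely the obstruction to a purely deterministic globalisation of the Birkhoff flow for $\a < 1$.
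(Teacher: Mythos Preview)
Your proposal is correct and follows essentially the same route as the paper: a contraction mapping argument for the Volterra operator ${\mathcal S}(u)(t) = u_0 + \int_0^t X_{{\mathcal F}_N}(u(\tau))\,d\tau$ on a ball in $C^0([-T_\a,T_\a],H^s(\T))$, fed by the trilinear estimates of Lemma~\ref{prop campo hamiltoniano X FN frazionario rick}. The paper's own proof is a three-line sketch of precisely this; you have supplied more detail (the explicit mapping/contraction inequalities, the $C^1$ dependence on the datum via the linearised integral equation, and the globalisation mechanism for $\a=1$ through $L^2$ conservation and Gronwall), all of which is accurate and consistent with the paper's framework.
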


 \begin{proof}
 The local existence follows by a standard fixed point argument on the Volterra integral operator 
 $$
 u(t) \mapsto {\mathcal S}(u)(t) := u_0 + \int_0^t X_{{\mathcal F}_N}(u(\tau))\, d \tau 
 $$
 in the closed ball 
 $$
 \Big\{ u \in C^0([- T_\alpha, T_\alpha], H^s(\T)) : \sup_{|t|<T_\alpha}\| u \|_{H^s} 
 := \sup_{t \in [- T_\alpha, T_\alpha]}\| u(t) \|_{H^s} \leq R \Big\}\,. 
 $$
 This fixed point argument requires that $R$ is larger than $\| u_0 \|_{H^s}$ and $T R^2 = \mathtt c_\alpha$ with $\mathtt c_\alpha \ll 1$ small enough. If $u$ is a fixed point for ${\mathcal S}$ one also immediately gets that $u \in C^1([- T_\alpha, T_\alpha], H^s(\T))$.  
 \end{proof}
 
\begin{lemma}\label{InverseFlowStab}
Let $\alpha >3/4$, $s \geq  2 - 2 \alpha$ and $N \in \N \cup \{ \infty \}$. Assume that $\| w \|_{H^s}, \| v \|_{H^s} \leq R$.
It is
\begin{equation}
\label{FlowPolyTruncated}
\sup_{|t| < T_\alpha} \| \Phi^{N}_{t} (v) - \Phi^{N}_{t} (w)\|_{H^s} \lesssim_{R, T_{\alpha}, s} 
  \| v -  w \|_{H^s}\,. 
\end{equation}
\end{lemma}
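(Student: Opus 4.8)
The plan is to run a standard Gronwall argument on the difference of the two truncated flows, using the Lipschitz bound from Lemma \ref{prop campo hamiltoniano X FN frazionario rick}$(ii)$ and the a priori control $\| \Phi^N_t(v) \|_{H^s}, \| \Phi^N_t(w) \|_{H^s} \lesssim R$ for $|t| < T_\alpha$ coming from Lemma \ref{LWPlemma}. First I would write $v(t) := \Phi^N_t(v)$, $w(t) := \Phi^N_t(w)$, so that both solve $\partial_t v = X_{{\mathcal F}_N}(v)$, and set $\delta(t) := v(t) - w(t)$. Subtracting the two Duhamel formulas gives
\begin{equation*}
\delta(t) = (v - w) + \int_0^t \big( X_{{\mathcal F}_N}(v(\tau)) - X_{{\mathcal F}_N}(w(\tau)) \big)\, d\tau\,.
\end{equation*}
Taking $H^s$ norms and invoking Lemma \ref{prop campo hamiltoniano X FN frazionario rick}$(ii)$ together with the a priori bounds $\| v(\tau) \|_{H^s}, \| w(\tau) \|_{H^s} \lesssim R$ yields, for $|t| < T_\alpha$,
\begin{equation*}
\| \delta(t) \|_{H^s} \leq \| v - w \|_{H^s} + C R^2 \Big| \int_0^t \| \delta(\tau) \|_{H^s}\, d\tau \Big|\,.
\end{equation*}

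Then the Gronwall inequality gives $\| \delta(t) \|_{H^s} \leq \| v - w \|_{H^s}\, e^{C R^2 |t|} \leq \| v - w \|_{H^s}\, e^{C R^2 T_\alpha}$ for all $|t| < T_\alpha$; taking the supremum over $|t| < T_\alpha$ produces \eqref{FlowPolyTruncated} with an implicit constant depending on $R$, $T_\alpha$ and $s$. The only point requiring a word of care is that this reasoning is uniform in $N \in \N \cup \{\infty\}$: both the Lipschitz constant in Lemma \ref{prop campo hamiltoniano X FN frazionario rick}$(ii)$ and the a priori bound in Lemma \ref{LWPlemma} are $N$-independent, so no new smallness in $N$ is needed. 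I do not expect any genuine obstacle here — the lemma is an immediate consequence of the vector field estimates already established — the mild bookkeeping issue being just to check that the time interval $[-T_\alpha, T_\alpha]$ on which the a priori bound holds is the same one appearing in the statement, which it is by construction since $T_\alpha = T_\alpha(R)$ is chosen exactly to close the fixed point argument at amplitude $\simeq R$.
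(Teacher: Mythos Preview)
Your proposal is correct and matches the paper's approach exactly: the paper's proof is a one-line pointer to Lemma \ref{LWPlemma}, Lemma \ref{prop campo hamiltoniano X FN frazionario rick}, and the Duhamel representation, and you have simply spelled out the Gronwall argument that those ingredients yield. Nothing is missing and no alternative route is taken.
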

\begin{proof}
The argument easily follows using Lemma \ref{LWPlemma}, 
the inequalities of Lemma \eqref{prop campo hamiltoniano X FN frazionario rick} and the Duhamel representation of truncated flows.
\end{proof}

We now prove the convergence of the flow of the truncated vector field, to the one of the non-truncated vector field. 
The flow $\Phi_t \:= \Phi_t^{\mathcal F}$ is the flow of the Hamiltonian vector field $X_{\mathcal F}$ whereas the flow of $\Phi_N^t \:= \Phi_{{\mathcal F}_N}^t$ is the flow associated to the Hamiltonian vector field $X_{{\mathcal F}_N}$ where ${\mathcal F}_N(u) = {\mathcal F}(\Pi_N u)$. 

\begin{lemma}\label{stime differenze campi vettoriali sigma}
Let $\alpha > 3/4$, $ 2 - 2 \alpha  \leq s' < s $, $R > 1$ and $N \in \N \cup \{ \infty \}$. Then
\begin{equation}\label{GlobalApproxN}
\sup_{\| u_0 \|_{H^s} \leq R} \sup_{\tau \in [- T_\alpha, T_\alpha]}
 \| \Phi_\tau(u_0) -  \Phi_\tau^N (u_0) \|_{H^{s'}} \lesssim R N^{- (s - s')}
\end{equation}
\end{lemma}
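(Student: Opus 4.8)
The strategy is the standard Duhamel/bootstrap comparison between the full flow $\Phi_\tau$ and the truncated flow $\Phi_\tau^N$, keeping careful track of the frequency truncation loss. Write $\delta_N(\tau) := \Phi_\tau(u_0) - \Phi_\tau^N(u_0)$. By Lemma~\ref{LWPlemma}, for $\|u_0\|_{H^s}\le R$ and on the common existence interval $|\tau|\le T_\alpha$ both solutions stay in a ball of $H^s$ of radius $\lesssim R$; in particular their $H^{s'}$ norms are also $\lesssim R$ since $s'<s$. Using \eqref{DefTrunc1} and the decomposition in \eqref{X - XN}, one has
\begin{equation*}
\partial_\tau \delta_N = {\mathcal R}'_N(\Phi_\tau(u_0)) + {\mathcal R}_N(\Phi_\tau(u_0)), \qquad \delta_N(0) = 0,
\end{equation*}
where ${\mathcal R}'_N(u) := \Pi_N X_{{\mathcal F}}(u) - \Pi_N X_{{\mathcal F}}(\Pi_N u)$ and ${\mathcal R}_N(u) := \Pi_N^\bot X_{{\mathcal F}}(u)$ — except that here the left input of the comparison is $\Phi_\tau^N(u_0)$, so the correct identity is $\partial_\tau\delta_N = X_{{\mathcal F}}(\Phi_\tau(u_0)) - X_{{\mathcal F}_N}(\Phi_\tau^N(u_0))$, which we split as $\big(X_{{\mathcal F}}(\Phi_\tau(u_0)) - X_{{\mathcal F}}(\Pi_N\Phi_\tau(u_0))\big) + \big(X_{{\mathcal F}_N}(\Pi_N\Phi_\tau(u_0)) - X_{{\mathcal F}_N}(\Phi_\tau^N(u_0))\big) + \Pi_N^\bot X_{{\mathcal F}}(\Phi_\tau(u_0))$.

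\textbf{Step 1: the truncation remainders are small in $H^{s'}$.} The first and third terms involve $\Pi_N^\bot$ acting on a function that is bounded in $H^s$. For the third term, directly $\|\Pi_N^\bot X_{{\mathcal F}}(\Phi_\tau(u_0))\|_{H^{s'}} \le N^{-(s-s')} \|X_{{\mathcal F}}(\Phi_\tau(u_0))\|_{H^s} \lesssim N^{-(s-s')} \|\Phi_\tau(u_0)\|_{H^s}^2\|\Phi_\tau(u_0)\|_{L^2} \lesssim R^3 N^{-(s-s')}$ by Lemma~\ref{prop campo hamiltoniano X FN frazionario rick}$(i)$. For the first term, $\|X_{{\mathcal F}}(\Phi_\tau(u_0)) - X_{{\mathcal F}}(\Pi_N\Phi_\tau(u_0))\|_{H^{s'}}$: apply the Lipschitz estimate Lemma~\ref{prop campo hamiltoniano X FN frazionario rick}$(ii)$ at regularity $s'$ (legitimate since $s'\ge 2-2\alpha$) to get $\lesssim \|\Pi_N^\bot\Phi_\tau(u_0)\|_{H^{s'}}(\|\Phi_\tau(u_0)\|_{H^{s'}}^2 + \|\Pi_N\Phi_\tau(u_0)\|_{H^{s'}}^2) \lesssim R^2 N^{-(s-s')}\|\Phi_\tau(u_0)\|_{H^s} \lesssim R^3 N^{-(s-s')}$.

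\textbf{Step 2: absorb the Lipschitz term by Gronwall.} The second term is $\lesssim \|\Pi_N\Phi_\tau(u_0) - \Phi_\tau^N(u_0)\|_{H^{s'}}(\cdots) \le \|\delta_N(\tau)\|_{H^{s'}} + \|\Pi_N^\bot\Phi_\tau(u_0)\|_{H^{s'}}$ times $\lesssim R^2$, and the extra $\Pi_N^\bot$ piece is again $\lesssim R^3 N^{-(s-s')}$. Integrating and using $\delta_N(0)=0$,
\begin{equation*}
\|\delta_N(\tau)\|_{H^{s'}} \lesssim R^3\, T_\alpha\, N^{-(s-s')} + C R^2 \Big| \int_0^\tau \|\delta_N(z)\|_{H^{s'}}\, dz \Big|,
\end{equation*}
and Gronwall over $|\tau|\le T_\alpha$ (recall $T_\alpha R^2 = \mathtt{c}_\alpha \ll 1$, so the exponential factor $e^{CR^2 T_\alpha}$ is $O(1)$) yields $\sup_{|\tau|\le T_\alpha}\|\delta_N(\tau)\|_{H^{s'}} \lesssim_R N^{-(s-s')}$, which is \eqref{GlobalApproxN}.

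\textbf{Main obstacle.} The only subtle point is making sure the commutator-type remainder ${\mathcal R}'_N$ really gains the full power $N^{-(s-s')}$: one must apply the nonlinear estimates of Lemma~\ref{prop campo hamiltoniano X FN frazionario rick} at the \emph{lower} regularity $s'$ (so that $\Pi_N^\bot$ contributes the factor $N^{-(s-s')}\|\cdot\|_{H^s}$), and this requires $s' \ge 2-2\alpha$, which is exactly the hypothesis. One also needs the a priori $H^s$ bound $\|\Phi_\tau(u_0)\|_{H^s}\lesssim R$ on $[-T_\alpha,T_\alpha]$ for \emph{both} flows uniformly in $N$, which is guaranteed by Lemma~\ref{LWPlemma} since the local existence time there depends only on $\|u_0\|_{H^s}\le R$ and not on $N$. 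No other step presents difficulty; everything else is routine Duhamel/Gronwall bookkeeping.
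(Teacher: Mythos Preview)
Your proof is correct and follows essentially the same Duhamel/Lipschitz/smoothing strategy as the paper's own argument; the paper instead takes the truncated initial datum $\Pi_N u_0$ (so $v_N(0)=\Pi_N^\bot u_0$) and reabsorbs directly using $T_\alpha R^2=\mathtt c_\alpha\ll1$ rather than invoking Gronwall, but these differences are cosmetic. One small slip: your three-term splitting over-counts by the harmless piece $\Pi_N^\bot\big[X_{\mathcal F}(u)-X_{\mathcal F}(\Pi_N u)\big]$ (the third term should be $\Pi_N^\bot X_{\mathcal F}(\Pi_N u)$, or equivalently the first term should carry a $\Pi_N$), but this extra contribution obeys the same $R^3 N^{-(s-s')}$ bound and the conclusion is unaffected.
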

\begin{proof}
  Given $u_0 \in H^s(\T)$, we consider 
\begin{equation}\label{prob cauchy e prob troncato a}
\begin{cases}
\partial_t u = X_{\mathcal F}(u) \\
u(0) = u_0,
\end{cases} \qquad \begin{cases}
\partial_t u_N = X_{{\mathcal F}_N}(u_N) \\
u_N(0) = \Pi_N u_0.
\end{cases}
\end{equation}
If $\| u_0 \|_{H^s} \leq R$ then the corresponding solutions satisfy $$\sup_{\tau \in [- T_\alpha, T_\alpha]}\| u_N(\t) \|_{H^s}\,,\, \sup_{\tau \in [- T_\alpha, T_\alpha]}\| u(\t) \|_{H^s} \lesssim R\,.$$ 
By \eqref{X - XN}, one obtains that $v_N := u - u_N$ satisfies the following problem: 
\begin{equation}\label{problema U - UN a}
\begin{cases}
\partial_\tau v_N = {\mathcal A}(u, u_N) v_N + {\mathcal R}_N(u) \\
v_N(0) = \Pi_N^\bot u_0\,. 
\end{cases}
\end{equation}
This implies that 
$$
v_N(\tau) = \Pi_N^\bot u_0 + \int_0^\tau {\mathcal R}_N(u)(z)\, d z + \int_0^\tau {\mathcal A}(u, u_N)(z) v_N (z)\, d z\,.
$$
We have that $\| u \|_{L^\infty_\tau H^s_x}, \| u_N \|_{L^\infty_\tau H^s_x} \leq R$ and we need to estimate $\| v_N(t) \|_{H^{s'}}$ with $s' < s$. By applying Lemma \ref{prop campo hamiltoniano X FN frazionario rick} item $(ii)$, one obtains that for any $z \in [- T_\alpha, T_\alpha]$
$$
\| {\mathcal A}(u, u_N)(z)[\varphi] \|_{H^{s'}} \lesssim \Big(  \sup_{\tau \in [- T_\alpha, T_\alpha]}\| u(\t) \|_{H^{s'}} + \sup_{\tau \in [- T_\alpha, T_\alpha]}\| u_N(\t) \|_{H^{s'}} \Big)^2 \| \varphi \|_{H^{s'}} \lesssim R^2 \| \varphi \|_{H^{s'}}
$$
implying that $V_N(\tau)$ satisfies the integral inequality 
\begin{equation}\label{mfdklsdkngkrls}
\| v_N(\tau) \|_{H^{s'}} \leq \| \Pi_N^\bot u_0 \|_{H^{s'}} + \int_{- T_\alpha}^{T_\alpha} \| {\mathcal R}_N(u)(z) \|_{H^{s'}}\, d z + C R^2 \Big| \int_0^\tau \| v_N(z) \|_{H^{s'}}\, d z \Big| \,, \quad \tau \in [- T_\alpha, T_\alpha]\,. 
\end{equation}
We have (recall $T_{\alpha} = \frac{\mathtt c_\alpha}{R^2}$)
$$
\Big| \int_0^\tau \| v_N(z) \|_{H^{s'}}\, d z \Big| 
\leq  \frac{\mathtt c_\alpha}{R^2} \sup_{|\tau| \leq T_{\alpha}} \| v_N(z) \|_{H^{s'}}
$$
Plugging this into inequality \eqref{mfdklsdkngkrls} and then taking the
 sup over $\tau \in [- T_\alpha, T_\alpha]$ of the new inequality 
 we arrive to
$$
\sup_{|\tau| \leq T_{\alpha}} \| v_N(\tau) \|_{H^{s'}} \leq \| \Pi_N^\bot u_0 \|_{H^{s'}} + \int_{- T_\alpha}^{T_\alpha} \| {\mathcal R}_N(u)(z) \|_{H^{s'}}\, d z + C c_\alpha
\sup_{|\tau| \leq T_{\alpha}} \| v_N(z) \|_{H^{s'}} \,. 
$$
Since $c_\alpha \ll1$ we can reabsorb the last term on the right hand side into the left hand side and we arrive to
$$
\sup_{|\tau| \leq T_{\alpha}} \| v_N(\tau) \|_{H^{s'}} \lesssim \| \Pi_N^\bot u_0 \|_{H^{s'}} + \int_{- T_\alpha}^{T_\alpha} \| {\mathcal R}_N(u)(z) \|_{H^{s'}}\, d z  
$$
By standard smoothing properties, one has that 
$$
\| \Pi_N^\bot u_0 \|_{H^{s'}} \lesssim N^{- (s - s')} \| u_0 \|_{H^s} \lesssim R N^{- (s - s')} 
$$
and by using also Lemma \ref{prop campo hamiltoniano X FN frazionario rick}-$(i)$, one gets that 
$$
\begin{aligned}
\int_{- T_\alpha}^{T_\alpha} \| {\mathcal R}_N(u)(z) \|_{H^{s'}}\, d z & =  \int_{- T_\alpha}^{T_\alpha} \| \Pi_N^\bot X_{\mathcal F}(u(z)) \|_{H^{s'}}\, d z \lesssim N^{- (s - s')} \int_{- T_\alpha}^{T_\alpha} \| X_{\mathcal F}(u(z)) \|_{H^s}\, d z  \\
& \lesssim N^{- (s - s')} \int_{- T_\alpha}^{T_\alpha} \| u(z) \|_{H^s}^3\, d z \lesssim N^{- (s - s')} 
\frac{\mathtt c_\alpha}{R^2} R^3 \lesssim  c_\alpha R N^{- (s - s')}\,. 
\end{aligned}
$$
This implies
$$
\sup_{|\tau| \leq T_{\alpha}} \| v_N(\tau) \|_{H^{s'}} \lesssim R N^{- (s - s')}
$$
from which we deduce the statement.
\end{proof}

We need now an approximation result that allows to construct a flow $\Phi_t$ on $t \in [-1,1]$ 
once we have suitable 
estimates on the approximated flow $\Phi_t^N$ that are uniform over $N \in \N$.

\begin{proposition}\label{ApproxThm}
Let $\alpha >3/4$ and $2 - 2 \alpha  \leq s' < s $, $R>0$ and $\varepsilon > 0$. 
Let $A \subset B_s(R)$. 
There exists $N$ sufficiently large (depending on $\alpha, s, s',\varepsilon, R$) such that the following holds. If
\begin{equation}\label{GrowthAssumption}
\sup_{t \in [-1, 1]} \sup_{u_0 \in A} \| \Phi^N_{t} (u_0) \|_{H^{s}} \leq R , 
\end{equation}
then
the flow $\Phi_t(u_0)$ is well defined on $t \in [-1, 1]$ for all $u_0 \in A$. Moreover
\begin{equation}\label{approxProp} 
\sup_{t \in [-1, 1]} \| \Phi_t(u_0) - \Phi^N_{t} (u_0) \|_{H^{s'}} \leq \varepsilon, 
\quad \forall u_0 \in A\,.
\end{equation}
\end{proposition}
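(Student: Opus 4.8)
The plan is to run a bootstrap/continuity argument starting from the uniform bound \eqref{GrowthAssumption} on the truncated flow, combined with the approximation estimate of Lemma~\ref{stime differenze campi vettoriali sigma} iterated over a partition of $[-1,1]$ into $O(1)$ subintervals of length $T_\alpha = T_\alpha(2R) = \mathtt c_\alpha/(4R^2)$. Fix $u_0 \in A \subset B_s(R)$. The hypothesis \eqref{GrowthAssumption} controls $\|\Phi^N_t(u_0)\|_{H^s}$ by $R$ for all $t \in [-1,1]$; I want to transfer this to $\Phi_t(u_0)$ and to show closeness in the weaker norm $H^{s'}$, $s' < s$. Partition $[0,1]$ (and symmetrically $[-1,0]$) as $0 = t_0 < t_1 < \dots < t_M = 1$ with $t_{k+1} - t_k \leq T_\alpha$ and $M = \lceil 1/T_\alpha \rceil = O(R^2)$. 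On each subinterval $[t_k, t_{k+1}]$, Lemma~\ref{LWPlemma} guarantees that the full flow $\Phi_t$ is defined and stays bounded (by $\lesssim$ the $H^s$ norm of the datum at time $t_k$) provided that datum has $H^s$ norm $\leq 2R$, say, and Lemma~\ref{stime differenze campi vettoriali sigma} gives the single-step error $\sup_{|\tau|\leq T_\alpha}\|\Phi_\tau(w_0) - \Phi^N_\tau(w_0)\|_{H^{s'}} \lesssim R N^{-(s-s')}$ for data $w_0$ with $\|w_0\|_{H^s} \leq 2R$.

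First I would set up the inductive statement: for each $k = 0,\dots,M$, the full flow is defined up to time $t_k$, $\|\Phi_{t_k}(u_0)\|_{H^s} \leq 2R$, and $\|\Phi_{t_k}(u_0) - \Phi^N_{t_k}(u_0)\|_{H^{s'}} \leq k \cdot C_0 R N^{-(s-s')} e^{C_0 k}$ (the exponential factor absorbing the accumulation of errors through the Lipschitz dependence of the flow on its initial datum, via Lemma~\ref{InverseFlowStab}). The base case $k=0$ is trivial. For the inductive step, assume the statement at level $k$. Apply Lemma~\ref{stime differenze campi vettoriali sigma} with datum $\Phi^N_{t_k}(u_0)$ (which has $H^s$ norm $\leq R < 2R$ by \eqref{GrowthAssumption}) to get that the full flow of this datum over $[t_k, t_{k+1}]$ stays within $N^{-(s-s')}$ of $\Phi^N$ in $H^{s'}$; then use the Lipschitz stability estimate \eqref{FlowPolyTruncated} of Lemma~\ref{InverseFlowStab} applied to the two data $\Phi_{t_k}(u_0)$ and $\Phi^N_{t_k}(u_0)$ to propagate the level-$k$ error, picking up a factor $\lesssim_{R,T_\alpha,s} e^{C_0}$. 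Adding these contributions gives the level-$(k+1)$ error bound; and since $s' < s$ with $s$ the same throughout, the $H^s$ norm of $\Phi_{t_{k+1}}(u_0)$ is controlled: $\|\Phi_{t_{k+1}}(u_0)\|_{H^s} \leq \|\Phi^N_{t_{k+1}}(u_0)\|_{H^s} + \|\Phi_{t_{k+1}}(u_0) - \Phi^N_{t_{k+1}}(u_0)\|_{H^s}$, where the first term is $\leq R$ and the second — bounded using persistence of regularity (exponential $H^s$ growth on a single step from Lemma~\ref{LWPlemmaNLS}, or the fractional analogue) — is $\leq R$ once $N$ is large, keeping everything under $2R$ and closing the induction.

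At the end one has, at $k = M$, the bound $\|\Phi_{t_M}(u_0) - \Phi^N_{t_M}(u_0)\|_{H^{s'}} \leq M C_0 R N^{-(s-s')} e^{C_0 M} = C(R,\alpha,s,s') N^{-(s-s')}$, since $M = O(R^2)$ depends only on $R$ and $\alpha$. Choosing $N$ large enough (depending on $\alpha, s, s', \varepsilon, R$) makes this $\leq \varepsilon$; the same argument at every intermediate $t \in [t_k, t_{k+1}]$ (not just endpoints) gives \eqref{approxProp} uniformly in $t \in [-1,1]$ and $u_0 \in A$, and along the way the full flow $\Phi_t(u_0)$ has been shown to exist on $[-1,1]$. \textbf{The main obstacle} is the bootstrap consistency of the two different norms: the $H^s$-boundedness needed to re-apply Lemmas~\ref{LWPlemma} and~\ref{stime differenze campi vettoriali sigma} at each step must be re-derived, and it cannot come directly from \eqref{GrowthAssumption} (which bounds only the \emph{truncated} flow). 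The resolution is that the error $\Phi_{t_k}(u_0) - \Phi^N_{t_k}(u_0)$, while only $O(N^{-(s-s')})$-small in $H^{s'}$, is still \emph{bounded} in $H^s$ (by the persistence-of-regularity estimate, with a constant that grows in $k$ but is finite for $N$ fixed and large) — and "bounded in $H^s$" is all that is needed to keep feeding the machine, since the smallness we actually want to propagate lives in $H^{s'}$. Care is needed that the $H^s$-bound does not degrade the final $N^{-(s-s')}$ rate; it does not, because it enters only through the requirement "$N$ large enough", not through the error constant.
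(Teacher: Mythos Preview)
Your overall strategy---partitioning $[-1,1]$ into $O(R^2)$ subintervals of length $T_\alpha(2R)$ and propagating the approximation error step by step via a Lipschitz/approximation telescoping---is precisely the paper's. But the bootstrap you write down does not close, and the flaw sits exactly at your ``main obstacle''.

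You carry as inductive hypothesis that $\|\Phi_{t_k}(u_0)\|_{H^s} \leq 2R$ and attempt to recover it at step $k+1$ by asserting that $\|\Phi_{t_{k+1}}(u_0) - \Phi^N_{t_{k+1}}(u_0)\|_{H^s} \leq R$ ``once $N$ is large''. This is false: Lemma~\ref{stime differenze campi vettoriali sigma} yields smallness of the difference only in $H^{s'}$ for $s' < s$; at the top level the gain $N^{-(s-s')}$ is absent. Persistence of regularity merely bounds the $H^s$-difference by a constant (compounding multiplicatively in $k$) times $R$, and this does \emph{not} shrink with $N$. Hence either $\|\Phi_{t_k}(u_0)\|_{H^s}$ grows like $C^k R$, destroying the fixed partition length, or the $H^s$ hypothesis must be dropped. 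Your resolution (``bounded in $H^s$ is all that is needed'') is inconsistent with the $\leq 2R$ bound you are simultaneously claiming to propagate. The paper's repair is to introduce a strictly decreasing chain $s = s_0 > s_1 > \cdots > s_J > s'$ of Sobolev indices, one per step: at step $j$ one only maintains $\|\Phi_{t_j}(u_0)\|_{H^{s_j}} \leq 2R$, which follows from \eqref{GrowthAssumption} together with the inductive smallness of the difference in $H^{s_j}$ (available precisely because $s_j < s_{j-1}$). Since $J$ depends only on $R$ and $\alpha$, there is room in $(s',s)$ for the $J$ intermediate exponents. An even simpler fix---closer in spirit to your sketch---is to run the whole induction at the fixed level $s'$: maintain $\|\Phi_{t_k}(u_0)\|_{H^{s'}} \leq 2R$, which \emph{does} close because the $H^{s'}$-difference is $O(N^{-(s-s')})$ at every step, and LWP and Lipschitz stability at level $s' \geq 2-2\alpha$ are all that the machine requires.
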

\begin{remark}\label{remark flusso troncato}
The assumption \eqref{GrowthAssumption} on the truncated flow $\Phi_t^N$ will be verified in Proposition \ref{prop:alabourgain}. 
\end{remark}
\begin{proof}
Recall that $\mathtt{c}_{\alpha}=T_\a R\ll1$, where $T_\a$ is the local existence time of $\Phi_t$ (see Lemma \ref{LWPlemma}).
Let $J$ be the smallest integer such that $J \frac{\mathtt{c}_{\alpha} }{2(R^2+1)}  \geq 1$.
We have $$\frac{2(R^2+1)}{\mathtt{c}_{\alpha}}  \leq J < \frac{2(R^2+1)}{\mathtt{c}_{\alpha}} + 1.$$  
We partition the interval $[0,1]$ into $J-1$ intervals of length $\frac{\mathtt{c}_{\alpha}}{2(R^2+1)}$ and a 
last, possibly smaller interval. 
We will compare the approximated flow $\Phi^N_{t}$ 
and $\Phi_{t}$ (which exists only for small times) on these small intervals and we will glue the local solutions.
Let  
$$s'  < s_{J} < \ldots <  s_2 < s_1 < s_0=s\,.$$
We proceed by induction over $j = 0, \dots, J$. Assuming that  $\Phi_{t}$ is well defined 
on $\big[0, (j+1)\frac{\mathtt{c}_{\alpha}}{2(R^2+1)} \big]$  and that
\begin{equation}\label{Growth1}
\sup_{t \in \big[0, j\frac{\mathtt{c}_{\alpha}}{2(R^2+1)} \big]} 
\|  \Phi_t(u_0) -  \Phi_t^N(u_0)   \|_{H^{s_{j}}} \leq  N^{-\kappa_j} \, ,
\end{equation}
for some $\kappa_j >0$, we will show that 
$\Phi_t(u_0)$ is well defined on $\big[0, (j+2)\frac{\mathtt{c}_{\alpha}}{2(R^2+1)}\big]$  and that
\begin{equation}\label{Growth2}
\sup_{t \in \big[0, (j+1) \frac{\mathtt{c}_{\alpha}}{2(R^2+1)}\big]} 
\|  \Phi_t(u_0) -  \Phi_t^N(u_0) \|_{H^{s_{j+1}}} \leq   N^{-\kappa_{j+1}} \, ,
\end{equation}
for a suitable $\k_{j+1} >0$, 
provided $N$ is sufficiently large. In particular, we take $N$ so large in such away that we also have 
$N^{-\kappa_{j+1}} < \varepsilon$. 
Using the induction procedure up to~$j = J$, the statement would then follows.

The induction base $j=0$ is covered by Lemma \ref{LWPlemma} and Lemma \ref{stime differenze campi vettoriali sigma} and by 
the fact that $A \subset B_{s}(R)$.

Regarding the induction step, we first prove \eqref{Growth2}. Then using the
 assumption \eqref{GrowthAssumption} and the triangle 
inequality we get
\begin{equation}\label{EqBFinStable}
\sup_{t \in \left[0, (j+1) \frac{\mathtt{c}_{\alpha}}{2(R^2+1)}\right]} 
\|  \Phi_t(u_0) \|_{H^{s_{j+1}}} \leq  R +  N^{-\kappa_{j+1}} < 2 R.
\end{equation}
By \eqref{EqBFinStable} we then use Lemma \ref{LWPlemma} (with $2R$ in place of $R$) to show that
$\Phi_t(u_0)$ is well defined on $\big[0, (j+2)\frac{\mathtt{c}_{\alpha}}{2(R^2+1)}\big]$.

Now we show \eqref{Growth2}. If the $\sup$ in \eqref{Growth2} is attained
for~$t \in \big[0, j \frac{\mathtt{c}_{\alpha}}{2(R^2+1)} \big]$, then \eqref{Growth2} follows 
by \eqref{Growth1} simply taking $\kappa_{j+1} = \kappa_{j}$.
On the other hand, if the $\sup$ is attained 
for $t \in \big[ j \frac{\mathtt{c}_{\alpha}}{2(R^2+1)} , (j+1) \frac{\mathtt{c}_{\alpha}}{2(R^2+1)} \big]$, using the 
group property of the flow, we need to prove 
\begin{equation}\label{Growth3}
\sup_{t \in \big[j \frac{\mathtt{c}_{\alpha}}{2(R^2+1)},  (j+1)\frac{\mathtt{c}_{\alpha}}{2(R^2+1)} \big]} 
\|  \Phi_{t}  \Phi_{ j \frac{\mathtt{c}_{\alpha}}{2(R^2+1)} } (u_0) -  \Phi_{t}^N   \Phi_{ j \frac{\mathtt{c}_{\alpha}}{2(R^2+1)}}^N (u_0) \|_{H^{s_{j+1}}} 
\leq   N^{-\kappa_{j+1}} \, .
\end{equation}
To do so we decompose 
\begin{align}\nonumber
\|  \Phi_{t}  \Phi_{j \frac{\mathtt{c}_{\alpha}}{2(R^2+1)} } (u_0) & 
-  \Phi_{t}^N  \Phi_{j \frac{\mathtt{c}_{\alpha}}{2(R^2+1)}}^N  (u_0) \|_{H^{s_{j+1}}}
\\ \label{Term1}
& 
\leq \|  \Phi_{t}  \Phi_{j \frac{\mathtt{c}_{\alpha}}{2(R^2+1)}} (u_0) -  \Phi_{t}  \Phi_{j \frac{\mathtt{c}_{\alpha}}{2(R^2+1)}}^N (u_0) \|_{H^{s_{j+1}}}
\\ \label{Term2}
& + \|  \Phi_{t}  \Phi_{j \frac{\mathtt{c}_{\alpha}}{2(R^2+1)}}^N (u_0) -  \Phi_{t}^N  \Phi_{j \frac{\mathtt{c}_{\alpha}}{2(R^2+1)}}^N  (u_0) \|_{H^{s_{j+1}}}
\end{align}
and we will handle these two terms separately.

To bound \eqref{Term1} we first note that by the induction assumption \eqref{Growth1} and by the assumption \eqref{GrowthAssumption} we have
for $N$ large enough
$$ 
\|  \Phi_{j \frac{\mathtt{c}_{\alpha}}{2(R^2+1)}} (u_0) \|_{H^{s_j}} \leq R + N^{-\kappa_{j}} < R+\varepsilon.
$$
Using this fact, the assumption \eqref{GrowthAssumption} and the fact the stability estimate \eqref{FlowPolyTruncated}  
is time-translation invariant, we apply \eqref{FlowPolyTruncated} to get
\begin{align}\label{eq:rhs-of}
& \sup_{t \in \big[j \frac{\mathtt{c}_{\alpha}}{2(R^2+1)},  (j+1)\frac{\mathtt{c}_{\alpha}}{2(R^2+1)} \big] }
\|  \Phi_{t}  \Phi_{j\frac{\mathtt{c}_{\alpha}}{2(R^2+1)} }(u_0) -  \Phi_{t}  \Phi_{j \frac{\mathtt{c}_{\alpha}}{2(R^2+1)} }^N (u_0) \|_{H^{s_{j+1}}}
\\ &
\lesssim_{R + \varepsilon, s}    
\|   \Phi_{j \frac{\mathtt{c}_{\alpha}}{2(R^2+1)} } (u_0) -   \Phi_{j \frac{\mathtt{c}_{\alpha}}{2(R^2+1)} }^N (u_0) \|_{H^{s_{j+1}}} \leq 
N^{-\kappa_{j+1}}
\,,
\end{align}
where in the last step we used the induction assumption \eqref{Growth1}, where $0>\kappa_{j+1}> s_{j+1} - s_j$ and $N$ sufficiently large. 

To bound the term \eqref{Term2}
we use the stability estimate \eqref{stime differenze campi vettoriali} 
and initial datum 
$\Phi_{j \frac{\mathtt{c}_{\alpha}}{2(R^2+1)} }^N (u_0)$,
that is allowed recalling the assumption \eqref{GrowthAssumption}. Thus for $\kappa_{j+1}$
as above we arrive to 
$$
\sup_{t \in [ j \frac{\mathtt{c}_{\alpha}}{2(R^2+1)},  (j+1)\frac{\mathtt{c}_{\alpha}}{2(R^2+1)} ]} 
\|  \Phi_{t}  \Phi_{j \frac{\mathtt{c}_{\alpha}}{2(R^2+1)}}^N (u_0)
-  \Phi_{t}^N  \Phi_{j \frac{\mathtt{c}_{\alpha}}{2(R^2+1)} }^N (u_0) \|_{H^{s_{j+1}}}
\lesssim_R  N^{s_{j+1}- s_j} < \frac12 N^{-\kappa_{j+1}},
$$
provided that $N$ is sufficiently large. This concludes the proof.
\end{proof}

Recall that we are abbreviating $\Phi \:= (\Phi_t)\big|_{t=1}$ and $\Phi^N \:= (\Phi_t^N)\big|_{t=1}$. 

\begin{lemma}\label{lemma conditional stability}
Let $\alpha > 3/4$ and $ s \geq 2- 2\alpha$, $R>0$ and $\varepsilon > 0$. 
Let $A \subset {B}_s(R)$. 
Assume that 
\begin{equation}\label{GrowthAssumptionB1}
\sup_{N \in \N}\sup_{t \in [-1, 1]} \sup_{u_0 \in A} \| \Phi^N_{t} (u_0) \|_{H^s} \leq R. 
\end{equation}
Then if $u_0, v_0 \in A$, then 
$$
\| \Phi^N_{t} (u_0) - \Phi^N_{t} (v_0)  \|_{H^s} \lesssim_R \| u_0 - v_0 \|_{H^s}, \qquad t \in [-1, 1].
$$
\end{lemma}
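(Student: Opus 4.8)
\emph{Plan of proof.} The point is that Lemma~\ref{InverseFlowStab} already provides exactly this Lipschitz-type bound, but only on the short time interval $|t| < T_\alpha = \mathtt{c}_\alpha/R^2$. The plan is therefore to iterate that short-time estimate over a partition of $[-1,1]$ into $O(1/T_\alpha)$ subintervals of length $< T_\alpha$, using the group property of the autonomous flow $\Phi_t^N$ and, crucially, the uniform-in-$N$ orbit bound \eqref{GrowthAssumptionB1} to guarantee that at every step of the iteration the two data fed into Lemma~\ref{InverseFlowStab} still lie in $B_s(R)$, so that the constant produced by that lemma is the same at each step.

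First I would recall $T_\alpha = T_\alpha(R) = \mathtt{c}_\alpha/R^2$ from Lemma~\ref{LWPlemma}, let $C = C(R,s) \geq 1$ denote the implicit constant in the conclusion of Lemma~\ref{InverseFlowStab} associated to the radius $R$ (note that its dependence on $T_\alpha$ is a dependence on $R$), and let $J$ be the smallest integer with $J \geq 2/T_\alpha$; set $t_j := j/J$ for $j = 0,\dots,J$, so that $t_{j+1}-t_j = 1/J < T_\alpha$ and $J$ depends only on $R$ (and on the fixed $\alpha$). Then I would prove by induction on $j$ that
\be\label{stab cond induttiva}
\sup_{t\in[0,t_j]} \| \Phi_t^N(u_0) - \Phi_t^N(v_0) \|_{H^s} \leq C^{\,j}\, \| u_0 - v_0 \|_{H^s}, \qquad u_0,v_0 \in A,\quad N \in \N\cup\{\infty\}\,.
\ee
The base case $j=0$ is trivial. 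For the inductive step, for $t \in [t_j,t_{j+1}]$ write $\Phi_t^N = \Phi_{t-t_j}^N \circ \Phi_{t_j}^N$ (group property of the autonomous flow); by \eqref{GrowthAssumptionB1} the functions $\Phi_{t_j}^N(u_0)$ and $\Phi_{t_j}^N(v_0)$ have $H^s$-norm at most $R$, and $t-t_j < T_\alpha$, so Lemma~\ref{InverseFlowStab} (which, being a statement about the autonomous flow, is invariant under time translation) gives
$$
\| \Phi_t^N(u_0) - \Phi_t^N(v_0) \|_{H^s} \leq C\, \| \Phi_{t_j}^N(u_0) - \Phi_{t_j}^N(v_0) \|_{H^s} \leq C^{\,j+1}\, \| u_0 - v_0 \|_{H^s}\,,
$$
the last step by the induction hypothesis; combining this with the induction hypothesis on $[0,t_j]$ yields \eqref{stab cond induttiva} for $j+1$.

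Taking $j=J$ in \eqref{stab cond induttiva} gives $\sup_{t\in[0,1]} \| \Phi_t^N(u_0) - \Phi_t^N(v_0) \|_{H^s} \leq C^{\,J}\, \| u_0 - v_0 \|_{H^s}$, and since both $C$ and $J$ depend only on $R$ (and on the fixed $s,\alpha$) we have $C^{\,J} \lesssim_R 1$. The interval $[-1,0]$ is handled identically: Lemma~\ref{InverseFlowStab} already covers $|t|<T_\alpha$ and the flow is reversible, so the same partition-and-iterate argument applies. This completes the proof. The only genuinely delicate bookkeeping point — and the step I expect to be the crux — is that \eqref{GrowthAssumptionB1}, the uniform-over-$N$ control of the whole orbit $\{\Phi_t^N(u_0): t\in[-1,1]\}$ in $H^s$, is exactly what prevents the radius, and hence the constant in Lemma~\ref{InverseFlowStab}, from growing along the iteration; without it the product of the per-step constants over the $\sim 1/T_\alpha$ steps would not remain bounded by a function of $R$ alone.
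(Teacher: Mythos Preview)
Your proof is correct. You iterate the short-time stability result (Lemma~\ref{InverseFlowStab}) over a partition of $[-1,1]$ into $\sim R^2/\mathtt c_\alpha$ subintervals, using the group property and the uniform orbit bound \eqref{GrowthAssumptionB1} to keep the per-step constant fixed; the product $C^J$ then depends only on $R$.

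The paper's own proof takes a more direct route: it writes the Duhamel representation
$$
\Phi^N_t(u_0) - \Phi^N_t(v_0) = u_0 - v_0 + \int_0^t \Big( X_{{\mathcal F}_N}(\Phi^N_\tau(u_0)) - X_{{\mathcal F}_N}(\Phi^N_\tau(v_0)) \Big)\, d\tau
$$
on the whole interval $[-1,1]$, bounds the integrand via the vector-field Lipschitz estimate of Lemma~\ref{prop campo hamiltoniano X FN frazionario rick}$(ii)$ together with \eqref{GrowthAssumptionB1} (which supplies the uniform factor $R^2$), and then applies Gr\"onwall in one stroke. Your iteration is effectively a discrete Gr\"onwall built on top of the already-packaged short-time lemma; the paper instead unwraps that lemma and runs the continuous Gr\"onwall argument directly. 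Both are valid and rely on the same two ingredients---the Lipschitz bound on $X_{{\mathcal F}_N}$ and the a priori orbit control---but the paper's version avoids the bookkeeping of the partition and makes the dependence of the final constant on $R$ (namely $e^{CR^2}$) explicit.
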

\begin{proof}
By the Duhamel representation of the solution one has
$$
\Phi^N_{t} (u_0)  - \Phi^N_{t} (v_0) = u_0 - v_0 + \int_{0}^{t} \delta_N(\tau) \, d \tau, \quad \delta_N(\tau) :=  X_{{\mathcal F}_N}(\Phi^N_{\tau} (u_0)) - X_{{\mathcal F}_N}(\Phi^N_{\tau} (v_0) ), \quad \tau \in [- 1, 1]\,. 
$$ 
By the estimates of Lemma \ref{prop campo hamiltoniano X FN frazionario rick} and by using the assumption \eqref{GrowthAssumptionB1}, we get 
$$
\| \delta_N(\tau) \|_{H^s} \lesssim_s R^2 \| \Phi_\tau^N (u_0) - \Phi_\tau^N (v_0) \|_{H^s}, \quad \forall \tau \in [- 1, 1],
$$
and hence 
$$
\| \Phi^N_{t} (u_0) - \Phi^N_{t} (v_0) \|_{H^s} \leq \| u_0 - v_0 \|_{H^s} + C R^2 \Big| \int_{0}^{t} \| \Phi_\tau^N (u_0) - \Phi_\tau^N (v_0) \|_{H^s} \, d \tau \Big|\,.
$$
This implies the claimed bound by using the Gr\"onwall inequality. 

\end{proof}

\begin{lemma}\label{lemma:recall}
Let $\alpha > 3/4$, $2 - 2 \alpha \leq s' < s $, $R>0$ and $\varepsilon > 0$. 
Let $A \subset {B}_s(R)$. 
Assume that 
\begin{equation}\label{GrowthAssumptionB}
\sup_{N \in N} \sup_{t \in [-1, 1]} \sup_{u_0 \in A} \| \Phi^N_{t} (u_0) \|_{H^{s}} \leq R , 
\end{equation}

Let $E \subset A$ be a compact set in the $H^s(\T)$ topology. Then, for 
all $\varepsilon >0$, $2 - 2 \alpha  \leq s' < s$ there exists $N_{\varepsilon}$
sufficiently large, so that
\begin{equation}\label{FlowStability}
\Phi(E) \subset 
\Phi^{N} (E +  B_{s'}(\varepsilon)), \qquad \forall N>N_{\varepsilon '}.
\end{equation}
\end{lemma}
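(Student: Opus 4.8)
The plan is to mimic exactly the argument of Lemma~\ref{lemma:recall0} (the $\alpha = 1$ case), replacing the $L^2$-based convergence statement by the conditional $H^{s'}$-approximation of Proposition~\ref{ApproxThm} and the $L^2$-stability of Lemma~\ref{InverseFlowStab} by the conditional $H^s$-stability of Lemma~\ref{lemma conditional stability}. Concretely, set $\Psi_N(v) := (\Phi^N)^{-1}\Phi(v)$. As in the proof of Lemma~\ref{lemma:recall0}, it suffices to show that for every $v \in E$ and every $\varepsilon > 0$ there is $N_\varepsilon$ so large that $\| v - \Psi_N(v)\|_{H^{s'}} < \varepsilon$ for all $N > N_\varepsilon$; for then $\Psi_N(v) \in v + B_{s'}(\varepsilon) \subset E + B_{s'}(\varepsilon)$, whence $\Phi(v) = \Phi^N \Psi_N(v) \in \Phi^N(E + B_{s'}(\varepsilon))$, giving \eqref{FlowStability}.

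To prove the pointwise bound, write
$$
\| v - \Psi_N(v)\|_{H^{s'}} = \big\| (\Phi^N)^{-1}\Phi^N(v) - (\Phi^N)^{-1}\Phi(v) \big\|_{H^{s'}}\,.
$$
Here one must be slightly careful: the stability estimate of Lemma~\ref{lemma conditional stability} is stated in $H^s$ under the uniform growth hypothesis \eqref{GrowthAssumptionB}, and the inverse flow $(\Phi^N)^{-1}$ is the time-$(-1)$ flow, so by time-reversibility the same Lipschitz bound holds for it on $H^s$. However, we wish to measure the difference in the lower norm $H^{s'}$. Since the energies $\| \Phi^N(v)\|_{H^s}$ and $\| \Phi(v)\|_{H^s}$ are both $\leq R$ (by \eqref{GrowthAssumptionB} for the former, and, once $\Phi(v)$ is shown to exist on $[-1,1]$ via Proposition~\ref{ApproxThm}, bounded by $R + \varepsilon < 2R$ for the latter), we may run the Gr\"onwall argument of Lemma~\ref{lemma conditional stability} with $H^{s'}$-norms throughout — the vector-field estimate of Lemma~\ref{prop campo hamiltoniano X FN frazionario rick}$(ii)$ applies at any regularity $s' \geq 2-2\alpha$ — so that
$$
\| v - \Psi_N(v)\|_{H^{s'}} \lesssim_{R} \big\| \Phi^N(v) - \Phi(v) \big\|_{H^{s'}}\,.
$$
Then Proposition~\ref{ApproxThm}, applied with the pair $s' < s$ and the set $A := E$ (which is $\subset B_s(R)$), and whose hypothesis \eqref{GrowthAssumption} is precisely \eqref{GrowthAssumptionB}, yields $\sup_{t\in[-1,1]}\| \Phi_t(v) - \Phi^N_t(v)\|_{H^{s'}} \leq \varepsilon'$ for $N$ large; in particular $\| \Phi^N(v) - \Phi(v)\|_{H^{s'}} \leq \varepsilon'$, and choosing $\varepsilon'$ small relative to $\varepsilon$ finishes the pointwise estimate. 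Finally, to pass from a pointwise $N_\varepsilon = N_\varepsilon(v)$ to one uniform over $E$, observe that $E$ is $H^s$-compact and that $v \mapsto \Phi(v) - \Phi^N(v)$ is continuous in $H^{s'}$ on $E$ (by Lemma~\ref{lemma conditional stability} and Lemma~\ref{InverseFlowStab}, arguing as in the proof of Lemma~\ref{stime differenze campi vettoriali}); the bounds in Proposition~\ref{ApproxThm} being uniform over $A = E$ already, the choice of $N_\varepsilon$ can be taken independent of $v \in E$.

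The main obstacle I expect is the bookkeeping around \emph{which} regularity index the various auxiliary estimates are phrased in, and ensuring the chain $(\Phi^N)^{-1}\Phi^N(v) - (\Phi^N)^{-1}\Phi(v)$ can legitimately be estimated in $H^{s'}$ rather than $H^s$. This requires checking that both arguments $\Phi^N(v)$ and $\Phi(v)$ have $H^s$-norm under control (so that Lemma~\ref{prop campo hamiltoniano X FN frazionario rick} gives the right Lipschitz constants), which in turn is exactly why the hypothesis \eqref{GrowthAssumptionB} is imposed and why one first invokes Proposition~\ref{ApproxThm} to guarantee that $\Phi(v)$ is even defined on $[-1,1]$. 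Everything else — the Gr\"onwall/Duhamel loop, time-reversibility for the inverse flow, and the compactness/Dini-type promotion to uniformity — is routine and already carried out in the $\alpha=1$ analogue.
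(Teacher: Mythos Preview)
Your proposal is correct and follows essentially the same route as the paper: define $\Psi_N(v) := (\Phi^N)^{-1}\Phi(v)$, reduce to $\|v - \Psi_N(v)\|_{H^{s'}} < \varepsilon$, use Lemma~\ref{lemma conditional stability} (at regularity $s'$, justified via Lemma~\ref{prop campo hamiltoniano X FN frazionario rick}(ii) and the $H^s$ control from \eqref{GrowthAssumptionB}) to pass to $\|\Phi^N(v) - \Phi(v)\|_{H^{s'}}$, and conclude by Proposition~\ref{ApproxThm}. Your extra discussion of the $H^s$/$H^{s'}$ bookkeeping and the uniformity over $E$ is more careful than the paper's presentation --- in fact the uniformity over $v\in E$ is already contained in the conclusion of Proposition~\ref{ApproxThm}, so no Dini-type argument is needed.
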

\begin{proof}
Set
$$
\Psi_N v := (\Phi^N)^{-1} \Phi (v)\,.
$$
Then \eqref{FlowStability}
follows once we prove for all $v \in E$ and all $\e>0$ there is $N_\e$ large enough such that
\begin{equation}\label{laterBettExpl}
\| v - \Psi_N (v)\|_{H^{s'}} < \varepsilon\,\quad\text{for all } N>N_\e.
\end{equation}
Indeed, assuming \eqref{laterBettExpl} we have for any $v\in E$
$$
\Psi_N (v)\in v+B_{s'}(\e)\subset E +B_{s'}(\e)
$$
thus
$$
\Phi (v) = \Phi^{N} \Psi_N (v) \in \Phi^{N}(v+B_{s'}(\e))\,,
$$
which readily implies \eqref{FlowStability}. The proof of \eqref{laterBettExpl}:
\bea
\| v - \Psi_N (v)\|_{H^{s'}}&=& \|(\Phi^N)^{-1} \Phi^N  (v) - (\Phi^N)^{-1} \Phi  (v) \|_{H^{s'}}\nn\\
&\stackrel{\text{by Lemma \ref{lemma conditional stability}}}{\lesssim_R}& \|\Phi^N  (v) - \Phi  (v)\|_{H^{s'}}\nn\,,
\eea
 Finally, by Proposition \ref{ApproxThm} for all $\e>0$ there is a $N_\e$ such that 
$$
\|\Phi^N (v) - \Phi  (v) \|_{H^{s'}}\leq \e\,,
$$
 \eqref{laterBettExpl} is proved and the proof is concluded. 
\end{proof}


\section{Main probabilistic estimates}\label{section:prob}

The main result of this section is the following. 

\begin{proposition}\label{prop:Lpbound}
Let $1 \geq \alpha >  \frac{1+ \sqrt{97}}{12} \sim 0.9$ and set
\be\label{eq:zeta}
\zeta(\a):=\min\left(\frac13+\frac{2\a-1}{3(1-\a)}, \frac{2 \a (\a +1)}{4 - 4\a^2 + 3\a} \right)>1\,.
\ee
For all $p\geq1$ it holds
\be\label{eq:Lpestimate}
\left\|\frac{d}{dt} H[\Phi^N_t(u)]\big|_{t=0}\right\|_{L^p(\tilde\g_{\alpha})}\lesssim_R p^{\frac{1}{\zeta(\a)}}\,.
\ee
\end{proposition}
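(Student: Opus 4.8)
### Proof Plan for Proposition \ref{prop:Lpbound}

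\textbf{Strategy.} The key computation is to use the homological equation from Lemma \ref{lemma equazione omologica}. By construction, the finite-dimensional Birkhoff Hamiltonian $\mathcal F_N$ was chosen precisely so that $\{ \| |D_x|^\alpha \Pi_N u\|_{L^2}^2, \mathcal F_N\}$ cancels the non-resonant part of $\frac{\sigma}{2}\|\Pi_N u\|_{L^4}^4$, leaving only the (simple) resonant term $\sigma \|\Pi_N u\|_{L^2}^4$. Concretely, $\frac{d}{dt} H[\Phi_t^N(u)]\big|_{t=0} = \{H^{(\alpha)}_N, \mathcal F_N\}(u)$, and since the quadratic part of $H^{(\alpha)}_N$ is exactly $\||D_x|^\alpha \Pi_N u\|_{L^2}^2$, Lemma \ref{lemma equazione omologica} gives
$$
\frac{d}{dt} H[\Phi^N_t(u)]\big|_{t=0} = \sigma \|\Pi_N u\|_{L^2}^4 - \frac{\sigma}{2}\|\Pi_N u\|_{L^4}^4 + \frac{\sigma}{2}\{\|\Pi_N u\|_{L^4}^4, \mathcal F_N\}(u)\,,
$$
so that the object to estimate is, modulo the harmless quartic terms, the sextic Poisson bracket $\{\|\Pi_N u\|_{L^4}^4, \mathcal F_N\}(u)$. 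Thus I would first reduce the $L^p$ bound to estimating a degree-six polynomial in the Gaussian coefficients $\{g_n/\langle n\rangle^\alpha_{2\alpha}\}$ (with $\langle n\rangle_{2\alpha} := (1+|n|^{2\alpha})^{1/2}$), whose Fourier coefficients carry the small-divisor weights $1/\Phi(j_1,j_2,j_3)$ controlled by \eqref{lower bound divisori frazionari}.

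\textbf{Key steps.} First, write $\{\|\Pi_N u\|_{L^4}^4, \mathcal F_N\}(u)$ explicitly as a sextic multilinear form in $u, \bar u$ with a frequency constraint and an explicit kernel built from the reciprocal small divisors; bound the kernel pointwise using \eqref{lower bound divisori frazionari}. Second, substitute $u = \sum_n (g_n/\langle n\rangle_{2\alpha}) e^{inx}$ and split the sextic expression dyadically in the six frequencies, organizing by which frequency is largest. Third, on the event $\{\|u\|_{L^2} \leq R\}$ (built into $\tilde\gamma_\alpha$), one already has an $L^2$ bound on $u$; the heart of the matter is then to show that the remaining summations, after extracting the $L^2$ mass, converge — this is exactly where the constraint $\alpha > \bar\alpha$ enters, since the loss $(2-2\alpha)$ from the small divisors must be beaten by the smoothing $\langle n\rangle_{2\alpha}^{-1}$ gained from the Gaussian decay and the off-diagonal gains $|j_1-j_3|^{-1}|j_2-j_3|^{-1}$. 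Fourth, for the $L^p$-norm itself, use hypercontractivity (Wiener chaos): a degree-six polynomial in i.i.d.\ Gaussians obeys $\|F\|_{L^p} \lesssim (p-1)^{3}\|F\|_{L^2}$ in general, but the sharper exponent $p^{1/\zeta(\alpha)}$ with $\zeta(\alpha) > 1$ in \eqref{eq:zeta} must come from a more refined tail analysis — presumably decomposing $F$ into pieces of different Gaussian-chaos degree and different high-frequency cutoffs, estimating the contribution from each dyadic block with a sub-Gaussian / sub-exponential tail via the Hoeffding and Bernstein inequalities \eqref{eq_Hoeffing}--\eqref{eq_Bernstein}, and then summing the dyadic tails. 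The exponent $\zeta(\alpha)$, as the minimum of the two expressions in \eqref{eq:zeta}, strongly suggests an optimization between two regimes (one bound dominating for frequencies below some threshold, the other above), balanced to yield the best exponent.

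\textbf{Main obstacle.} The routine part is the multilinear/combinatorial bookkeeping and the convergence of the sums; the genuinely delicate part is obtaining the \emph{quantitative} tail that yields $p^{1/\zeta(\alpha)}$ rather than the crude $p^{3}$ from naive hypercontractivity. I expect this requires a careful interpolation: for each dyadic scale $N'$ one gets a bound of the form $P(|F_{N'}| > \lambda) \lesssim \exp(-c\,\lambda^{\beta} (N')^{-\gamma})$ for suitable exponents $\beta, \gamma$ depending on $\alpha$ (coming from how many ``free'' Gaussians survive after the constraint and how the small-divisor losses scale with $N'$), and then summing over $N'$ and integrating the tail to recover the moment bound; the two branches of the minimum in \eqref{eq:zeta} presumably correspond to which of two competing exponential rates controls the sum. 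Getting these exponents exactly right — and checking that $\zeta(\alpha) > 1$ precisely when $\alpha > \bar\alpha = \frac{1+\sqrt{97}}{12}$ (note $12\bar\alpha^2 - 2\bar\alpha - 8 = 0$, i.e.\ $\bar\alpha$ is the positive root of $6x^2 - x - 4 = 0$, which is what makes the second expression in \eqref{eq:zeta} exceed $1$) — is where the real work lies, and it is the reason for the restriction on $\alpha$ in Theorem \ref{TH:main-alpha}.
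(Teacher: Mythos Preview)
Your starting identity is exactly right, and you correctly identify that the sextic Poisson bracket $\{\|\Pi_N u\|_{L^4}^4,\mathcal F_N\}$ is the main term and that naive hypercontractivity only gives $p^3$. However, the route you sketch --- dyadically decomposing the sextic form directly in the six frequencies and estimating each block via Hoeffding/Bernstein --- is not what the paper does, and your proposal is too vague at the crucial step to see that it would produce the exponent $\zeta(\alpha)$.

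The paper's argument is more indirect but more concrete. First (Lemma~\ref{lemma:dtE1}), it proves the \emph{deterministic} pointwise bound
\[
|G_N(u)|\lesssim \|\Pi_N u\|_{L^2}^4+\|\Pi_N u\|_{L^2}^2\|\Pi_N u\|_{FL^{0,1}}^2+\|\Pi_N u\|_{L^2}\,\|\Pi_N u\|_{H^{2-2\alpha}}^2\,\|\Pi_N u\|_{FL^{0,1}}^3\,,
\]
obtained by estimating $\{\|\Pi_N u\|_{L^4}^4,\mathcal F_N\}$ through $\|X_{\mathcal F_N}(u)\|_{L^2}$ and \eqref{RickBoundE1}. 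This factors the problem through two \emph{scalar} random variables, $\|\Pi_N u\|_{FL^{0,1}}$ and $\|\Pi_N u\|_{H^{2-2\alpha}}$, whose tails under $\tilde\gamma_\alpha$ are estimated separately (Lemmas~\ref{lemma:subexpHs}--\ref{lemma:subexpFL}). The mechanism you allude to but do not exploit --- the $L^2$-cutoff in $\tilde\gamma_\alpha$ --- is precisely what makes these tails much better than Gaussian: low frequencies are controlled deterministically by $R$, so only high-frequency blocks contribute probabilistically, and one gets $\tilde\gamma_\alpha(\|\Pi_N u\|_{FL^{0,1}}\geq t)\lesssim e^{-c(R)t^{2+2\alpha}}$ and $\tilde\gamma_\alpha(\|\Pi_N u\|_{H^{2-2\alpha}}\geq t)\lesssim e^{-c(R)t^{\alpha/(1-\alpha)}}$. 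Optimizing the split $t=t^{k_1}t^{k_2}$ between the cubic $FL^{0,1}$ factor and the quadratic $H^{2-2\alpha}$ factor is what produces the second branch $\frac{2\alpha(\alpha+1)}{4-4\alpha^2+3\alpha}$ of $\zeta(\alpha)$; your computation of the threshold $6\alpha^2-\alpha-4>0$ is correct for this branch.

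The piece you miss entirely is the origin of the \emph{first} branch $\tfrac13+\tfrac{2\alpha-1}{3(1-\alpha)}$. The tail estimate on $\|\Pi_N u\|_{FL^{0,1}}$ is only valid for $t\gtrsim N^{1-\alpha}$. To handle small $t$, the paper splits $G_N=G_T+(G_N-G_T)$ with $T=\lfloor t^{1/(1-\alpha)}\rfloor$: the first piece falls in the good range, while for the second piece one uses Wiener chaos hypercontractivity \emph{on the difference} (Lemma~\ref{lemma:wick}), which carries the decay $\|G_N-G_T\|_{L^p}\lesssim p^3 T^{-(2\alpha-1)}$, yielding the tail $e^{-ct^{1/3}T^{(2\alpha-1)/3}}=e^{-ct^{1/3+(2\alpha-1)/(3(1-\alpha))}}$. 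This is not an optimization between ``two competing exponential rates in a dyadic sum'' as you conjecture, but a genuinely different splitting in which hypercontractivity is applied only to a remainder that already decays in the truncation parameter.
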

\begin{remark}
Note that $\zeta(1) = 4/3$. 
\end{remark}
We start with the following deterministic estimate.
\begin{lemma}\label{lemma:dtE1}
For all $\a\in(3/4,1]$ it holds
\be
\left|\frac{d}{dt} H^{(\a)}[\Phi^N_t (u)]\big|_{t=0}\right|\lesssim  \|\Pi_Nu\|^4_{L^2}+ 
\|\Pi_Nu\|^2_{L^2}\|\Pi_Nu\|_{FL^{0,1}}^2+\|\Pi_Nu\|_{L^2} \|\Pi_Nu\|^2_{H^{2-2\alpha}} \|\Pi_Nu\|^3_{FL^{0,1}}\,.
\ee
\end{lemma}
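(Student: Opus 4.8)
The plan is to recognize the quantity as a Poisson bracket and then annihilate its only dangerous piece via the homological identity of Lemma~\ref{lemma equazione omologica}. Since $\Phi^N_t$ is the Hamiltonian flow of $\mathcal F_N$ and acts on $L^2(\T)$ through the truncation $\Pi_N$ (so $\Phi^N_t(u)=\Phi^N_t(\Pi_N u)\in\mathcal E_N$), the chain rule together with the definition~\eqref{campo hamiltoniano NLS} of the Hamiltonian vector field gives
\[
\frac{d}{dt}H^{(\a)}[\Phi^N_t(u)]\Big|_{t=0}=\{H^{(\a)},\mathcal F_N\}(\Pi_N u)\,,
\]
and writing $H^{(\a)}=\||D_x|^\alpha\cdot\|_{L^2}^2+\tfrac{\sigma}{2}\|\cdot\|_{L^4}^4$ and using linearity of the bracket in its first argument, this equals $\{\||D_x|^\alpha\Pi_N u\|_{L^2}^2,\mathcal F_N\}+\tfrac{\sigma}{2}\{\|\Pi_N u\|_{L^4}^4,\mathcal F_N\}$. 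By Lemma~\ref{lemma equazione omologica} the first bracket is $\sigma\|\Pi_N u\|_{L^2}^4-\tfrac{\sigma}{2}\|\Pi_N u\|_{L^4}^4$: the summand $\sigma\|\Pi_N u\|_{L^2}^4$ is already the first contribution of the claim, and $\|\Pi_N u\|_{L^4}^4=\|(\Pi_N u)^2\|_{L^2}^2\leq\|\Pi_N u\|_{FL^{0,1}}^2\|\Pi_N u\|_{L^2}^2$ (Young on the Fourier side) gives the second. It is precisely here that the Birkhoff normal form structure is exploited: without the homological cancellation the bracket $\{\||D_x|^\alpha\Pi_N u\|_{L^2}^2,\mathcal F_N\}$ would carry the $H^\alpha$-norm, which is useless for the subsequent probabilistic bounds.

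The only genuine work left is the sextic term $\{\|\Pi_N u\|_{L^4}^4,\mathcal F_N\}$. Setting $v:=\Pi_N u\in\mathcal E_N$ and recalling from the proof of Lemma~\ref{prop campo hamiltoniano X FN frazionario rick} that $\nabla_{\bar v}\mathcal F_N(v)$ equals the trilinear form $\mathcal T[v,v,\bar v]$, a direct computation of $d\bigl(\|\cdot\|_{L^4}^4\bigr)(v)\bigl[i\nabla_{\bar v}\mathcal F_N(v)\bigr]$ yields
\[
\bigl|\{\|v\|_{L^4}^4,\mathcal F_N\}(v)\bigr| = 4\,\bigl|\Im\langle |v|^2 v,\ \mathcal T[v,v,\bar v]\rangle_{L^2}\bigr| \leq 4\, \| |v|^2 v \|_{L^2}\, \|\mathcal T[v,v,\bar v]\|_{L^2}\,.
\]
Then I would bound $\| |v|^2 v \|_{L^2}=\|v\|_{L^6}^3\lesssim\|v\|_{L^\infty}^3\leq\|v\|_{FL^{0,1}}^3$, and control the other factor by the already-proven estimate~\eqref{prop cal T rick frazionario}, used with $s=2-2\alpha$ (admissible since $\alpha\in(3/4,1]$), together with the trivial inequality $\|\cdot\|_{L^2}\leq\|\cdot\|_{H^{2-2\alpha}}$, to get $\|\mathcal T[v,v,\bar v]\|_{L^2}\lesssim\|v\|_{H^{2-2\alpha}}^2\|v\|_{L^2}$. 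Multiplying the two bounds produces exactly $\|\Pi_N u\|_{L^2}\|\Pi_N u\|_{H^{2-2\alpha}}^2\|\Pi_N u\|_{FL^{0,1}}^3$, the third contribution of the claim, which finishes the proof.

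I do not expect a genuine obstacle: the content is packaged in Lemma~\ref{lemma equazione omologica} and in the vector field bound of Lemma~\ref{prop campo hamiltoniano X FN frazionario rick}, and the rest is bookkeeping. The points that need a little care are: the uniformity in $N\in\N\cup\{\infty\}$ of the identification with the Poisson bracket (immediate, $\mathcal F_N$ being a smooth quartic polynomial on $\mathcal E_N$ and all the estimates being $N$-independent); the tracking of numerical constants and complex conjugations when computing $\{\|v\|_{L^4}^4,\mathcal F_N\}$; and the check that the resonance exclusion in the sum~\eqref{formula cal FN} is exactly the one carried by $\mathcal T$, so that $\nabla_{\bar v}\mathcal F_N(v)=\mathcal T[v,v,\bar v]$ on the nose. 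If one prefers to avoid quoting~\eqref{prop cal T rick frazionario}, the same bound follows by expanding $\{\|v\|_{L^4}^4,\mathcal F_N\}$ on the Fourier side, inserting the small-divisor lower bound~\eqref{lower bound divisori frazionari}, isolating the convolution $\sum_{a+b-c=m}|v(a)||v(b)||v(c)|$ and summing by Cauchy--Schwarz; in that route the threshold $\alpha>3/4$ enters through the convergence of $\sum_{k\neq 0}|k|^{-2(2\alpha-1)}$, but it is strictly longer.
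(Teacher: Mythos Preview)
Your proposal is correct and follows essentially the same route as the paper: Poisson-bracket identification, the homological identity of Lemma~\ref{lemma equazione omologica} to kill the $H^\alpha$ piece, the bound $\|\Pi_N u\|_{L^4}^4\leq\|\Pi_N u\|_{L^2}^2\|\Pi_N u\|_{FL^{0,1}}^2$, and for the sextic bracket a Cauchy--Schwarz pairing combined with the vector-field estimate $\|X_{\mathcal F_N}\|_{L^2}\leq\|X_{\mathcal F_N}\|_{H^{2-2\alpha}}\lesssim\|u\|_{H^{2-2\alpha}}^2\|u\|_{L^2}$. The only cosmetic difference is that the paper bounds $\bigl|\int u\bar u^2 h\,dx\bigr|\lesssim\|h\|_{L^2}\|u\|_{FL^{0,1}}^3$ directly on the Fourier side via $|h(n)|\leq\|h\|_{\ell^2}$, whereas you phrase the same step as $\||v|^2v\|_{L^2}=\|v\|_{L^6}^3\leq\|v\|_{FL^{0,1}}^3$; these are the same inequality.
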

\begin{proof}
We have
\bea\label{MainIdentity}
\frac{d}{dt} H^{(\a)}[\Phi^N_t (u)]\big|_{t=0}&=&\{H^{(\a)}[\Pi_N u],\mathcal F_N\}\nn\\
&=& \{ \||D_x|^{\alpha} \Pi_Nu\|^2_{L^2}, {\mathcal F}_N \} + \frac{\sigma}2 \{\|\Pi_Nu\|^4_{L^4},{\mathcal F}_N\}\nn\\
&\stackrel{\eqref{omologica cal FN}}{=}&- \frac{\sigma}{2}\|\Pi_Nu\|^4_{L^4}+ \sigma \|\Pi_Nu\|^4_{L^2}+ \frac{\sigma}2 \{\|\Pi_Nu\|^4_{L^4},{\mathcal F}_N\}\,. \label{eq:outdt}
\eea
The first summand in (\ref{eq:outdt}) is bounded by
\be\label{eq:rci0}
\|\Pi_Nu\|^4_{L^4}\leq \|\Pi_Nu\|^2_{L^2}\left(\sum_{|n|\leq N}|u(n)|\right)^2 \leq  \|\Pi_Nu\|^2_{L^2}  \|\Pi_Nu\|^2_{FL^{0, 1}}\,.
\ee
Hence, we need to estimate only the term $\{\|\Pi_Nu\|^4_{L^4},{\mathcal F}_N\}$. By the definition of the Poisson brackets given in \eqref{def Poisson}, one has that 
$$
\begin{aligned}
& \{\|\Pi_Nu\|^4_{L^4},{\mathcal F}_N(u)\} = D_U {\mathcal G}(U)[X_{{\mathcal F}_N}(U)] \quad \text{where} \\
& {\mathcal G}(U) := \int_\T u^2 \bar u^2\, d x\,. 
\end{aligned}
$$
We write
$$
\begin{aligned}
& D_U {\mathcal G}(U) [h] = {\mathcal T}_1 h + {\mathcal T}_2 \bar h\,, \\
& {\mathcal T}_1 h :=  2 \int_\T u \bar u^2 h\, d x, \quad {\mathcal T}_2 \bar h :=  2 \int_\T u^2 \bar u \bar h\,dx \,. 
\end{aligned}
$$
We have
\be\label{eq"rci1}
\begin{aligned}
|{\mathcal T}_1 h | & \lesssim \sum_{j_1 + j_2 - j_3 - j_4 = 0} |h(j_1)| |u(j_2)| |u(j_3 )| | u(j_4)| \\
& \lesssim \sum_{j_2, j_3, j_4} |h(j_3 + j_4 - j_2)| |u(j_2)|  |u(j_3 )| | u(j_4)|  \\
& \lesssim \| h \|_{L^2} \sum_{j_2, j_3, j_4}  |u(j_2)|  |u(j_3 )| | u(j_4)| \lesssim \| h \|_{L^2} \| u \|_{FL^{0, 1}}^3 \\
& \text{and similarly} \quad |{\mathcal T}_2 \bar h| \lesssim \| h \|_{L^2} \| u \|_{FL^{0, 1}}^3 \,. 
\end{aligned}
\ee
Therefore 
\be\label{eq"rci2}
|D_U {\mathcal G}(U)[X_{\mathcal F_N}(U)]| \lesssim \| u \|_{FL^{0, 1}}^3 \| X_{\mathcal F_N}(U) \|_{L^2}\,.
\ee
Combining \eqref{eq"rci2} and \eqref{RickBoundE1} (recall that $2 - 2 \a \geq 0$) gives
\be\label{eq"rci3}
\begin{aligned}
|\{\|\Pi_Nu\|^4_{L^4},{\mathcal F}_N(u)\}| & = |D_U {\mathcal G}(U)[X_{{\mathcal F}_N}(U)]|  \lesssim \| u \|_{FL^{0, 1}}^3 \| X_{{\mathcal F}_N}(U) \|_{L^2} \\
& \lesssim \| u \|_{FL^{0, 1}}^3 \| X_{{\mathcal F}_N}(U) \|_{2 - 2 \a} \lesssim \| u \|_{FL^{0, 1}}^3 \| u \|_{2 - 2 \a}^2 \| u \|_{L^2}\,. 
\end{aligned}
\ee
Using \eqref{eq:rci0} and \eqref{eq"rci3} in \eqref{MainIdentity} we finish the proof. 
\end{proof}

We now establish the following tail estimates.

\begin{lemma}\label{lemma:subexpHs}
Let $s<\a-\frac12$. There is $c(R)>0$ such that for all $t>0$
\be\label{eq:subexpFL}
\tilde\g_{\alpha,N}\left(\|\Pi_Nu\|_{H^{s}}\geq t\right)\leq \exp\left(-c(R)t^{\frac{2\a}{s}}\right) \,.
\ee
\end{lemma}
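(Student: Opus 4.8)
The plan is to exploit the $L^2$ cut-off hidden in $\tilde\g_{\alpha,N}$ to control the low frequencies deterministically, and to treat the high frequencies by a Gaussian large-deviation estimate. Under $\tilde\g_{\alpha,N}$ we realise $\Pi_N u = \sum_{|n| \leq N}(1+|n|^{2\a})^{-1/2} g_n\, e^{inx}$ with $\{g_n\}$ i.i.d.\ centred complex Gaussians of unit variance, conditioned to $\|\Pi_N u\|_{L^2} \leq R$; we may assume $0 < s < \a - \frac12$, since for $s \leq 0$ the statement is trivial ($\|\Pi_N u\|_{H^s} \lesssim \|\Pi_N u\|_{L^2} \leq R$ on the support). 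Fix a threshold $M \in \N$ and split $\|\Pi_N u\|_{H^s}^2$ into its contributions from $|n| \leq M$ and from $M < |n| \leq N$. On the support of $\tilde\g_{\alpha,N}$ the first piece is $\lesssim (1+M^{2s})\|\Pi_N u\|_{L^2}^2 \lesssim (1+M^{2s})R^2$, so choosing $M \simeq (t/R)^{1/s}$ (which is $\geq 1$ once $t \gtrsim R$) makes it $\leq t^2/2$; hence, for $t \gtrsim R$,
\be
\tilde\g_{\alpha,N}\big(\|\Pi_N u\|_{H^s} \geq t\big) \leq \g_{\alpha,N}\Big(\sum_{M < |n| \leq N} a_n |g_n|^2 \geq \frac{t^2}{2}\Big)\,, \qquad a_n := \frac{1+|n|^{2s}}{1+|n|^{2\a}} \simeq \langle n \rangle^{-2(\a-s)}\,.
\ee

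The right-hand side is the tail of a weighted sum of i.i.d.\ squared Gaussians. Since $s < \a - \frac12$ we have $2(\a-s) > 1$, hence $\sum_{|n| > M} a_n \simeq M^{1-2(\a-s)} \to 0$; in particular the mean $E_\a\big[\sum_{M < |n| \leq N} a_n |g_n|^2\big] = \sum_{M < |n| \leq N} a_n$ (and the mean of its square root) is $\leq t^2/4$ once $t \gtrsim_R 1$. A Bernstein-type inequality for weighted sums of squared Gaussians (in the spirit of \eqref{eq_Bernstein}; equivalently, Gaussian concentration of the Lipschitz functional $g \mapsto (\sum a_n |g_n|^2)^{1/2}$, whose Lipschitz constant is $\simeq (\max_{|n| > M} a_n)^{1/2} \simeq M^{-(\a-s)}$) then gives, for $t \gtrsim_R 1$,
\be
\g_{\alpha,N}\Big(\sum_{M < |n| \leq N} a_n |g_n|^2 \geq \frac{t^2}{2}\Big) \leq \exp\!\big(-c\, t^2\, M^{2(\a-s)}\big)\,.
\ee
Substituting $M \simeq (t/R)^{1/s}$ and using $2 + \frac{2(\a-s)}{s} = \frac{2\a}{s}$ turns the exponent into $c(R)\, t^{2\a/s}$, which proves \eqref{eq:subexpFL} for $t \geq t_0(R)$; every constant here is independent of $N$, since each sum runs over $M < |n| \leq N$ and is thus dominated by the corresponding sum over all $|n| > M$. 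For $0 < t < t_0(R)$ one uses instead the crude bound $\tilde\g_{\alpha,N}(\|\Pi_N u\|_{H^s} \geq t) \leq \tilde\g_{\alpha,N}(\{\|u\|_{L^2} \leq R\})$, whose right-hand side is non-increasing in $N$ and hence bounded by its value at $N = 0$, some $m_\ast < 1$; shrinking $c(R)$ so that $\exp(-c(R) t_0(R)^{2\a/s}) \geq m_\ast$ covers the small-$t$ range, and combining the two ranges gives the lemma.

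The step needing care is the choice of the threshold $M$: it has to be large enough for the deterministic low-frequency contribution to be absorbed into $t^2/2$, yet the high-frequency Lipschitz constant $M^{-(\a-s)}$ it produces must still reproduce the \emph{full} exponent $2\a/s$. This balance — together with the vanishing of the high-frequency mean $\sum_{|n| > M} a_n$ as $M \to \infty$ — is precisely where the hypothesis $s < \a - \frac12$ is used. Note also that without the $L^2$ cut-off the same argument (or a direct Borell--TIS estimate) would only yield the weaker Fernique-type decay $\exp(-c\, t^2)$.
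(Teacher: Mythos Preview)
Your proof is correct and follows essentially the same strategy as the paper: both choose the threshold $M \simeq (t/R)^{1/s}$, control the low frequencies deterministically via the $L^2$ cut-off, and handle the high frequencies by Gaussian concentration, arriving at the same exponent $2+\tfrac{2(\a-s)}{s}=\tfrac{2\a}{s}$. The only difference is packaging---the paper uses a Littlewood--Paley decomposition together with a union bound over dyadic blocks and applies Bernstein blockwise, whereas you make a single frequency cut at $M$ and apply the Lipschitz form of Gaussian concentration to the whole tail at once; your version is slightly more direct, but the idea is the same.
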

\begin{proof}
Since
\begin{equation*}
\|\Pi_N u\|_{H^s}
\lesssim\sum_{j\in\N}2^{js}\|\D_j \Pi_N u\|_{L^2}\,,
\end{equation*}
we have
\be\label{eq:11}
\tilde\g_{\a,N}(\|\Pi_Nu\|_{H^s}\geq t)\lesssim \tilde\g_{\a,N}(\sum_{j\in\N}2^{js}\|\D_j \Pi_Nu\|_{H^s}\geq t)\,. 
\ee

Let now 
\begin{equation}\label{Rec1pp}
j_t:=\min\{j\in\N\,:\, 2^j\geq (t/R)^{\frac1s}\}
\end{equation}
so that we have 
\begin{equation}\label{obv}
2^{js} < t/R \quad \mbox{for} \quad j < j_t. 
\end{equation}
Hence
$$
\sum_{0 \leq j <  j_t} 2^{js} \|\D_j \Pi_N u\|_{L^{2}} \lesssim 2^{j_ts}R \lesssim \frac{t}{R} R = t, 
$$
$\tilde \g_{\a,N}$-a.s., therefore
\be\label{eq:21}
\tilde\g_{\a,N}\Big(\sum_{0 \leq j < j_t}  2^{js} \|\D_j P_N u\|_{L^{2}} \geq t\Big)=0\,.
\ee
Thus (\ref{eq:11}) and (\ref{eq:21}) give
\be\label{eq:3}
\tilde\g_{\a,N}(\|\Pi_Nu\|_{H^s}\geq t)\lesssim \tilde\g_{\a,N}(\sum_{j\geq j_t}2^{js}\|\D_j \Pi_Nu\|_{L^{2}}\geq t)\,. 
\ee

Let $c_0>0$ small enough in such a way that
\be\label{eq:sigma} 
\s_j:=c_0(j-j_t+1)^{-2}\,,\quad\sum_{j\in\N}\s_j\leq 1\,.  
\ee
Therefore we can bound
\be\label{eq:41}
\tilde\g_{\a,N}\left(\sum_{j\geq j_t}2^{js}\|\D_j \Pi_Nu\|_{L^{2}}\geq t\right)\leq \sum_{j\geq j_t}\tilde\g_{\a,N}(\|\D_j \Pi_Nu\|_{L^{2}}\geq \s_j2^{-js}t)\,.
\ee
For each term of this sum we have ($\{g_j\}_{j\in\Z}$ indicates a sequence of $\mathcal N(0,1)$ random variables)
\bea
\tilde\g_{\a,N}(\|\D_j \Pi_Nu\|_{L^{2}}\geq \s_j2^{-js}t)&=&\tilde\g_{\a,N}(\|\D_j \Pi_Nu\|^2_{L^2}\geq 2^{-2js}\s^2_jt^{2})\nn\\
&\leq&P\left(\sum_{j\sim 2^j}|g_j|^2\geq 2^{2j(\a-s)}\s^2_jt^{2}\right)\nn\\
&=&P\left(\sum_{j\sim 2^j}(|g_j|^2-1)\geq  2^{2j(\a-s)}\s^2_jt^{2}-2^j\right)\nn\\
&\leq&P\left(\sum_{j\sim 2^j}(|g_j|^2-1)\geq c 2^{2j(\a-s)}\s^2_jt^{2} \right)\nn
\eea
where $c>0$ is a suitable small constant and we used the fact that $2(\a-s) > 1$ in the last inequality 
Then from the Bernstein inequality we get 
$$
P\left(\sum_{j\sim 2^j}(|g_j|^2-1)\geq c 2^{2j(\a-s)}\s^2_jt^{2} \right)\nn\\
\leq e^{-c 2^{2j(\a-s)}\s^2_jt^{2}}
$$
Thus recalling \eqref{Rec1pp} we have arrive to the desired estimate
\bea
\mbox{r.h.s. of \eqref{eq:41}}
&\leq& \sum_{j > j_t} e^{-c 2^{2j(\a-s)}\s^2_jt^{2}}\nn\\
&\leq&e^{-c2^{j_t(\a-s)}\s^2_{j_t} t^{2}}\nn\\
&\leq& \exp\left(-c(R)t^{\frac{2\a}{s}}\right)\,.
\eea
\end{proof}

\begin{lemma}\label{lemma:subexpFL}
There is $c>0$ such that
\be\label{eq:subexpFL}
\tilde\g_{\alpha,N}\left(\|\Pi_Nu\|_{FL^{0,1}}\geq t\right)\leq 2\exp\left(- c \frac{t^{2+2\a}}{R^{2\a}}\right) \,,
\ee
for all $t\gtrsim (\log N)^{2}$ if $\a=1$ or $t\gtrsim N^{1-\a}$ for $\a < 1$. 
\end{lemma}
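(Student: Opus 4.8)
The plan is to combine the deterministic consequence of the $L^2$-ball truncation built into $\tilde\g_{\alpha,N}$ with a dyadic use of the Hoeffding bound \eqref{eq_Hoeffing}. Recall that under $\g_\a$ one has $u(n)=g_n(1+|n|^{2\alpha})^{-1/2}$, so that $\|\Pi_N u\|_{FL^{0,1}}=\sum_{|n|\le N}|u(n)|\lesssim\sum_{0\le j\lesssim\log N}S_j$, with $S_j:=\sum_{|n|\sim 2^j}|u(n)|\lesssim 2^{-j\alpha}\sum_{|n|\sim 2^j}|g_n|$. Applying \eqref{eq_Hoeffing} to the $\simeq 2^j$ variables $|g_n|$, $|n|\sim 2^j$, then gives $\tilde\g_{\alpha,N}(S_j\ge\mu)\le C\exp(-c\,2^{j(2\alpha-1)}\mu^2)$ whenever $\mu$ exceeds a fixed multiple of $E_\a[S_j]\simeq 2^{j(1-\alpha)}$; since $\alpha>1/2$ these exponents grow in $j$, which is why the low-frequency blocks are the bottleneck and where the $L^2$ restriction must help.

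First I would fix a cutoff $K\simeq t^2/R^2$ and use that on $\{\|\Pi_N u\|_{L^2}\le R\}$ the Cauchy--Schwarz inequality yields the purely deterministic bound $\sum_{|n|\le K}|u(n)|\le(2K+1)^{1/2}R\le t/2$. Consequently $\{\|\Pi_N u\|_{FL^{0,1}}\ge t\}\cap\{\|\Pi_N u\|_{L^2}\le R\}\subset\{\sum_{K<|n|\le N}|u(n)|\ge t/2\}$, and it remains to bound the probability of the latter event. This I would do by a union bound over the dyadic blocks $j$ with $2^j>K$, splitting the budget $t/2$ by weights $\s_j$ with $\sum_j\s_j\le 1$ chosen so that each $\s_j t$ comfortably dominates $E_\a[S_j]$ (so that \eqref{eq_Hoeffing} is in its effective regime), which produces for each block a factor $\le C\exp(-c\,2^{j(2\alpha-1)}\s_j^2 t^2)$; optimizing the $\s_j$ and inserting $K\simeq t^2/R^2$ leads, after summing the $O(\log N)$ surviving blocks, to a bound of the asserted exponential type, the blocks with $2^j\le K$ contributing nothing in view of the deterministic estimate $S_j\le(2\cdot2^j)^{1/2}R$.

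The restriction on $t$ has two origins: the $O(\log N)$ loss from the union bound must be absorbed, and, more importantly, $t/2$ must exceed $E_\a\big[\sum_{K<|n|\le N}|u(n)|\big]\simeq\sum_{K<|n|\le N}|n|^{-\alpha}$, which is $\simeq\log N$ when $\alpha=1$ and $\simeq N^{1-\alpha}$ when $\alpha<1$ — precisely the two regimes in the statement. The main obstacle I foresee is exactly this optimization step: for $\alpha<1$ the mean of $\|\Pi_N u\|_{FL^{0,1}}$ is carried by the \emph{highest} frequencies rather than the lowest, so the weights $\s_j$ and the cutoff $K$ have to be calibrated with care, and differently from the $\alpha=1$ case, in order to balance the $j$-dependent exponents $2^{j(2\alpha-1)}\s_j^2t^2$ against the constraint $\sum_j\s_j\le1$ while still reaching the claimed powers of $t$ and $R$; once this calibration is fixed, the rest is a routine combination of Cauchy--Schwarz and the Hoeffding tail bound.
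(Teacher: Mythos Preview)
Your approach is essentially the same as the paper's: a dyadic decomposition, the deterministic Cauchy--Schwarz bound on the low-frequency part $|n|\lesssim K\simeq t^2/R^2$ using the $L^2$-ball restriction, and a weighted union bound over the remaining blocks combined with Hoeffding's inequality after centering, the restriction on $t$ arising exactly from the requirement $\sigma_j t\gtrsim E_\a[S_j]$. The paper does not optimize the weights but simply takes $\sigma_j=c_0(j-j_t+1)^{-2}$, which already suffices since the exponent $2^{j(2\alpha-1)}\sigma_j^2 t^2$ is increasing in $j$ and hence the block $j=j_t$ dominates the sum---so the ``calibration'' you worry about is not actually delicate.
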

\begin{proof}
We have
\be
\|u\|_{FL^{0,1}}=\sum_{j\in\N}\sum_{n\sim 2^j}|u(n)|\leq\sum_{j\in\N}\|\D_j u\|_{FL^{0,1}}\,.
\ee

Then 
\be\label{eq:1}
\tilde\g_{\a,N}(\|\Pi_Nu\|_{FL^{0,1}}\geq t)\leq \tilde\g_{\a,N}(\sum_{j\in\N}\|\D_j \Pi_Nu\|_{FL^{0,1}}\geq t)\,. 
\ee

We note that
\be\label{eq:holder}
\|\D_j u\|_{FL^{0,1}}\leq 2^{\frac j2}\|\D_j u\|_{L^{2}}\,.
\ee

Let now 
$$
j_t:=\min\{j\in\N\,:\, R2^{\frac{j}{2}}\geq t\}\,,
$$ 
so that we have 
\begin{equation}\label{obv}
R2^{\frac{j}{2}}< t \quad \mbox{for} \quad j < j_t. 
\end{equation}
Therefore using (\ref{eq:holder}) and (\ref{obv}) we get
$$
\sum_{0 \leq j <  j_t} \|\D_j \Pi_N u\|_{FL^{0,1}} \leq 2^{\frac{j_t}{2}}R <t, 
$$
for any element of $B(R)$, therefore
\be\label{eq:2}
\tilde\g_{\a,N}\Big(\sum_{0 \leq j < j_t}  \|\D_j \Pi_N u\|_{FL^{0,1}} \geq t\Big)=0\,.
\ee
Thus (\ref{eq:1}) and (\ref{eq:2}) give
\be\label{eq:3}
\tilde\g_{\a,N}(\|\Pi_Nu\|_{FL^{0,1}}\geq t)\leq \tilde\g_{\a,N}(\sum_{j\geq j_t}\|\D_j \Pi_Nu\|_{FL^{0,p}}\geq t)\,. 
\ee

Let $c_0>0$ small enough in such a way that 
\be\label{eq:sigma}
\s_j:=c_0(j-j_t+1)^{-2}\,,\quad\sum_{j\in\N}\s_j\leq 1\,.  
\ee
We bound
\be\label{eq:4}
\tilde\g_{\a,N}\left(\sum_{j\geq j_t}\|\D_j \Pi_Nu\|_{FL^{0,1}}\geq t\right)\leq \sum_{j\geq j_t}\tilde\g_{\a,N}(\|\D_j \Pi_Nu\|_{FL^{0,1}}\geq \s_jt)\,.
\ee
We introduce the centred sub-Gaussian random variables
$$
Y_j:=\|\D_j \Pi_Nu\|_{FL^{0,1}}-E_\a[\|\D_j \Pi_Nu\|_{FL^{0,1}}]=\sum_{n\sim 2^j}\frac{|g_n|-E[|g_n|]}{\meanv{n}^\a}\, 
$$
and we have
\be\label{eq:rhsof}
\tilde\g_{\a,N}(\|\D_j \Pi_Nu\|_{FL^{0,1}}\geq \s_jt)=\tilde\g_{\a,N}(Y_j\geq \s_jt-E[\|\D_j \Pi_Nu\|_{FL^{0,1}}])\,. 
\ee
Note that
\be
E_\a[\|\D_j \Pi_Nu\|_{FL^{0,1}}]=\sum_{n\sim2^j}\frac{E[|g_n|]}{\meanv{n}^\a}
\simeq \sum_{n\sim2^j}\frac{1}{\meanv{n}^\a}\,.
\ee
We set
$$
s_j:=\s_jt-E_\a[\|\D_j \Pi_Nu\|_{FL^{0,1}}]\,.
$$
It follows that there is $C>0$ large enough, such that for all $t>C(\log_2N)^{2}$ for $\a=1$ or for $t>CN^{1-\a}$ for $\a\neq1$
\be
\inf_{j\in\{j_t\ldots,\log N\}}s_j>0\,. 
\ee
We have by the Hoeffding inequality
\be
\g_{\a,N}(Y_j\geq s_j)\leq 2\exp(- c 2^{\a j}s_j^2) 
\ee
Thus
\bea
\text{r.h.s. of }\eqref{eq:rhsof}&\leq& \sum_{j\geq j_t} 2\exp(- c 2^{\a j}s_j^2) \leq 2\exp(- c 2^{\a j_t}s_{j_t}^2)\nn\\
&\leq& 2\exp\left(- c \frac{t^{4}}{R^2}\right)\,.
\eea
\end{proof}

In the remaining part of this section it is convenient to shorten
\be\label{eq:def-GN}
G_N:=\frac{d}{dt} H[\Pi_N \Phi^N_t (u)]\big|_{t=0}\,. 
\ee

We have
\begin{lemma}\label{lemma:wick}
It holds for all $M<N\in\N$
\be\label{eq:wick}
\|G_N-G_M\|_{L^p(\tilde\g_\a)}\lesssim\frac{p^3}{M^{2\a-1}}\,. 
\ee
\end{lemma}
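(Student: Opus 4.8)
The quantity $G_N$ is, by Lemma~\ref{lemma:dtE1} (or rather the exact identity \eqref{MainIdentity}--\eqref{eq:outdt} underlying it), a polynomial of degree at most four in the Fourier coefficients of $\Pi_N u$: schematically $G_N = -\tfrac{\sigma}{2}\|\Pi_N u\|_{L^4}^4 + \sigma\|\Pi_N u\|_{L^2}^4 + \tfrac{\sigma}{2}\{\|\Pi_N u\|_{L^4}^4,\mathcal F_N\}$, where the last bracket is a sum of monomials $u(j_1)u(j_2)u(j_3)\overline{u(j_4)}\,\overline{u(j_5)}\,\overline{u(j_6)}/\Phi(\cdots)$ — in fact a polynomial chaos of order $\le 6$ in the Gaussians $\{g_n\}$. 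The difference $G_N - G_M$ is then supported (as a chaos) on frequencies where at least one index exceeds $M$, so the natural strategy is: (i) write $G_N - G_M$ explicitly as a finite linear combination of Wick-ordered monomials in the $g_n$, grouped by homogeneity degree $d\in\{2,4,6\}$; (ii) for each degree apply the hypercontractivity / Wiener chaos estimate, which converts the $L^p(\tilde\g_\a)$ norm into $\lesssim p^{d/2}\,\|\cdot\|_{L^2(\tilde\g_\a)} \le p^3 \|\cdot\|_{L^2}$ (here the factor $\tilde\g_\a \le \g_\a$ on the $L^2$ ball only helps); (iii) estimate the $L^2(\g_\a)$ norm of each piece by a direct second-moment computation and extract the gain $M^{-(2\a-1)}$.

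**Key steps in order.** First I would record the Gaussian representation $u(n) = g_n/\langle n\rangle^\a$ (with $\langle n\rangle = (1+|n|^{2\a})^{1/2}$) so that every monomial in $G_N$ becomes a monomial in $\{g_n\}$ with an explicit deterministic coefficient. Second, I would isolate the three contributions to $G_N$: the $\|\Pi_N u\|_{L^2}^4$ term is a degree-$4$ chaos but $G_N-G_M$ for this term telescopes over the single sum $\sum_{M<|n|\le N}|g_n|^2/\langle n\rangle^{2\a}$ times $\|\Pi_N u\|_{L^2}^2$, which in $L^2$ contributes $\lesssim \sum_{|n|>M}\langle n\rangle^{-2\a}\lesssim M^{-(2\a-1)}$ (using $2\a>1$); the $\|\Pi_N u\|_{L^4}^4$ term is handled identically after expanding in Fourier, again a degree-$4$ chaos whose $N$-to-$M$ difference forces one frequency above $M$; the genuinely new piece is $\{\|\Pi_N u\|_{L^4}^4,\mathcal F_N\}$, a degree-$6$ chaos where I would bound the small divisor from below by \eqref{lower bound divisori frazionari} (giving a factor $|j_a-j_c||j_b-j_c|\langle\cdot\rangle^{-(2-2\a)}$ in the denominator) and then do a Cauchy–Schwarz in the summation indices exactly as in the proof of Lemma~\ref{prop campo hamiltoniano X FN frazionario rick}, but now tracking that one index is restricted to $|j|>M$, which produces the decaying factor $\langle M\rangle^{-(2\a-1)}$ from $\sum_{|j|>M}\langle j\rangle^{-2\a}$. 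Third, I would collect: each of the (finitely many, say $\le C$) monomial types contributes $\lesssim_R M^{-(2\a-1)}$ in $L^2(\tilde\g_\a)$, and hypercontractivity on the chaos of degree $\le 6$ upgrades this to $\lesssim_R p^{3}M^{-(2\a-1)}$ in $L^p$, which is \eqref{eq:wick}. (The $R$-dependence enters through the $\|\Pi_N u\|_{L^2}^2 \le R^2$ factors, which on the ball $\{\|u\|_{L^2}\le R\}$ are bounded; alternatively one keeps them as Gaussian moments, still $O_R(1)$.)

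**Main obstacle.** The delicate point is the degree-$6$ term $\{\|\Pi_N u\|_{L^4}^4,\mathcal F_N\}$: I need the $L^2(\g_\a)$ second-moment estimate to close with the correct power $M^{-(2\a-1)}$ and only $p^3$ (not worse) in $p$. The $p^3$ is forced by the chaos having degree $6$, so that is rigid; the real work is the deterministic sum estimate. Expanding the square of the degree-$6$ Wick monomial, Gaussian pairing identifies the six frequencies of the two copies in pairs, collapsing the $12$-fold sum to a $6$-fold sum weighted by $\langle\cdot\rangle^{-2\a}$ on each surviving index and by two small-divisor denominators bounded below via \eqref{lower bound divisori frazionari}; I would then run the same Cauchy–Schwarz scheme as in Section~\ref{section:flow-fract} (splitting $\langle n\rangle^{\text{(nothing here, }s=0)}$ is trivial, and the factors $|j-j'|^{-1}$ are square-summable because $\sum_k \langle k\rangle^{-2}<\infty$ and $\sum_k \langle k\rangle^{-2(1-a)}<\infty$ for $a=2-2\a<1/2$ when $\a>3/4$). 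The constraint that at least one index be $>M$ is inserted at the last step, where instead of $\sum_{j}\langle j\rangle^{-2\a}\lesssim 1$ one uses $\sum_{|j|>M}\langle j\rangle^{-2\a}\lesssim M^{-(2\a-1)}$; care is needed to make sure this restriction lands on a frequency that carries a full $\langle\cdot\rangle^{-2\a}$ weight and not on one already consumed by a small-divisor factor, but by symmetry of the monomial one can always arrange this. I expect no essential difficulty beyond bookkeeping, since the range $\a>3/4$ (implied by $\a>\bar\alpha$) is exactly what makes all the auxiliary sums converge.
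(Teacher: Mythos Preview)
Your proposal is correct and follows the same strategy as the paper: hypercontractivity (exploiting that $G_N$ lies in the Wiener chaos of order at most $6$, whence the factor $p^3$) reduces the $L^p(\tilde\g_\a)$ bound to an $L^2(\g_\a)$ bound, which is then obtained via the Wick formula using that the difference $G_N-G_M$ forces at least one frequency above $M$ and hence picks up a factor $\sum_{|n|>M}\langle n\rangle^{-2\a}\lesssim M^{-(2\a-1)}$. The only minor variation is in the degree-$6$ Poisson-bracket term: the paper asserts directly that the coefficients $\mathtt c$ in \eqref{formula fourier L4 poisson FN} are bounded (see \eqref{stima c n1 n6}) and performs a bare Wick pairing, whereas you propose the equivalent but more explicit route of inserting the divisor lower bound \eqref{lower bound divisori frazionari} and running a Cauchy--Schwarz argument as in Section~\ref{section:flow-fract}---arguably the more careful choice for fractional $\alpha$, where $1/|\Phi|$ is not uniformly bounded.
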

\begin{proof}

We will prove
\be\label{eq:wick2}
\|G_N-G_M\|_{L^2(\tilde\g_\a)}\lesssim\frac{1}{M^{2\a-1}}\,. 
\ee
The the assertion will follow by the standard hyper-contractivity estimate \cite[Theorem 5.10, Remark 5.11]{janson}, noting that $G_N$ is a multilinear form of at most $6$ factors.

We use formula \eqref{eq:outdt} in the proof of Lemma \ref{lemma:dtE1}. The first summand is bounded in the support of $\tilde\g_\a$ by $R^4$. For the  second and the third summand we estimate the $L^2(\g_\a)$ norm and use that it controls the $L^2(\tilde\g_\a)$ norm. 

We have
\be
\|G_N-G_M\|_{L^2(\tilde\g_\a)}\leq \|\|\Pi_Nu\|^4_{L^4}-\|\Pi_Mu\|^4_{L^4}\|_{L^2(\g_\a)}+\|\{\|\Pi_Nu\|^4_{L^4},{\mathcal F}_N\}-\{\|\Pi_M u\|^4_{L^4},{\mathcal F}_M \}\|_{L^2(\g_\a)}\,.
\ee

By a direct calculation one has that for any $N \in \N$, 
\be\label{formula fourier L4 poisson FN}
\begin{aligned}
& \{\|\Pi_Nu\|^4_{L^4},{\mathcal F}_N\} \\
& =\sum_{\substack{|n_i|,|m_i|\leq N,\,\\\sum_{i=1}^{3}n_i=\sum_{i=1}^{3}m_i} } {\mathtt c}\Big(n_1, n_2, n_3, m_1,m_2, m_3\Big) u(n_1)u(n_2)u(n_3)\overline u(m_1)\overline u(m_2)\overline u(m_3)\,, 
\end{aligned}
\ee
where the coefficients ${\mathtt c}\Big(n_1, n_2, n_3, m_1,m_2, m_3\Big)$ are such that 
\begin{equation}\label{stima c n1 n6}
| {\mathtt c}\Big(n_1, n_2, n_3, m_1,m_2, m_3\Big)| \lesssim 1, \quad \forall n_1, n_2, n_3, m_1,m_2, m_3 \in \Z\,. 
\end{equation}
In what follows we shall use the Wick formula for expectation values of multilinear forms of Gaussian random variables \cite[Theorem 1.28]{janson} in the following form. 
Let $\ell \in \N$ and $S_{\ell}$ be the 
symmetric group on $\{1,\dots,\ell\}$, whose elements are denoted by $\s$. 
We have
\begin{align}\label{eq:Wick}
E_\a\Big[ \prod_{j=1}^{\ell}  u(n_j)  \bar u(m_j)  \Big] 
& = 
\sum_{\sigma \in S_{\ell}}\prod_{j=1}^{\ell} \frac{\d_{m_j,n_{\sigma(j)}}}{1 + |n_j|^{2\a}} 
\\ \nonumber
& \simeq
\sum_{\sigma \in S_{\ell}}\prod_{j=1}^{\ell} \frac{\d_{m_j,n_{\sigma(j)}}}{\meanv{n_j}^{2\a}} 
 \, ,
\end{align}
where $\meanv{\cdot} = (1 + |\cdot|^2)^{1/2}$.
We convey that the labels $m_i$ (respectively $n_i$) are associated to the Fourier coefficients of $\bar u$ (respectively $u$). We say that $\sigma$ contracts the pairs of indexes $(m_j,n_{\sigma(j)})$ and we shorten for any $\Omega\subset \Z^{\ell}\times\Z^\ell$
$$
\s(\Omega):=\Omega\cap\{m_i=n_{\s(i)}\,,i=1,\ldots,\ell\}\,,\quad \s\in S_\ell\,.
$$
We also define the set $\bar \Omega$ to be obtained by $\Omega$ swapping the role of $n_i$ and $m_i$ $i=1,\ldots \ell$. 

Let $N>M$ and define for $a,b\in\N$
\be\label{eq:A-NM}
A^{a,b}_{N,M}:= 
\{
   |n_{a,b}|,|m_{a,b}|\leq N,\,\,
 n_a+n_b=m_a+m_b\,,\,\max(|m_{a,b}|,|n_{a,b}|)>M\}\,.
\ee

Squaring and using (\ref{eq:Wick}) with $\ell=4$ we have
\bea
&&\|\|\Pi_Nu\|^4_{L^4}-\|\Pi_Mu\|^4_{L^4}\|_{L^2(\g_\a)}=\sum_{A^{1,2}_{N,M}\times A^{3,4}_{N,M}}E_\a\left[\prod_{i=1}^4u(n_i)\bar u(m_i)\right]\nn\\
&=&\!\!\!\!\! \sum_{\sigma \in S_{4}} \sum_{\s(A^{1,2}_{N,M})\times \s(A^{3,4}_{N,M})} \frac{1}{\meanv{n_1}^{2\a}\meanv{n_2}^{2\a}\meanv{n_3}^{2\a}\meanv{n_4}^{2\a}} \lesssim \frac1{M^{4\a-2}}\,. 
\eea

Similarly
\be
B^{a,b,c}_{N,M}:= 
\{
   |n_{a,b,c}|,|m_{a,b,c}|\leq N,\,\,
 n_a+n_b+n_c=m_a+m_b+m_c\,,\,\max(|m_{a,b,c}|,|n_{a,b,c}|)>M\,m_c\neq n_b,n_c\}\,
\ee
and
\bea
&&\|\{\|\Pi_Nu\|^4_{L^4},{\mathcal F}_N\}-\{\|\Pi_Nu\|^4_{L^4},{\mathcal F}_N\}\|_{L^2(\g_1)}\nn\\
&\lesssim &\sum_{B^{1,2,3}_{N,M}\times \bar B^{4,5,6}_{N,M}}E_\a\left[\prod_{i=1}^6u(n_i)\bar u(m_i)\right]\nn\\
&\lesssim &\sum_{\sigma \in S_{6}}\sum_{\s(B^{1,2,3}_{N,M})\times \s(\bar B^{4,5,6}_{N,M})}\prod_{i=1}^6\frac{1}{\meanv{n_i}^{2\a}}\lesssim\frac1{M^{4\a-2}}\,.\nn
\eea
\end{proof}

\ni Then we can immediately establish the following result.

\begin{proposition}\label{prop:exp0}
There are $C,c>0$ such that
\be\label{eq:conc1}
\g_\a\left(|G_N-G|\geq t\right)\leq Ce^{-ct^{\frac13} N^{\frac{2\a-1}{3}}}\,. 
\ee
\end{proposition}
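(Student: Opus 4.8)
The plan is to upgrade the moment bound of Lemma~\ref{lemma:wick} into the stretched--exponential tail bound \eqref{eq:conc1} by Chebyshev's inequality followed by optimisation in the integrability exponent $p$; this is the standard passage from $L^p$ control to concentration. First I would define $G$ as the $L^p$--limit of the sequence $(G_N)_N$: by \eqref{eq:wick}, for every fixed $p\geq 1$ and every $N$ one has $\|G_M-G_N\|_{L^p(\g_\a)}\lesssim p^3 N^{-(2\a-1)}$ for all $M>N$ (the estimates in the proof of Lemma~\ref{lemma:wick} are in fact computed directly in $L^2(\g_\a)$ via the Wick formula, so they hold with $\g_\a$ in place of $\tilde\g_\a$). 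Hence $(G_M)_M$ is Cauchy in $L^p(\g_\a)$, its limit $G$ is independent of $p$, and letting $M\to\infty$ gives the quantitative rate
\[
\|G_N-G\|_{L^p(\g_\a)}\;\lesssim\;\frac{p^3}{N^{2\a-1}}\,,\qquad p\geq 1\,.
\]

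Next, for any $p\geq 1$ Chebyshev's inequality yields, with $b:=t\,N^{2\a-1}$,
\[
\g_\a\bigl(|G_N-G|\geq t\bigr)\;\leq\;t^{-p}\,\|G_N-G\|_{L^p(\g_\a)}^{p}\;\leq\;\Bigl(\tfrac{C p^3}{t\,N^{2\a-1}}\Bigr)^{p}\;=\;\exp\!\Bigl(p\bigl(\log C+3\log p-\log b\bigr)\Bigr).
\]
The exponent is minimised over $p>0$ at $p_\ast=e^{-1}(b/C)^{1/3}$, where it equals $-3e^{-1}(b/C)^{1/3}$. Thus, provided $p_\ast\geq 1$ — i.e. $b$ larger than an absolute constant — we obtain
\[
\g_\a\bigl(|G_N-G|\geq t\bigr)\;\leq\;\exp\!\bigl(-c\,b^{1/3}\bigr)\;=\;\exp\!\bigl(-c\,t^{1/3}N^{(2\a-1)/3}\bigr),\qquad c:=3e^{-1}C^{-1/3}>0\,.
\]
When instead $b$ is below that constant, $t^{1/3}N^{(2\a-1)/3}$ is bounded, so the right--hand side of \eqref{eq:conc1} is bounded below by a positive constant and the inequality holds trivially after enlarging $C$. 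This establishes \eqref{eq:conc1}.

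There is no serious obstacle here; the two points deserving a line of care are (i) checking that the moment estimate of Lemma~\ref{lemma:wick} is available with respect to $\g_\a$ rather than only its restriction $\tilde\g_\a$ — immediate from its proof, since the Wick--formula bounds are genuine $L^2(\g_\a)$ estimates — and (ii) honouring the constraint $p\geq 1$ in the optimisation, which is precisely why \eqref{eq:conc1} carries information only in the regime $t\,N^{2\a-1}\gtrsim 1$.
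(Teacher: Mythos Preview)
Your proof is correct and follows essentially the same approach as the paper: the paper also passes from the moment bound of Lemma~\ref{lemma:wick} to the tail estimate, phrasing it as finiteness of the fractional exponential moment $E_\a[\exp(c|G_N-G|^{1/3}N^{(2\a-1)/3})]$ followed by Markov's inequality, which is equivalent to your direct Chebyshev-plus-optimisation in $p$. Your remark that the $L^p$ estimate of Lemma~\ref{lemma:wick} is really available with respect to $\g_\a$ (not just $\tilde\g_\a$) is also implicitly used in the paper's argument, since both the Wick computations and the hypercontractivity step are carried out for the full Gaussian measure.
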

\begin{proof}
Having bounded all the moments as in Lemma \ref{lemma:wick}, we can bound also the fractional exponential moment
\be\label{eq:exp.fract}
E_\a\left[\exp\left(c|G_N-G_M|^{\frac13}N^{\frac{2\a}{3}-\frac{1}{3}}\right)\right]<\infty\,,
\ee
for a suitable constant $c>0$. 
From (\ref{eq:exp.fract}) we obtain (\ref{eq:conc1}) in the standard way using Markov inequality. 
\end{proof}

\begin{proof}[Proof of Proposition \ref{prop:Lpbound}]
We will prove that there is $c(R)>0$, such that for all $N\in\N\cup\{\infty\}$
\be\label{eq:finaltail}
\tilde\g_{\a,N}(G_N\geq t)\lesssim e^{-c(R)t^{\zeta(\a)}}\,.
\ee
Then the fact that \eqref{eq:Lpestimate} follows from \eqref{eq:finaltail} is standard. 

By Lemma \ref{lemma:dtE1} we see that for $u\in B(R)$
\bea
|G_N|&\leq& R^4+R^2\|\Pi_Nu\|^2_{FL^{0,1}}+R\|\Pi_N u\|^2_{H^{2-2\a}}\|\Pi_N u\|^3_{FL^{0,1}}\nn\\
&\lesssim& C(R)\|\Pi_N u\|^2_{H^{2-2\a}}\|\Pi_N u\|^3_{FL^{0,1}}\,.
\eea
Therefore
\be
\tilde\g_{\a,N}(G_N\geq t)\leq \tilde\g_{\a,N} \Big(\|\Pi_Nu\|_{H^{2-2\a}}^2 \|\Pi_Nu\|^3_{FL^{0,1}} \geq C(R) t \Big)\,.
\ee
Lemma \ref{lemma:subexpFL} yields for $\a=1$
\be\label{eq:pettine}
\tilde\g_{\a,N}(G_N\geq t)\lesssim e^{-c(R)t^{\frac43}}\,,\qquad\text{for}\quad t\gtrsim (\log N)^2\,.
\ee
For $\a<1$ we estimate  
\begin{align}
&\tilde\g_{\a,N}(\|\Pi_Nu\|_{H^{2-2\a}}^2 \|\Pi_Nu\|^3_{FL^{0,1}} \geq C(R) t )
\\ \nonumber
& \leq 
\tilde\g_{\a,N}(\|\Pi_Nu\|^3_{FL^{0,1}}\geq c(R) t^{k_1}) + 
\tilde\g_{\a,N}(\|\Pi_Nu\|^2_{H^{2-2\alpha}}\geq c(R) t^{k_2})
\end{align}
with 
\begin{equation}\label{AssOverK}
k_1 + k_2 =1.
\end{equation} Then using Lemma  \ref{lemma:subexpFL} we bound
$$
\tilde\g_{\a,N}(\|\Pi_Nu\|^3_{FL^{0,1}}\geq t^{k_1}) \lesssim 
\exp\left(- c(R) t^{\frac{(2+2\a)k_1}{3}}\right)\,,\qquad\text{for}\quad t\gtrsim N^{1-\a}\,
$$
and using Lemma \ref{lemma:subexpHs} we have
$$
\tilde\g_{\a,N}(\|\Pi_Nu\|^2_{H^{2-2\alpha}}\geq t^{k_2}) \lesssim 
\exp\left(- c(R) t^{\frac{\a k_2}{2 - 2 \a}}\right)\,.\,
$$
We optimise choosing $k_1, k_2$ such that
$$
\frac{\a k_2}{2 - 2 \a} = \frac{(2+2\a)k_1}{3}.
$$
Together with \eqref{AssOverK}, this leads to the choice 
$$k_2 =  \frac{4 - 4\a^2}{4 - 4\a^2 + 3\a} .$$
It is clear that $k_2 \in (0,1)$ when $\a \in (0,1)$,  thus this choice is admissible.  We finally arrive to
\be\label{eq:pettine2}
\tilde\g_{\a,N}(G_N\geq t)\leq 
\exp\left(- c(R) t^{\frac{2 \a (\a + 1)}{4 - 4\a^2 + 3\a}} \right) \,.
\ee

Note that 
\begin{equation}\label{AlphaCondition0.9}
\frac{2 \a (\a + 1)}{4 - 4\a^2 + 3\a} > 1 \quad \mbox{for} \quad \alpha >   \frac{1+ \sqrt{97}}{12}\,.
\end{equation}
Note also that in order to use Lemma \ref{lemma:subexpHs} we must have $2-2\a < \a - 1/2$, namely $\alpha >5/6$.
This is compatible with \eqref{AlphaCondition0.9}.

When  $\a\neq1$, $t\leq N^{1-\a}$. We set $T:=\lfloor t^\frac{1}{1-\a}\rfloor$. We bound
\be\label{eq>sommandi}
\tilde\g_{\a,N}(|G_N|\geq t)\leq \tilde\g_{\a,N}(|G_T|\geq t/2)+\tilde\g_{\a,N}(|G_N-G_T|\geq t/2)\,. 
\ee
Since $t\geq T^{1-\a}$ the first term can be estimated again by Lemma \ref{lemma:subexpFL}:
\be\label{eq:pettine1}
\tilde\g_{\a,N}(|G_T|\geq t/2)\lesssim \exp\left(- c(R) t^{\frac{2 \a (\a + 1)}{4 - 4\a^2 + 3\a}} \right)\,. 
\ee
For the second summand of (\ref{eq>sommandi}) we observe that $N>T$, so Proposition \ref{prop:exp0} gives
\be\label{eq:pettine3}
\tilde\g_{\a,N}(|G_N-G_T|\geq t/2)\leq \g_{\a,N}(|G_N-G_T|\geq t/2)\leq Ce^{-ct^{\frac13}T^{\frac{2\a-1}3}}
\leq Ce^{-c t^{\frac13+\frac{2\a-1}{3(1-\a)}}}\,. 
\ee

Combining that with \eqref{eq:pettine2}, \eqref{eq>sommandi}, \eqref{eq:pettine1} gives \eqref{eq:finaltail} for $\a\neq1$.

Finally we take $\a=1$. Consider $t\leq N^\e$ for some $\e>0$, set $T:=\lfloor t^\frac{1}{\e}\rfloor$. We bound again as in \eqref{eq>sommandi}. 
Since $t\geq T^{\e}$
\be\label{eq:pettine11}
\tilde\g_{\a,N}(|G_T|\geq t/2)\lesssim 2\exp\left(- c(R) t^{\frac{4}{3}}\right)\,. 
\ee
Since $N>T$, by Proposition \ref{prop:exp0} we get
\be\label{eq:pettine3}
\tilde\g_{\a,N}(|G_N-G_T|\geq t/2)\leq \g_{\a,N}(|G_N-G_T|\geq t/2)\leq Ce^{-ct^{\frac13}T^{\frac{2\a-1}3}}\leq Ce^{-c(R)t^{\frac13+\frac{1}{3\e}}}\,. 
\ee

Combining that with \eqref{eq:pettine}, \eqref{eq:pettine11} and taking $\e$ small enough gives \eqref{eq:finaltail} for $\a=1$.
\end{proof}

The next result also follows by the considerations in this section.

\begin{proposition}\label{prop:exp-mom-L4}
Let $\a>3/4$ and $\l\in\R$. 
The quantity $E_\a[1_{\{\|\Pi_N u\|_{L^2}\leq R\}}e^{\l\|\Pi_Nu\|^4_{L^4}}]$ is bounded uniformly in $N$. 
\end{proposition}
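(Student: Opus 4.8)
The plan is to reduce to the focusing case $\l>0$ (for $\l\leq0$ one has $e^{\l\|\Pi_N u\|^4_{L^4}}\leq1$, so the quantity is at most $E_\a[1_{\{\|\Pi_N u\|_{L^2}\leq R\}}]\leq1$), and then to run the classical Lebowitz--Rose--Speer/Bourgain argument, which here is almost entirely encapsulated in Lemma~\ref{lemma:subexpHs}. Fix a Sobolev exponent $s$ with $\tfrac14<s<\a-\tfrac12$; such an $s$ exists \emph{precisely} because $\a>\tfrac34$. Since $H^{s}(\T)\hookrightarrow L^{4}(\T)$ for $s>\tfrac14$, there is $C_s>0$ with $\|\Pi_N u\|^4_{L^4}\leq C_s\|\Pi_N u\|^4_{H^s}$ for every $N$, so that, writing $\l':=\l C_s$,
\[
1_{\{\|\Pi_N u\|_{L^2}\leq R\}}\,e^{\l\|\Pi_N u\|^4_{L^4}}\leq 1_{\{\|\Pi_N u\|_{L^2}\leq R\}}\,e^{\l'\|\Pi_N u\|^4_{H^s}}.
\]

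Next I would use the layer-cake identity $e^{\l'X}=1+\l'\int_0^\infty e^{\l'\tau}1_{\{X>\tau\}}\,d\tau$ with $X:=\|\Pi_N u\|^4_{H^s}\geq0$, together with $E_\a[1_{\{\|\Pi_N u\|_{L^2}\leq R\}}]\leq1$, to get
\[
E_\a\big[1_{\{\|\Pi_N u\|_{L^2}\leq R\}}\,e^{\l\|\Pi_N u\|^4_{L^4}}\big]\leq1+\l'\int_0^\infty e^{\l'\tau}\,\tilde\g_{\a,N}\!\big(\|\Pi_N u\|_{H^s}\geq\tau^{1/4}\big)\,d\tau .
\]
By Lemma~\ref{lemma:subexpHs} (legitimate since $s<\a-\tfrac12$) one has $\tilde\g_{\a,N}(\|\Pi_N u\|_{H^s}\geq\tau^{1/4})\leq\exp\!\big(-c(R)\,\tau^{\a/(2s)}\big)$, with $c(R)$ independent of $N$. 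The crucial point is that $\tfrac{\a}{2s}>1$: indeed $2s<2(\a-\tfrac12)=2\a-1\leq\a$ because $\a\leq1$. Hence $\int_0^\infty e^{\l'\tau}\exp(-c(R)\tau^{\a/(2s)})\,d\tau<\infty$, and this bound does not depend on $N$, which is the assertion.

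In this scheme there is no real obstacle once Lemma~\ref{lemma:subexpHs} is available: the entire content is that, \emph{on the $L^2$-ball}, $\|\Pi_N u\|_{H^s}$ has a stretched-exponential tail with exponent $\a/(2s)$ strictly larger than $1$, which is exactly what lets it absorb the exponential weight $e^{\l'\tau}$ for an arbitrary $\l>0$; had the exponent been $\leq1$ (as happens, e.g., without the $L^2$ cutoff) the argument would break down. The hypothesis $\a>\tfrac34$ enters only through the requirement that the window $(\tfrac14,\a-\tfrac12)$ for $s$ be non-empty --- the lower endpoint $\tfrac14$ coming from the embedding $H^s\hookrightarrow L^4$, the upper one from the concentration of $\g_\a$ --- while $\a\leq1$ is what guarantees $s<\a/2$, i.e. $\a/(2s)>1$. (If one wanted the statement also for $N=\infty$ it would follow by Fatou's lemma along $\Pi_N u\to u$ in $L^2$, or by rerunning the same argument, Lemma~\ref{lemma:subexpHs} being uniform in $N\in\N\cup\{\infty\}$.)
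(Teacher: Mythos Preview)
Your argument is correct and is actually cleaner than the paper's. The paper proceeds via the Fourier--Lebesgue norm: it uses $\|\Pi_N u\|_{L^4}^4\leq R^2\|\Pi_N u\|_{FL^{0,1}}^2$ on the $L^2$-ball together with Lemma~\ref{lemma:subexpFL}, but since that tail bound is only valid for $t\gtrsim N^{1-\a}$ (or $t\gtrsim(\log N)^2$ when $\a=1$), an extra splitting at $T=\lfloor t^{1/(1-\a)}\rfloor$ and an appeal to the Wick-type estimate of Proposition~\ref{prop:exp0} are needed to close the argument for small $t$. You bypass all of this by going through $H^{s}$ with $s\in(\tfrac14,\a-\tfrac12)$: Lemma~\ref{lemma:subexpHs} holds for all $t>0$ with no lower threshold, so a single layer-cake computation suffices. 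The key numerology --- that $\a/(2s)>1$ whenever $s<\a-\tfrac12$ and $\a\leq1$ --- is exactly what makes this shortcut available, and you identify it correctly. The paper's route has the (minor) advantage of reusing machinery already built for Proposition~\ref{prop:Lpbound}, but for this proposition in isolation your approach is both shorter and more transparent about where the hypothesis $\a>\tfrac34$ enters.
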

\begin{proof}
Set
$$
L_N:=\begin{cases}
N^{1-\a}&\a\neq1\\
(\log N)^2&\a=1\,.
\end{cases}
$$
We write
\bea
E_\a[1_{\{\|\Pi_N u\|_{L^2}\leq R\}}e^{\l\|\Pi_Nu\|^4_{L^4}}]
&=&\int_0^{\infty}dte^{t\l}\tilde\g_{\a,N}\left(\|\Pi_Nu\|^4_{L^4}\geq t\right)\nn\\
&=&1+\int_1^{L_{\a,N}}dt e^{t\l}\tilde\g_{\a,N}\left(\|\Pi_Nu\|^4_{L^4}\geq t\right)\label{eq:eis}\\
&+&\int_{L_{\a,N}}^{\infty}dt e^{t\l}\tilde\g_{\a,N}\left(\|\Pi_Nu\|^4_{L^4}\geq t\right)\,.\label{eq:zwoi}
\eea
By Lemma \ref{lemma:subexpFL}
\bea
\eqref{eq:zwoi}&\leq& \int_{L_{\a,N}}^{\infty}dte^{t\l}\tilde\g_{\a,N}\left(R^2\|\Pi_Nu\|^2_{FL^{0,1}}\geq t\right)\nn\\
&\leq& 2\int_{L_{\a,N}}^{\infty}dt\exp\left(\l t- c(R) t^{1+\a}\right)<\infty. 
\eea
Set $T:=\lfloor t^\frac{1}{1-\a}\rfloor$ for $\a\neq1$ or $T:=\lfloor t^\frac{1}{\e}\rfloor$ for some $\e>0$ if $\a=1$. We bound
\bea
\eqref{eq:eis}&\leq& 1+\int_1^{L_{\a,N}}dte^{\l t}\tilde\g_{\a,N}\left(\|\Pi_Tu\|^4_{L^4}\geq t/2\right)\label{eq:dri}\\
&+&\int_1^{L_{\a,N}}dte^{\l t}\tilde\g_{\a,N}\left(\left|\|\Pi_Nu\|^4_{L^4}-\|\Pi_Tu\|^4_{L^4}\right|\geq t/2\right)\label{eq:fier}\,. 
\eea

Since $t\geq L_{\a,T}$ again by Lemma \ref{lemma:subexpFL} we have
\be
\eqref{eq:dri}\leq 1+\int_1^{L_{\a,N}}dt\tilde\g_{\a,N}\left(R^2\|\Pi_Nu\|^2_{FL^{0,1}}\geq t\right)\leq 1+2\int_{1}^{\infty}dt\exp\left( \l t-c(R) t^{1+\a}\right)<\infty\,. 
\ee
It remains to bound \eqref{eq:fier}. The same proof of Proposition \ref{prop:exp0} gives for $\a\neq1$
\be
\eqref{eq:fier}\leq C\int_1^{\infty}dte^{\l t-t^{\frac13} T^{\frac{2\a-1}{3}}}\leq C\int_1^{\infty}dte^{\l t-t^{\frac13+\frac{2\a-1}{3(1-\a)}}} <\infty\,
\ee
(since $\a>3/4$) and for $\a=1$
\be
\eqref{eq:fier}\leq C\int_1^{\infty}dte^{\l t-t^{\frac13} T^{\frac{2\a-1}{3\e}}}\leq C\int_1^{\infty}dte^{\l t-t^{\frac13+\frac{2\a-1}{3\e}}}<\infty\,
\ee
for $\e$ sufficiently small (these estimates are loose but sufficient to our end). This ends the proof.
\end{proof}

\section{Quasi-invariance}\label{sect:quasi}

The main result of this section is the following.

\begin{proposition}\label{prop:quasi-inv}
Let $\alpha \in( \bar{\alpha} ,1]$, $\bar{\alpha} := \frac{1+ \sqrt{97}}{12} \sim 0.9$ and $t \in [-1,1]$. There exists $\bar f(t,\cdot)\in L^p(\r_\a)$ for all $p \geq 1$,
such that for any measurable set $A$
\be\label{eq:quasi-inv}
\r_\a(\Phi_t (A))=\int_A\bar f(t,u) \r_\a(du)\,.
\ee
In other words, we have
\be\label{eq:prop_res2}
\bar f(t, \cdot) := \frac{d(\r_\a\circ\Phi_t)}{d\r_\a} \in L^p(\r_\a)\,. 
\ee
for the Radon-Nikodim derivative $\frac{d(\r_\a \circ\Phi_t)}{d\r_\a}$ of $\r_\a\circ\Phi_t$ w.r.t. $\r_\a$.  

\end{proposition}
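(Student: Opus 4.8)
The plan is to transfer the statement from the finite‑dimensional truncations, where it reduces to Liouville's theorem, and then pass to the limit using the uniform $L^p$ bounds furnished by Proposition~\ref{prop:Lpbound}. \emph{Step 1: the finite‑dimensional change of variables.} Set $\rho_{\alpha,N}(du):=e^{-\frac\sigma2\|\Pi_N u\|_{L^4}^4}\tilde\gamma_{\alpha}(du)$; its restriction to $E_N$ has Lebesgue density proportional to $\exp\big(-H^{(\alpha)}(\Pi_N u)-\|\Pi_N u\|_{L^2}^2\big)1_{\{\|u\|_{L^2}\leq R\}}$ (recall \eqref{Hamiltoniana}, \eqref{Def:gammaK}). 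Since $\Phi_t^N$ is the Hamiltonian flow of $\mathcal F_N$ it preserves the Lebesgue measure on $\mathcal E_N$ (Liouville), and since $\mathcal F_N$ in \eqref{formula cal FN} only couples indices with $n_1+n_2=m_1+m_2$ it is invariant under $u\mapsto e^{i\theta}u$, so $\|\Pi_N\Phi_t^N(u)\|_{L^2}=\|\Pi_N u\|_{L^2}$ and $\Phi_t^N$ maps $\{\|u\|_{L^2}\leq R\}$ onto itself. Performing the substitution $v=\Phi_t^N(u)$ in $\rho_{\alpha,N}(\Phi_t^N(A))$, only the factor $e^{-H^{(\alpha)}}$ moves, and using \eqref{eq:def-GN} with the group property one obtains, for every cylindrical measurable $A$,
\begin{equation}\label{eq:QI-findim}
\rho_{\alpha,N}(\Phi_t^N(A))=\int_A f_N(t,u)\,\rho_{\alpha,N}(du),\qquad f_N(t,u):=\exp\Big(-\int_0^t G_N(\Phi_s^N(u))\,ds\Big),
\end{equation}
equivalently $(\Phi_s^N)_*\rho_{\alpha,N}=f_N(-s,\cdot)\,\rho_{\alpha,N}$.

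\emph{Step 2: uniform $L^p$ bounds, following \cite{sigma}.} I would show $\sup_N\sup_{|t|\leq1}\|f_N(t,\cdot)\|_{L^p(\rho_\alpha)}<\infty$ for all $p\geq1$. By the tail bound \eqref{eq:finaltail} — whose exponent $\zeta(\alpha)>1$ is the crux of Proposition~\ref{prop:Lpbound} — together with Proposition~\ref{prop:exp-mom-L4} and H\"older, the constants $C_q:=\sup_N\mathbb E_{\rho_{\alpha,N}}\!\big[e^{q|G_N|}\big]$ are finite for every $q$, since $e^{qt}$ is beaten by $e^{-c(R)t^{\zeta(\alpha)}}$. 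For $|t|\leq1$, Jensen applied to $s\mapsto -p\,G_N(\Phi_s^N(u))$ gives $f_N(t,u)^p\leq\int_0^1 e^{p|G_N(\Phi_s^N(u))|}\,ds$; integrating against $\rho_{\alpha,N}$, using $(\Phi_s^N)_*\rho_{\alpha,N}=f_N(-s,\cdot)\rho_{\alpha,N}$ to bring $G_N(\Phi_s^N(\cdot))$ back at the cost of a factor $f_N(-s,\cdot)$, and Cauchy--Schwarz,
\begin{equation}\label{eq:QI-boot}
\mathbb E_{\rho_{\alpha,N}}\!\big[f_N(t,\cdot)^p\big]\leq\int_0^1\mathbb E_{\rho_{\alpha,N}}\!\big[e^{p|G_N|}f_N(-s,\cdot)\big]\,ds\leq C_{2p}^{1/2}\,\Theta_N^{1/2},\qquad\Theta_N:=\sup_{|s|\leq1}\mathbb E_{\rho_{\alpha,N}}\!\big[f_N(s,\cdot)^2\big].
\end{equation}
For fixed $N$, $\Theta_N<\infty$ ($f_N(t,\cdot)$ is continuous on the compact ball $\{\|\Pi_N u\|_{L^2}\leq R\}$), so taking $p=2$ in \eqref{eq:QI-boot} gives $\Theta_N\leq C_4^{1/2}\Theta_N^{1/2}$, hence $\Theta_N\leq C_4$ uniformly in $N$; plugging this back into \eqref{eq:QI-boot} controls $\mathbb E_{\rho_{\alpha,N}}[f_N(t,\cdot)^p]$ uniformly in $N$ and $|t|\leq1$ for all $p$. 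Comparing $\rho_{\alpha,N}$ with $\rho_\alpha$ by one further application of Proposition~\ref{prop:exp-mom-L4} and H\"older then yields the claim.

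\emph{Step 3: passage to the limit.} Fix $t\in[-1,1]$. By Step~2 and reflexivity of $L^p(\rho_\alpha)$, a diagonal extraction gives $\bar f(t,\cdot)\in\bigcap_{p\geq1}L^p(\rho_\alpha)$ and a subsequence along which $f_N(t,\cdot)\rightharpoonup\bar f(t,\cdot)$ weakly in every $L^p(\rho_\alpha)$, with $\|\bar f(t,\cdot)\|_{L^p(\rho_\alpha)}$ bounded by the constant of Step~2. To identify $\bar f(t,\cdot)$ I would pass to the limit in \eqref{eq:QI-findim} along this subsequence for $A$ a compact subset of one of the sets $K$ on which the globalisation result of Proposition~\ref{prop:alabourgain} guarantees $\sup_N\sup_{|t|\leq1}\|\Phi_t^N(u)\|_{H^{s}}\leq R_K$ with $s>2-2\alpha$; by Proposition~\ref{ApproxThm} and Lemma~\ref{lemma:recall} (respectively Lemma~\ref{lemma:recall0} when $\alpha=1$), $\Phi_t$ is then defined on all of $K$ for $t\in[-1,1]$ and $\Phi_t^N\to\Phi_t$ uniformly on $K$ in $H^{s'}$, $s'<s$. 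On the right‑hand side, $\rho_{\alpha,N}\to\rho_\alpha$ in total variation (by Proposition~\ref{prop:exp-mom-L4} and dominated convergence, using $\|\Pi_N u\|_{L^4}\to\|u\|_{L^4}$ $\tilde\gamma_\alpha$‑a.s., valid for $\alpha>3/4$) and the uniform $L^p$ bound allow one to replace $\rho_{\alpha,N}$ by $\rho_\alpha$ and use weak convergence against $1_A$, giving $\int_A\bar f(t,u)\,\rho_\alpha(du)$ in the limit; on the left‑hand side, total‑variation convergence together with $\rho_\alpha\big(\Phi_t^N(A)\triangle\Phi_t(A)\big)\to0$ — a consequence of uniform convergence on the compact $A$ and of $\bigcap_{\varepsilon>0}\big(\Phi_t(A)+B_{s'}(\varepsilon)\big)=\Phi_t(A)$ — gives $\rho_{\alpha,N}(\Phi_t^N(A))\to\rho_\alpha(\Phi_t(A))$. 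Thus $\rho_\alpha(\Phi_t(A))=\int_A\bar f(t,u)\,\rho_\alpha(du)$ for such $A$; since these sets generate the Borel $\sigma$‑algebra on a full‑measure set and $\Phi_t$ is defined $\rho_\alpha$‑a.e., inner regularity of $\rho_\alpha$ extends the identity to all measurable $A$, and uniqueness of the Radon--Nikodym derivative shows the whole sequence converges with $\bar f(t,\cdot)=d(\rho_\alpha\circ\Phi_t)/d\rho_\alpha$, as in \cite{BBM1}.

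\emph{Main obstacle.} The analytic heart is the self‑improving estimate \eqref{eq:QI-boot}: it closes only because the tail of $G_N$ decays like $e^{-c(R)t^{\zeta(\alpha)}}$ with $\zeta(\alpha)>1$, i.e. only for $\alpha>\bar\alpha$ — a merely exponential tail would leave an uncontrolled factor $e^{p|G_N|}$. The second delicate point is the soft but not entirely routine measure‑theoretic passage of Step~3 from compact subsets of $H^s$ to arbitrary measurable sets, which relies on the global well‑posedness of $\Phi_t$ and the uniform approximation $\Phi_t^N\to\Phi_t$ of Sections~\ref{section:flowNLS}--\ref{section:flow-fract}.
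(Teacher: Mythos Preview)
Your proposal is essentially correct but follows a genuinely different route from the paper, and one step in your limiting argument is not quite justified as written.

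\textbf{Comparison with the paper.} The paper first proves the quantitative quasi-invariance estimate of Proposition~\ref{prop_res1} via Tzvetkov's differential inequality: one differentiates $t\mapsto\rho_{\alpha,N}(\Phi_t^N(A))$, bounds the derivative by $\|G_N\|_{L^p}\,\rho_{\alpha,N}(\Phi_t^N(A))^{1-1/p}$ using~\eqref{eq:Lpestimate}, integrates the resulting ODE, and optimises in $p$. This yields $\rho_\alpha(\Phi_t^N(A))\lesssim\rho_\alpha(A)^{1-\varepsilon}$, from which absolute continuity and the $L^p$ bound on $\bar f$ both follow after passing to the limit via the one-sided inclusion of Lemma~\ref{lemma:recall}. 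Your Step~2 bypasses the differential inequality entirely: you exploit $\zeta(\alpha)>1$ to obtain finite exponential moments $C_q=\sup_N\mathbb E_{\rho_{\alpha,N}}[e^{q|G_N|}]<\infty$ directly, and then close a bootstrap via Jensen and the change of variables $(\Phi_s^N)_*\rho_{\alpha,N}=f_N(-s,\cdot)\rho_{\alpha,N}$. This is cleaner and more self-contained (it gives the uniform $L^p$ bound on $f_N$ without passing through the set-function estimate), at the price of requiring the explicit density $f_N$; the paper's argument is more robust in settings where one only controls $\|G_N\|_{L^p}$ polynomially in $p$ without super-exponential tails. Note also that your Step~2, once established, immediately recovers Proposition~\ref{prop_res1} via H\"older: $\rho_{\alpha,N}(\Phi_t^N(A))=\int_A f_N\,d\rho_{\alpha,N}\leq\|f_N\|_{L^p}\rho_{\alpha,N}(A)^{1-1/p}$.

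\textbf{The imprecision in Step~3.} Your claim that $\rho_\alpha(\Phi_t^N(A)\triangle\Phi_t(A))\to0$ follows ``from uniform convergence on the compact $A$ and $\bigcap_\varepsilon(\Phi_t(A)+B_{s'}(\varepsilon))=\Phi_t(A)$'' only handles $\Phi_t^N(A)\setminus\Phi_t(A)$. The reverse inclusion $\Phi_t(A)\setminus\Phi_t^N(A)$ is not controlled this way: Hausdorff proximity of $\Phi_t^N(A)$ and $\Phi_t(A)$ does not by itself bound the measure of their difference. The fix is to use \emph{both} inclusions separately. From $\Phi_t^N(A)\subset\Phi_t(A)+B_{s'}(\varepsilon_N)$ one gets $\limsup_N\rho_{\alpha,N}(\Phi_t^N(A))\leq\rho_\alpha(\Phi_t(A))$, hence $\int_A\bar f\,d\rho_\alpha\leq\rho_\alpha(\Phi_t(A))$. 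From Lemma~\ref{lemma:recall} (or~\ref{lemma:recall0}), $\Phi_t(A)\subset\Phi_t^N(A+B_{s'}(\varepsilon_N))$, so $\rho_{\alpha,N}(\Phi_t(A))\leq\int_{A+B_{s'}(\varepsilon_N)}f_N\,d\rho_{\alpha,N}$; the contribution of $(A+B_{s'}(\varepsilon_N))\setminus A$ vanishes by H\"older and your uniform $L^p$ bound, giving $\rho_\alpha(\Phi_t(A))\leq\int_A\bar f\,d\rho_\alpha$. This is exactly the mechanism of the paper's proof, now fed by your Step~2 rather than by Proposition~\ref{prop_res1}.
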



Let 
$$
\r_{\a, N}(du):=e^{-\frac{\sigma}2\| \Pi_N u\|_{L^4}^4}\tilde\g_{\a,N}(du)\,. 
$$
The convergence $\r_{\a, N}\to\r_\a$ was proven in \cite{LRS, B94}. In particular we have that for any measurable set $A$ for every $\e>0$ there is $\bar N\in\N$ such that for all $N>\bar N$
\be\label{eq"usami}
|\r_\a(A)-\r_{\a, N}(A)|<\e\,. 
\ee


We also set
$\g_{N}^{\perp}$ the Gaussian measure induced by the random Fourier series
$$
\sum_{|n| > N} \frac{g_n}{(1+|n|^{2})^{\frac{1}{2}}}\, e^{inx} \, .
$$
We define the Lebesgue measure on $\C^N \simeq \R^{2N} $ as 
$$
L_N(d \Pi_N u) = \prod_{|n| \leq N} d (\Re u(n))  d (\Im u(n))
$$
using the standard isomorphism between $u$ and its Fourier coefficients.  Since the flow is Hamiltonian we have $L_N(\Pi_N \Phi_1^N (E)) = L_N(E)$.

We start by proving the quasi-invariance of $\r_\a$ w.r.t. the truncated flow, which is defined for all $t$. 
\begin{proposition}\label{prop_res1}
Let $\alpha \in( \bar{\alpha} ,1]$, $\bar{\alpha} := \frac{1+ \sqrt{97}}{12} \sim 0.9$
and recall $\zeta(\a)>1$ defined in (\ref{eq:zeta}). We have  
\begin{equation}\label{QuantQiPreq}
\rho_\a (\Phi^N_t (A)) \lesssim \left( \rho_\a(A) \right)^{1-\varepsilon}
\exp\left( C(R, \varepsilon) \left( 1 + |t|^{\frac{\zeta(\a)}{\zeta(\a) -1}} \right) \right)\,
\end{equation}
for all $N \in \N$ and $\varepsilon >0$.
\end{proposition}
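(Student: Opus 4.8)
\textbf{Proof plan for Proposition \ref{prop_res1}.}
The strategy is the standard Tzvetkov change-of-variables-plus-energy-estimate scheme adapted to the truncated Birkhoff flow $\Phi_t^N$, which is a genuine finite-dimensional Hamiltonian flow on $\mathcal E_N$, so that Liouville's theorem applies: $L_N(\Pi_N \Phi_t^N(E)) = L_N(E)$. First I would reduce to computing $\rho_{\a,N}(\Phi_t^N(A))$, since by \eqref{eq"usami} the measures $\r_{\a,N}$ approximate $\r_\a$ and the left-hand side of \eqref{QuantQiPreq} only depends on the $E_N$-marginal of $\Phi_t^N(A)$ because $\Phi_t^N$ acts as the identity on $\Pi_N^\perp u$; thus $\r_\a(\Phi_t^N(A)) = \int \r_{\a,N}(\Phi_t^N(A)_{\omega})\, \g_N^\perp(d\omega)$ and it suffices to bound $\r_{\a,N}(\Phi_t^N(B))$ for a fixed finite-dimensional measurable $B$, uniformly. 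On $E_N$ the measure $\r_{\a,N}$ has an explicit density against Lebesgue $L_N$, namely $\r_{\a,N}(du) = Z_N^{-1} e^{-H^{(\a)}[\Pi_N u]}\, L_N(du)$ (up to the $L^2$-ball cutoff, which is $\Phi_t^N$-invariant since $\Phi_t^N$ preserves $\|\Pi_N u\|_{L^2}$). Hence, changing variables $v = (\Phi_t^N)^{-1}w$ and using Liouville,
\be
\r_{\a,N}(\Phi_t^N(B)) = Z_N^{-1}\int_B e^{-H^{(\a)}[\Pi_N \Phi_t^N(v)]}\, L_N(dv) = \int_B e^{-(H^{(\a)}[\Pi_N \Phi_t^N(v)] - H^{(\a)}[\Pi_N v])}\, \r_{\a,N}(dv)\,.
\ee

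Next I would control the energy increment along the flow. Writing $Q_t(u) := H^{(\a)}[\Pi_N \Phi_t^N(u)] - H^{(\a)}[\Pi_N u] = \int_0^t \frac{d}{ds} H^{(\a)}[\Pi_N \Phi_s^N(u)]\, ds$, the group property gives $\frac{d}{ds} H^{(\a)}[\Pi_N \Phi_s^N(u)] = G_N(\Phi_s^N(u))$ with $G_N$ as in \eqref{eq:def-GN}. Using the $L^2$-ball invariance, one applies the $L^p(\tilde\g_{\a,N})$-bound of Proposition \ref{prop:Lpbound}, $\|G_N\|_{L^p(\tilde\g_{\a,N})} \lesssim_R p^{1/\zeta(\a)}$; the key point — and the one requiring the quasi-invariance of $\tilde\g_{\a,N}$ under the linear flow or, more simply here, the invariance of Lebesgue measure — is that the push-forward $(\Phi_s^N)_\ast \tilde\g_{\a,N}$ has density $e^{-Q_s}$ times $\tilde\g_{\a,N}$, so a Grönwall/bootstrap on $\Lambda_N(t) := \log \int e^{Q_t}\, \tilde\g_{\a,N}$ closes: differentiating, $\Lambda_N'(t) \lesssim \| G_N \|_{L^p}$ with $p$ tied to $\Lambda_N(t)$ via the elementary inequality $\int e^{|F|}\, d\mu \leq \exp(C\|F\|_{L^p}^{\zeta'})$ valid when $\|F\|_{L^p}\lesssim p^{1/\zeta}$, where $\zeta' = \zeta/(\zeta-1)$. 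Integrating this differential inequality from $0$ to $|t|$ produces the bound $\int e^{|Q_t|}\, \tilde\g_{\a,N} \leq \exp(C(R)(1 + |t|^{\zeta(\a)/(\zeta(\a)-1)}))$, and the same for $\r_{\a,N}$ since its density against $\tilde\g_{\a,N}$ is bounded on the $L^2$-ball (in the defocusing case trivially, in the focusing case by Proposition \ref{prop:exp-mom-L4}).

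Finally I would combine the two ingredients via Hölder: from the change-of-variables identity,
\be
\r_{\a,N}(\Phi_t^N(B)) = \int_B e^{-Q_t(v)}\, \r_{\a,N}(dv) \leq \Big( \int_B e^{-p' Q_t}\, \r_{\a,N} \Big)^{1/p'} \r_{\a,N}(B)^{1/q'}
\ee
with $\frac{1}{p'} + \frac{1}{q'} = 1$; choosing $q' = 1/(1-\varepsilon)$, i.e. $p' = 1/\varepsilon$, and using that $\int e^{-Q_t/\varepsilon}\, \r_{\a,N} \leq \exp(C(R,\varepsilon)(1 + |t|^{\zeta(\a)/(\zeta(\a)-1)}))$ — which follows from the previous step after replacing $Q_t$ by $-Q_t/\varepsilon$ (the tail bound \eqref{eq:finaltail} is two-sided up to a harmless constant, since $-G_N$ satisfies the same estimate) — yields
\be
\r_{\a,N}(\Phi_t^N(B)) \leq \r_{\a,N}(B)^{1-\varepsilon}\, \exp\big( C(R,\varepsilon)(1 + |t|^{\frac{\zeta(\a)}{\zeta(\a)-1}}) \big)\,.
\ee
Integrating over the transversal Gaussian $\g_N^\perp$ and sending the approximation parameter in \eqref{eq"usami} away gives \eqref{QuantQiPreq}. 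The main obstacle is the bootstrap controlling $\Lambda_N(t)$: one must make sure the implied constants are uniform in $N$ (guaranteed by Proposition \ref{prop:Lpbound} holding for $N \in \N \cup \{\infty\}$), that the time-dependence genuinely comes out as $|t|^{\zeta/(\zeta-1)}$ rather than something worse (this is exactly what the scaling $\|G_N\|_{L^p} \lesssim p^{1/\zeta}$ with $\zeta > 1$ buys, via the Legendre-type duality between $p^{1/\zeta}$ growth and $\exp(c\, x^{\zeta})$ tails), and that the $L^2$-cutoff is respected at every step, which it is because $\Phi_t^N$ preserves $\|\Pi_N u\|_{L^2}$.
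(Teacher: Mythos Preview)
Your overall architecture (change variables using Liouville, then control the energy increment $Q_t$, then H\"older to get the $\rho_\a(A)^{1-\varepsilon}$ factor) is sound and is indeed one of the standard routes in the literature. However, the paper takes a more direct path that sidesteps exactly the step where your argument is underspecified.

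The paper does \emph{not} try to bound an exponential moment $\int e^{-Q_t/\varepsilon}\,d\r_{\a,N}$ and then apply H\"older. Instead it differentiates $y(t):=\r_{\a,N}(\Phi_t^N(A))$ directly and uses the group property to write
\[
y'(t^*) \;=\; -\int_{\Phi_{t^*}^N(A)} G_N\,d\r_{\a,N}\,.
\]
The point is that the flow has been absorbed into the \emph{domain} of integration, so that only the bare $\|G_N\|_{L^p(\r_{\a,N})}$ appears after H\"older, never $\|G_N\circ\Phi_s^N\|_{L^p}$. This gives $|y'(t)|\lesssim p^{1/\zeta(\a)} y(t)^{1-1/p}$, i.e.\ a closed ODE for $y(t)^{1/p}$. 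Integrating, then choosing $p=\log(1/(2\rho_\a(A)))$ and applying Young's inequality $ab\leq \varepsilon a^{\zeta}+C_\varepsilon b^{\zeta'}$ produces the $\rho_\a(A)^{1-\varepsilon}\exp(C|t|^{\zeta/(\zeta-1)})$ shape directly.

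Your ``Gr\"onwall/bootstrap on $\Lambda_N(t)$'' hides a genuine circularity. Differentiating $\int e^{\lambda Q_t}\,d\r_{\a,N}$ produces $\int G_N(\Phi_t^N(u))\,e^{\lambda Q_t}\,d\r_{\a,N}$, and to estimate this you need either $\|G_N\circ\Phi_t^N\|_{L^p(\r_{\a,N})}$ or a change of variables that reintroduces a factor $e^{-Q_{-t}}$; in both cases you are back to controlling exactly the exponential moment you are trying to bound (with a different sign or exponent). This \emph{can} be made to work, but it typically requires first establishing the quasi-invariance inequality \eqref{QuantQiPreq} by the paper's method and only then deducing the exponential-moment bound --- this is precisely how the density analysis in Section~\ref{sect:density} (Lemma~\ref{prop:UI}) is organized, using Proposition~\ref{prop_res1} as input rather than as output. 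So your proposal inverts the logical order of the paper: the Tzvetkov differential inequality on $\r_{\a,N}(\Phi_t^N(A))$ is the primitive ingredient, and the exponential moments come afterwards.
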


\begin{proof}[Proof of Proposition \ref{prop_res1}]
We compute
\begin{align}
& \r_{\alpha, N}( \Phi_t^N (A))  = 
\int_{\Phi_t^N (A)} e^{-\frac{\sigma}2\| \Pi_N u\|_{L^4}^4} \tilde\g_{\a}(du)  \\ \nonumber
& = \int_{\Phi_t^N (A) \cap B(R)}  L_N (d \Pi_N  u)   \g_{\a, N}^{\perp} (d P_{>N} u)
  \exp \left( - H^{(\a)} [\Pi_N  u]   \right)  
\\ \nonumber
& 
= \int_{A \cap B(R)}  L_N (d \Pi_N  u)   \g_{\a, N}^{\perp} (d P_{>N} u)
  \exp \left( - H^{(\a)} [\Pi_N \Phi_t^N(u)  ]   \right)  \\ \nonumber
\end{align}
Taking the derivative in time and evaluating it at $t =0$ we get
\begin{align}\label{STTG}
\frac{d}{dt} \r_{\alpha, N}( \Phi_t^N (A)) \Big|_{t=0} 
& = -
\int_{A \cap B(R)}  L_N (d \Pi_N  u)   \g_{\a, N}^{\perp} (d P_{>N} u)
  \exp \left( - H^{(\a)} [\Pi_N  u  ]   \right) \left(\frac{d}{dt} H^{(\a)} [\Pi_N \Phi_t^N(u)  ]_{t=0} \right)
\\ \nonumber
& =
- \int_{A }  \rho_{\a,N}(d u)   
G_N (u)
\end{align}
where (recall \eqref{MainIdentity} and \eqref{eq:def-GN})
$$
G_N(u) = - \frac{\sigma}2\|\Pi_Nu\|^4_{L^4}+ \sigma \|\Pi_Nu\|^4_{L^2}+ \frac{\sigma}2 \{\|\Pi_Nu\|^4_{L^4},{\mathcal F}_N\}\,. 
$$
Since $t \in (\mathbb{R}, +) \to \Phi_t^N$
is a one parameter group of transformations, we can easily check that
\begin{equation}\label{EqualityAtT=0}
\frac{d}{d t} \left( \r_{\alpha, N}  \circ \Phi_t^N (A) \right)\Big|_{t=t^*} = 
\frac{d}{d t}\left( \r_{\alpha, N} \circ \Phi_t^N (\Phi_{t^*}^N (A) ) \right) \Big|_{t=0} \, .
\end{equation}
Using \eqref{STTG} and \eqref{EqualityAtT=0} 
under the choice $E= \Phi_{t}^N A$, we arrive to
$$
\frac{d}{d t}  \left( \r_{\a, N} \circ \Phi_t^N (A) \right)\Big|_{t=t^*}
= - \int_{\Phi_{t^*}^N A }  \rho_{\a,N}(d u)   
G_N (u).
$$
Thus combining the H\"older inequality and \eqref{eq:Lpestimate} we get
\begin{equation}\label{QuantQiPreq2}
\left| \frac{d}{d t}  \left( \r_{\a, N} \circ \Phi_t^N (A) \right)\Big|_{t= t^*} \right| 
\leq  \| G_N  \|_{L^{p}} \r_{\a, N}(\Phi_{t^*}^N (A))^{1-\frac{1}{p}} \lesssim_R  
\r_{\a, N}(\Phi_{t^*}^N (A))^{1-\frac{1}{p}} p^{\frac{1}{\zeta(\a)}}\,,
\end{equation}
whence
$$
 \frac{d}{d t}  \left( \left( \r_{\a, N} \circ \Phi_t^N (A) \right) \right)^{1/p}
\lesssim_R p^{\frac{1}{\zeta(\a)}-1}\,,   
$$
whence
\be\label{eq:quasi-invN}
\r_{\a, N}(\Phi_t^N (A))\leq \r_{\a, N}(A)\exp\left(p\log(1+ C(R)|t| p^{\frac{1}{\zeta(\a)}-1}(\r_{\a, N}(A))^{-\frac1p})\right).
\ee

We can now show (\ref{QuantQiPreq}). We may and will assume $\r(A)>0$.  
Consider
\be\label{eq:p}
p=\log \left( \frac{1}{2\r(A)} \right)\,,
\ee
and note that
\be\label{eq:p-consequence}
(2\r(A))^{-\frac1p} = e\,. 
\ee
By \eqref{eq"usami} we have that there is $\bar N = \bar N(A)$ such that $\r_{\a, N}(A)\leq 2\r(A)$ for all $N>\bar N$. 
Thus, for sufficiently large $N$ (\ref{eq:quasi-invN}) reads
\bea\label{STG}
\r_{\a, N}(\Phi_t^N (A))&\leq&2 \rho_\a(A)\exp\left(p\log(1+C(R)|t| p^{\frac{1}{\zeta(\a)}-1}(2\rho_\a(A))^{-\frac1p})\right)\nn\\
&=&2\rho_\a(A)\exp\left(p\log(1+C(R)|t| p^{\frac{1}{\zeta(\a)}-1})\right)\nn\\
&\leq&2\rho_\a(A)\exp\left(C(R) |t| p^{\frac{1}{\zeta(\a)}}\right)\,\nn\\
&\leq&2\rho_\a(A)\exp\left(C(R) |t| \left( \ln \left( \frac{1}{2\r(A)} \right) \right)^{\frac{1}{\zeta(\a)}}\right)\nn\\
&\leq& 
2\rho_\a(A)\exp\left(\varepsilon \left( \ln \left( \frac{1}{2\r(A)}  \right) \right)   
\right) \exp \left(  C(R, \varepsilon) |t|^{\frac{\zeta(\a)}{\zeta(\a) -1}}  \right)\nn\\
&\lesssim &
\rho_\a(A)^{1 - \varepsilon}   \exp \left( C(R,\varepsilon)\left( 1 +|t|^{\frac{\zeta(\a)}{\zeta(\a) -1}} \right) \right)\,,
\eea
for all $\varepsilon >0$, where we used \eqref{eq:p-consequence} in the second line and the Young inequality in the penultimate line. Then (\ref{QuantQiPreq}) follows by (\ref{eq"usami}).
\end{proof}

\

For $\a\neq1$ we extend the flow of the Birkhoff map globally in time using the Bourgain probabilistic argument 
\cite{B94}.
 
\begin{proposition}\label{prop:alabourgain}

Let $\alpha \in( \bar{\alpha} ,1)$, $\bar{\alpha} := \frac{1+ \sqrt{97}}{12} \sim 0.9$, 
$s < \alpha - \frac12$ and $t \in [-1,1]$.
Then the Birkhoff flow map (\ref{eq:flomap}) is globally well defined 
for $\r_\a$-almost all initial data. Moreover
there exists $\gamma, c, C >0$  such that   
\begin{equation}\label{ProbabilisticAPriori}
\sup_{t \in [-1, 1]} \|  \Phi_{t} (u) \|_{H^s} \leq K,
\end{equation}
and 
\begin{equation}\label{ProbabilisticAPrioriN}
 \sup_{t \in [- 1, 1]} \| \Phi_t(u) -  \Phi_t^N(u) \|_{H^{s'}} 
\lesssim K N^{- (s - s')}, \qquad 0 \leq s' < s,
\end{equation}
hold
for all $u$ outside an exceptional set of $\r_\a$-measure $\leq C K^2 e^{-cK^\gamma}$.
\end{proposition}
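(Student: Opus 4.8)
Here is the plan.

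\medskip

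The plan is to run the Bourgain probabilistic globalisation scheme for the truncated flows $\Phi^N_t$, using as its engine the quantitative quasi-invariance of $\r_\a$ under $\Phi^N_t$ established in Proposition \ref{prop_res1} — which is uniform in $N$ and, on the window $|t|\leq1$, costs only a fixed constant $\exp(C(R,\varepsilon))$ — together with the $H^s$ tail bound of Lemma \ref{lemma:subexpHs}; the passage to the untruncated flow and the convergence rate \eqref{ProbabilisticAPrioriN} would then be supplied by the conditional results of Section \ref{section:flow-fract}, namely Proposition \ref{ApproxThm} and Lemmas \ref{lemma conditional stability}--\ref{lemma:recall}. Throughout I would take $2-2\a\leq s<\a-\tfrac12$ — the range relevant for the fractional flow theory of Section \ref{section:flow-fract}, nonempty precisely because $\a>\bar\a>5/6$ (smaller values of $s$ then follow by monotonicity of the $H^s$ norm) — and I shall use that $\r_\a$-a.e.\ datum lies in $\bigcap_{s''<\a-1/2}H^{s''}\subset H^s$ and that both $\Phi^N_t$ and $\Phi_t$ preserve the $L^2$-ball $B(R)$, since $\|\cdot\|_{L^2}^2$ Poisson-commutes with $\mathcal F_N$ and $\mathcal F$.

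\emph{Local control and the partition.} By Lemma \ref{LWPlemma} a datum of $H^s$-norm $\leq\Lambda$ produces a truncated solution with $\|\Phi^N_t(u)\|_{H^s}\leq C_0\Lambda$ for $|t|\leq T_\a(\Lambda)=\mathtt c_\a/\Lambda^2$, uniformly in $N\in\N\cup\{\infty\}$. Fixing $K>0$, I set $\delta:=\mathtt c_\a/K^2$ and partition $[-1,1]$ into $M\simeq K^2$ closed subintervals with endpoints $t_{-M}<\dots<t_M$. If $\|\Phi^N_{t_k}(u)\|_{H^s}\leq K$ for every $k$, then applying the local bound on each $[t_k,t_{k+1}]$ and gluing gives $\sup_{|t|\leq1}\|\Phi^N_t(u)\|_{H^s}\leq C_0K$. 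Hence, writing $V_K:=\{v\in B(R):\|v\|_{H^s}>K\}$ and using that $\Phi^N_{t_k}$ is invertible and acts as the identity on $\Pi_N^\bot u$, one obtains (after splitting $\|\Phi^N_{t_k}(u)\|_{H^s}^2=\|\Phi^N_{t_k}(\Pi_Nu)\|_{H^s}^2+\|\Pi_N^\bot u\|_{H^s}^2$ and absorbing the $\Pi_N^\bot$ part into a harmless enlargement of $V_K$)
\[
\Big\{u\in B(R):\ \sup_{|t|\leq1}\|\Phi^N_t(u)\|_{H^s}>C_0K\Big\}\ \subset\ \bigcup_{k=-M}^{M}\Phi^N_{-t_k}(V_K)\,.
\]

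\emph{Measuring the bad set.} By Proposition \ref{prop_res1} applied with the set $V_K$ and time $-t_k$, $|t_k|\leq1$, one has $\r_\a\big(\Phi^N_{-t_k}(V_K)\big)\lesssim \r_\a(V_K)^{1-\varepsilon}\exp(C(R,\varepsilon))$, uniformly in $N$ and $k$. On the other hand Lemma \ref{lemma:subexpHs} yields $\tilde\g_\a(\|v\|_{H^s}>t)\leq\exp(-c(R)t^{2\a/s})$, and combining this with H\"older's inequality and Proposition \ref{prop:exp-mom-L4} to pass from $\tilde\g_\a$ to $\r_\a$ (which is immediate when $\sigma=1$) gives $\r_\a(V_K)\lesssim\exp(-c'(R)K^{2\a/s})$. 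Summing over the $\simeq K^2$ indices $k$ and taking e.g.\ $\varepsilon=\tfrac12$ produces, uniformly in $N$,
\[
\r_\a\Big(\big\{u\in B(R):\ \sup_{|t|\leq1}\|\Phi^N_t(u)\|_{H^s}>C_0K\big\}\Big)\ \lesssim_R\ K^2\exp\big(-c''(R)K^{2\a/s}\big)\,.
\]
After the harmless rescaling $K\mapsto K/C_0$ and with $\gamma:=2\a/s>0$ this is the sought exceptional-set estimate for the truncated flows.

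\emph{Passing to the limit.} Since the last estimate, the partition, and the tail bounds are all independent of $N$, I would assemble from them a single exceptional set $\mathcal X^K$ with $\r_\a(\mathcal X^K)\lesssim_R K^2 e^{-cK^\gamma}$ on whose complement $\sup_{N}\sup_{|t|\leq1}\|\Phi^N_t(u)\|_{H^s}\leq K$; this is exactly the growth hypothesis appearing in \eqref{GrowthAssumption}, \eqref{GrowthAssumptionB1} and \eqref{GrowthAssumptionB}. For $u\notin\mathcal X^K$, Proposition \ref{ApproxThm} then shows that $\Phi_t(u)$ is well defined on $[-1,1]$, and Lemma \ref{lemma:recall} delivers the quantitative closeness \eqref{ProbabilisticAPrioriN} for every $0\leq s'<s$; finally, since $\Phi^N_t(u)$ is bounded by $K$ in $H^s$ and converges to $\Phi_t(u)$ in some $H^{s'}$, $s'<s$, weak lower semicontinuity of $\|\cdot\|_{H^s}$ forces $\sup_{|t|\leq1}\|\Phi_t(u)\|_{H^s}\leq K$, which is \eqref{ProbabilisticAPriori}. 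Summing $\r_\a(\mathcal X^K)$ over $K\in\N$ (convergent as $\gamma>0$) and letting $K\to\infty$ yields global well-posedness of \eqref{eq:flomap} for $\r_\a$-a.e.\ datum. The step I expect to be the main obstacle is precisely this passage to the limit: one must make the a priori control of the truncated flows genuinely uniform in $N$ — so that the conditional-stability results, whose hypotheses quantify over all $N$, become applicable — while coping with the unavoidable regularity loss $s'<s$ in Proposition \ref{ApproxThm}; the latter does not damage \eqref{ProbabilisticAPriori} exactly because the $H^s$-bound for $\Phi_t$ is recovered through weak compactness from the $N$-uniform $H^s$-bound for $\Phi^N_t$, not from the $H^{s'}$ approximation.
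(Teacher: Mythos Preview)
Your proposal follows the same Bourgain globalisation scheme as the paper: partition $[-1,1]$ into $\simeq K^2$ subintervals of length $\tau_K\simeq \mathtt c_\a/K^2$, control the bad set $\bigcup_k\Phi^N_{-t_k}(V_K)$ via the quantitative quasi-invariance of Proposition \ref{prop_res1} together with the $H^s$-tail bound of Lemma \ref{lemma:subexpHs} (passing from $\tilde\g_\a$ to $\r_\a$ by H\"older, as the paper also does), and then invoke Proposition \ref{ApproxThm} to construct the untruncated flow.

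Two points deserve correction. First, Lemma \ref{lemma:recall} is the wrong reference for \eqref{ProbabilisticAPrioriN}: that lemma yields only a set containment $\Phi(E)\subset\Phi^N(E+B_{s'}(\varepsilon))$, not a quantitative approximation rate. The paper obtains \eqref{ProbabilisticAPrioriN} by iterating the local estimate \eqref{GlobalApproxN} of Lemma \ref{stime differenze campi vettoriali sigma} along the partition, which becomes legitimate once the global bound \eqref{ProbabilisticAPriori} is in hand (so that at each restart time the datum lies in $B_s(K)$). Second, your claim of a \emph{single} exceptional set $\mathcal X^K$ on whose complement $\sup_N\sup_t\|\Phi^N_t(u)\|_{H^s}\leq K$ is stronger than what the paper establishes, and is not obviously available: the sets $\bigcup_k\Phi^N_{-t_k}(V_K)$ depend on $N$, and their union over all $N\in\N$ need not retain the measure bound $K^2e^{-cK^\gamma}$. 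The paper sidesteps this by fixing, for each $K$, one sufficiently large $N_K$ (as dictated by Proposition \ref{ApproxThm}, whose hypothesis \eqref{GrowthAssumption} requires control of a single $N$, not of $\sup_N$); the resulting exceptional set $E_{K,N_K}$ then depends on $K$ through $N_K$, which suffices for the downstream application in Lemma \ref{lemma:ConvMeas} where $K$ is ultimately coupled to $N$. Your weak-lower-semicontinuity argument for recovering the $H^s$-bound on $\Phi_t$ from the $H^s$-bound on $\Phi^{N}_t$ plus $H^{s'}$-convergence is a clean observation that the paper leaves implicit.
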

\begin{proof}
Let $K>0$. We partition $[0,1]$ into~$J$ intervals of size at most 
$$\tau_K :=\frac{\mathtt{c}_{\alpha}}{K^2+1},$$ where $\mathtt{c}_{\alpha}$ is given by 
Lemma \ref{LWPlemma}.
Clearly 
 \begin{equation}\label{J}
 J \leq \mathtt{c}_{\alpha}^{-1}  (K^2 +1) +1 . 
 \end{equation}
We take any $s>\min(2-2\a,5/4-\a)$ (note that $s<\a-1/2$ for oour choice of $\a$) and set 
\begin{align}\nonumber
 E_{K,N}
& :=  \Big\{ u \notin  B_{s}(K/2) \Big\} \cup  
\Big\{ u \notin  \Phi^N_{- \tau_K}(B_{s}(K/2)) \Big\}  \cup 
\Big\{  u \notin  \Phi^N_{- 2 \tau_K}(B_{s}(K/2)) \Big\}
\\ \label{Def:Except} 
&
\quad \quad \quad 
\ldots \cup \Big\{  u \notin  \Phi^N_{- (J-1) \tau_K }(B_{s}(K/2)) \Big\}
\cup 
\Big\{  u \notin  \Phi^N_{-J\tau_K}(B_{s}(K/2)) \Big\} \,.
\end{align}
We will show that the $\rho_\a$ measure of these sets vanishes in the limit $K \to \infty$ (and $\t_K\to0$). 

First we record for later use that since the density of $\r_\a$ is in $L^2(\tilde\g_\a)$ (the proof is done by an elementary adaption of \cite[Lemma 3.1]{B94}) we can use the Cauchy-Schwartz inequality to obtain 
$$
\rho_\a (B_{s}(K/2)^C)\lesssim \sqrt{\tilde\g_\a(\|u\|_{H^s}\geq K/2)}  \lesssim e^{-cK^\gamma}, \qquad \gamma >2\,.
$$
where we used Lemma \ref{lemma:subexpHs} for the last bound. 
 
Using \eqref{QuantQiPreq}, we have for some $\e>0$
\begin{align}\label{TTT}
\rho_\a (E_{K,N}) 
&
\leq \sum_{j=0}^{J} \rho_\a( \Phi^N_{- \tau_K}(B_{s}(K/2))^C ) \leq
\sum_{j=0}^{J} (\rho_\a (B_{s}(K/2))^C ))^{1-\e} \lesssim_{R, \varepsilon} J e^{-cK^\gamma} 
\lesssim_{R, \varepsilon} K^2 e^{-cK^\gamma} \, , 
\end{align}
where we used \eqref{J} in the last inequality.

Let now $\{N_K\}_{K\in\N}$ be a diverging sequence and 
\be\label{eq:inclusione-fondamentale}
E := \bigcap_{K \in \N}  E_{K, N_K} \,.
\ee
Using \eqref{TTT} and Proposition \ref{ApproxThm}, we will first show that the flow $ \Phi_t$ is well defined for all 
$t \in [0,1]$ 
and for all initial data in $E_T^C$. Then we will show that 
$ \rho( E_T) = 0$, concluding the proof of the statement
(negative times are covered just by time reversibility).  

Let us consider
\begin{align}\label{eq:setsA}
 E_{K,N}^C
& :=  \Big\{ u \in  B_{s}(K/2) \Big\} \cap  \Big\{ u \in  \Phi^N_{- \tau_K}(B_{s}(K/2)) \Big\}  \cap \Big\{  u \in  \Phi^N_{- 2 \tau_K}(B_{s}(K/2)) \Big\}
\\ \nonumber 
&
\quad \quad \quad 
\ldots \cap \Big\{  u \in  \Phi^N_{- (J-1) \tau_K }(B_{s}(K/2)) \Big\}
\cap 
\Big\{  u \in  \Phi^N_{-J\tau_K}(B_{s}(K/2)) \Big\} \,.
\end{align}Since 
$$
 \Phi^N_{j \tau_K} (E_{K,N}^C) \subset  B_{s}(K/2),
\qquad j=0, \ldots, J+1,
$$
by the group property of the flow we can apply Lemma \ref{LWPlemma} on each
time interval $[j \tau_K, (j+1) \tau_K]$ so that we have  
\begin{equation}\label{fndlsjdngjdskjng}
\sup_{t \in [0, 1]} \sup_{N \in \mathbb{N}} \sup_{u \in  E_{K,N}^C } \|  \Phi_t^N(u) \|_{H^s} \leq K.
\end{equation}
The bound \eqref{fndlsjdngjdskjng} can be extended to times $t \in [-1,1]$ by the reversibility of the flow.
Thus, for all $K >1$ we can pick $N_K$ sufficiently large 
and invoke Proposition~\ref{ApproxThm},
to show that $\Phi_t$ is well defined for times $t \in [-1,1]$ and data in 
$$ 
E^C = \bigcup_{K \in \N} E_{K, N_K}\,.
$$  
On the other hand,
by \eqref{TTT}
we have
$$\rho_\a(E) = \rho_\a \left( \bigcap_{K \in \N}  E_{K, N_K,T} \right) \leq \lim_{K \to \infty} \rho_\a (  E_{K, N_K,T} )  = 0,$$ 
as claimed.

So the flow $\Phi_t$ is defined for $t \in [-1,1]$ for all data outside an exceptional sets $E_{K,N}$ of measure
smaller than $C K^2 e^{-cK^\gamma}$ (recall \eqref{TTT}). Therefore
 \eqref{fndlsjdngjdskjng} implies \eqref{approxProp} by Proposition \ref{ApproxThm} and then \eqref{ProbabilisticAPriori} easily follows. Moreover \eqref{ProbabilisticAPriori} implies the global approximation bound \eqref{ProbabilisticAPrioriN}. Indeed thanks to \eqref{ProbabilisticAPriori} the 
 local approximation bound \eqref{GlobalApproxN} applies for all data outside the exceptional set
 starting by any initial time $t \in [-1,1]$. This proves the~\eqref{ProbabilisticAPrioriN} and concludes the proof.
 \end{proof}

\

\begin{proof}[Proof of Proposition \ref{prop:quasi-inv}]

Proceeding exactly as in the proof of \cite[Lemma 6.2]{BBM1} we obtain that, given any $\varepsilon > 0 $ we can find $\delta = \delta(\varepsilon, R) >0$ such that for all $A$
\be\label{eq:QIN}
\rho_{\a, N}(A) \leq \delta\quad\Rightarrow\quad \r_{\a, N} (\Phi_t^N (A)) \leq \varepsilon
\ee
Next we pass to the limit $N \to \infty$ in this inequality, showing the quasi invariance of the measure $\r_{\a, N}$.
It suffices to consider only compact sets $A\subset B_s(R)$. The argument then extends to Borel sets 
using the inner regularity of the 
Gaussian measure $\tilde\gamma$ and the continuity of the flow map, that is Lemma \eqref{InverseFlowStab} (see \cite[Lemma 8.1]{sigma}).

%

Assume $\rho_\a(A) \leq \delta/2$ with $A$ compact. We have then for all sufficiently small $\delta'$ (that will depend on $A$ and $\delta$)
\begin{equation}\label{FinQi}
\rho_\a(A + B(\delta')) \leq \delta\,.
\end{equation}
Thus by \eqref{eq:QIN}
\be\label{eq:byQIN}
\rho_{\a,N}( \Phi_{t}^N (A + B(\d')))\leq \e\,.
\ee
For $N$ large enough we have 
$$
\rho_\a( \Phi_{t} (A)) \leq  \rho_\a( \Phi_{t}^N (A + B(\d'))) \leq \rho_{\a,N}( \Phi_{t}^N (A + B(\d')))+\e\leq 2\e\,,
$$ 
where we used Lemma \ref{lemma:recall} in the first inequality, (\ref{eq"usami}) in the second one and (\ref{eq:byQIN}) in the last step. Thus absolutely continuity of $\r\circ\Phi_t$ w.r.t $\r$ is proved.
\end{proof}

\section{Density of the transported measure}\label{sect:density}

In this section we show that the density $\bar f$ of Proposition \ref{prop:quasi-inv} can be obtained as a limit of  finite dimensional approximations. 
We define the finite dimensional approximated densities as
\be\label{eq:fN}
f_{N}(s, u) 
:=\exp \left(- \int_0^s \!\!\!  \frac{d}{d\tau}\left( H^{(\a)}[ \Pi_N \Phi_\tau^N(u)]  \right) d\tau \right)\,.
\ee
Following the notation $ \Phi_\tau(u) := \Pi_\infty \Phi_\tau^\infty(u)$ used in the rest of the paper, 
we can write the limit density as
\be\label{eq:fNinfty}
f_{\infty}(s, u) 
:=\exp \left(- \int_0^s \!\!\!  \frac{d}{d\tau}\left( H^{(\a)}[ \Phi_\tau(u)]  \right) d\tau \right)\,.
\end{equation}
The main result of this section is the following.
\begin{proposition}\label{prop:density}
Let $\alpha \in( \bar{\alpha} ,1]$, $\bar{\alpha} := \frac{1+ \sqrt{97}}{12} \sim 0.9$.
The sequence $\{f_{N}\}_{N\in\N}$ defined by (\ref{eq:fN}) converges 
in $L^p(\r_\a)$ to $f_{\infty}(s, u)$ and it holds $f_{\infty}(s, u) := \bar f(s, u)$ for $\r_\a$-almost all $u$, 
where $\bar f(s, \cdot)$ is the transported density from Proposition \ref{prop:quasi-inv}.
\end{proposition}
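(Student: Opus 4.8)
The plan is to establish the $L^p(\r_\a)$ convergence $f_N \to f_\infty$ first, and then identify the limit with the transported density $\bar f$ by passing to the limit in the finite-dimensional change-of-variables formula \eqref{eq:quasi-invN}. For the convergence step, note that by the fundamental theorem of calculus and the group property of the flow,
$$
-\int_0^s \frac{d}{d\tau}\bigl(H^{(\a)}[\Pi_N\Phi_\tau^N(u)]\bigr)\,d\tau = -\int_0^s G_N(\Phi_\tau^N(u))\,d\tau\,,
$$
where $G_N$ is the quantity \eqref{eq:def-GN} analysed in Section \ref{section:prob}. The key inputs are Proposition \ref{prop:Lpbound} (uniform $L^p(\tilde\g_\a)$ bounds $\|G_N\|_{L^p}\lesssim_R p^{1/\zeta(\a)}$, which transfer to $L^p(\r_\a)$ since the density of $\r_\a$ w.r.t. $\tilde\g_\a$ is bounded) and Lemma \ref{lemma:wick} ($\|G_N - G_M\|_{L^p(\tilde\g_\a)} \lesssim p^3/M^{2\a-1}$). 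First I would show that $\{f_N\}$ is Cauchy in $L^p(\r_\a)$: using the elementary inequality $|e^{a}-e^{b}| \leq (e^{a}+e^{b})|a-b|$ together with Hölder, one reduces to controlling $\|e^{c|\int_0^s G_N\circ\Phi_\tau^N d\tau|}\|_{L^q(\r_\a)}$ (finite by Proposition \ref{prop:Lpbound}, the sub-exponential tail \eqref{eq:finaltail}, and invariance-type bounds on the flow) times $\|\int_0^s (G_N\circ\Phi_\tau^N - G_\infty\circ\Phi_\tau)\,d\tau\|_{L^r(\r_\a)}$. This last difference is split as $G_N\circ\Phi_\tau^N - G_N\circ\Phi_\tau + G_N\circ\Phi_\tau - G_\infty\circ\Phi_\tau$; the second piece is handled by Lemma \ref{lemma:wick} (after using that $\r_\a$ is quasi-invariant under $\Phi_\tau$, Proposition \ref{prop:quasi-inv}, so composition with $\Phi_\tau$ is harmless in $L^r$), and the first piece by the flow approximation estimate \eqref{ProbabilisticAPrioriN} of Proposition \ref{prop:alabourgain} combined with continuity/Lipschitz properties of $G_N$ on the relevant $H^s$ balls, restricting to the full-measure set where \eqref{ProbabilisticAPriori} holds.

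Having the $L^p$ limit $f_\infty$, I would next pass to the limit in $N$ in the identity $\r_{\a,N}(\Phi_s^N(A)) = \int_A f_N(s,u)\,\r_{\a,N}(du)$, which is exactly the content of the computation in the proof of Proposition \ref{prop_res1} (change of variables in finite dimensions using $L_N$-invariance of the Hamiltonian flow $\Phi_s^N$). It suffices to work with compact $A \subset B_s(R)$ and extend by inner regularity as in \cite[Lemma 8.1]{sigma}. On the left, $\r_{\a,N}(\Phi_s^N(A)) \to \r_\a(\Phi_s(A))$ using \eqref{eq"usami}, Lemma \ref{lemma:recall}, and the squeezing argument already used at the end of the proof of Proposition \ref{prop:quasi-inv} (upper and lower inclusions $\Phi^N(A+B_{s'}(\e)) \supset \Phi(A)$ etc.). On the right, $\int_A f_N(s,u)\,\r_{\a,N}(du) \to \int_A f_\infty(s,u)\,\r_\a(du)$ because $f_N \to f_\infty$ in $L^1(\r_\a)$ and $\r_{\a,N} \to \r_\a$ with densities converging in $L^2(\tilde\g_\a)$ (so $\int_A f_N\,d\r_{\a,N} - \int_A f_N\,d\r_\a \to 0$ by Cauchy--Schwarz and the uniform $L^2(\r_\a)$-bound on $f_N$). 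This yields $\r_\a(\Phi_s(A)) = \int_A f_\infty(s,u)\,\r_\a(du)$ for all measurable $A$, hence $f_\infty(s,\cdot) = \bar f(s,\cdot) = \tfrac{d(\r_\a\circ\Phi_s)}{d\r_\a}$ $\r_\a$-a.e. by uniqueness of the Radon--Nikodym derivative.

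I expect the main obstacle to be the careful treatment of the first piece of the difference, namely controlling $\|G_N\circ\Phi_\tau^N - G_N\circ\Phi_\tau\|_{L^r(\r_\a)}$ uniformly in $\tau\in[0,1]$. The functional $G_N$ is a multilinear expression involving $\|\Pi_N u\|_{L^4}^4$, $\|\Pi_N u\|_{L^2}^4$ and the Poisson bracket term $\{\|\Pi_N u\|_{L^4}^4,\mathcal F_N\}$ (see \eqref{eq:outdt}, \eqref{formula fourier L4 poisson FN}); its modulus of continuity in the $H^{s'}$ metric grows with the $H^{s'}$ (or $FL^{0,1}$) norms of the arguments, so one must interpolate the strong approximation \eqref{ProbabilisticAPrioriN} (which holds in $H^{s'}$, $s'<\a-\tfrac12$, with a gain $N^{-(s-s')}$) against the a priori bound \eqref{ProbabilisticAPriori} in $H^s$, and then integrate against $\r_\a$ using the sub-exponential moment bounds of Lemmas \ref{lemma:subexpHs}--\ref{lemma:subexpFL}, all while staying inside the exceptional-set framework of Proposition \ref{prop:alabourgain}. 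This is essentially a quantitative stability statement for the map $u\mapsto G_N(u)$ combined with bookkeeping of which norms appear; none of it is deep, but it requires assembling several of the earlier estimates simultaneously and tracking the (loose) powers of $N$ and $t$.
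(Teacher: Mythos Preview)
Your approach is correct but organized differently from the paper's. The paper splits the $L^p$ convergence into two separate lemmas: uniform $L^p(\r_\a)$ bounds $\sup_N\|f_N\|_{L^p}<\infty$ (Lemma \ref{prop:UI}, proved via the tail bound \eqref{eq:finaltail} combined with the quasi-invariance estimate \eqref{QuantQiPreq}, essentially as you anticipate) and convergence in $\r_\a$-measure $f_N\to f_\infty$ (Lemma \ref{lemma:ConvMeas}); the standard implication (uniform $L^p$ bound $+$ convergence in measure $\Rightarrow$ $L^{p'}$ convergence for $p'<p$, see \cite[Lemma 3.7]{BBM1}) then finishes. The convergence-in-measure step is carried out by working on the complement of the exceptional sets from Proposition \ref{prop:alabourgain}, where one has the \emph{deterministic} bounds $\|\Phi_\tau^N u\|_{H^s},\|\Phi_\tau u\|_{H^s}\le K$ and $\|\Phi_\tau^N u-\Phi_\tau u\|_{H^{s'}}\lesssim KN^{-(s-s')}$; the multilinear structure of the three pieces of \eqref{MainIdentityBis} then gives pointwise convergence there, and the exceptional set has small measure. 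Your direct Cauchy route via $|e^a-e^b|\le(e^a+e^b)|a-b|$ and H\"older also works, but it forces you to upgrade convergence in measure of the integrand difference to $L^r(\r_\a)$ convergence --- this is exactly your ``main obstacle'', and resolving it amounts to redoing the two lemmas simultaneously (the uniform $L^q$ bound is what controls the contribution of the exceptional set). The paper's modular route is cleaner precisely because convergence in measure asks for less. For the identification $f_\infty=\bar f$, the paper simply invokes the abstract argument of \cite[Proposition 7.2]{BBM1}; your explicit passage to the limit in the finite-dimensional change-of-variables identity is essentially what that cited argument carries out.
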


Combining Proposition \ref{prop:quasi-inv} and Proposition \ref{prop:density} we complete the proof of 
Theorem \ref{TH:main-alpha}. 

To prove Proposition \ref{prop:density}, first we show that this sequence has a limit in $L^p(\r_\a)$.
This is a consequence of the following lemmas. 
\begin{lemma}\label{prop:UI}
Let $\alpha \in( \bar{\alpha} ,1]$, $\bar{\alpha} := \frac{1+ \sqrt{97}}{12} \sim 0.9$.
We have for all $p \in [1, \infty)$
\be\label{eq:UI-p}
\sup_{N\in\N}\|f_{N}\|_{L^p(\r_\a)}<\infty\,. 
\ee
\end{lemma}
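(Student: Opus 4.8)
The plan is to reduce the $L^p(\rho_\a)$ bound on $f_N$ to the tail estimate already proved in Proposition \ref{prop:Lpbound}, by exploiting the group property of the truncated Birkhoff flow. First I would observe that, by the very definition \eqref{eq:fN} of $f_N$ and the chain rule, one has the exact identity
\be\label{eq:fN-as-GN}
\log f_N(s,u) = - \int_0^s \frac{d}{d\tau}\Big(H^{(\a)}[\Pi_N\Phi_\tau^N(u)]\Big)\,d\tau = - \int_0^s G_N(\Phi_\tau^N(u))\,d\tau\,,
\ee
where $G_N$ is the quantity defined in \eqref{eq:def-GN} and we used that $\frac{d}{dt}H^{(\a)}[\Pi_N\Phi_t^N(v)]\big|_{t=\tau} = \frac{d}{dt}H^{(\a)}[\Pi_N\Phi_t^N(\Phi_\tau^N(v))]\big|_{t=0} = G_N(\Phi_\tau^N(v))$ by the one-parameter group property of $\Phi_t^N$ (the same argument as in \eqref{EqualityAtT=0}).

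Next I would bound $\|f_N\|_{L^p(\rho_\a)}^p = \mathbb{E}_{\rho_\a}[e^{-p\int_0^s G_N(\Phi_\tau^N(u))\,d\tau}]$. Two routes are available; I would take the one that uses only the already-established quasi-invariance bound \eqref{QuantQiPreq}. By Jensen's inequality applied to the probability measure $\frac{d\tau}{|s|}$ on $[0,s]$ (for $s\neq 0$; the case $s=0$ is trivial since $f_N\equiv 1$),
\be\label{eq:jensen-fN}
e^{-p\int_0^s G_N(\Phi_\tau^N(u))\,d\tau} \leq \frac{1}{|s|}\int_0^s e^{-p\,|s|\,G_N(\Phi_\tau^N(u))}\,d\tau\,,
\ee
so that, integrating in $u$ and using that $\rho_\a$ has density in $L^2(\tilde\g_\a)$ together with the pushforward bound \eqref{QuantQiPreq} (which controls $\rho_\a\circ\Phi_\tau^N$ by a power of $\rho_\a$ up to a constant uniform for $\tau\in[-1,1]$),
\be\label{eq:fN-pushforward}
\|f_N\|_{L^p(\rho_\a)}^p \lesssim \sup_{\tau\in[-1,1]}\int e^{-p|s|\,G_N(u)}\,\rho_\a\circ\Phi_\tau^N(du) \lesssim_{R} \sup_{\tau\in[-1,1]}\Big(\int e^{-q p|s|\,G_N(u)}\,\rho_\a(du)\Big)^{1/q'}\,,
\ee
for suitable conjugate exponents, after one more Hölder step to absorb the $(\rho_\a(\cdot))^{1-\e}$ on the right of \eqref{QuantQiPreq}. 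It then remains to bound a fixed exponential moment $\mathbb{E}_{\rho_\a}[e^{c\,|G_N|}]$ uniformly in $N$. Since $\rho_\a(du) = e^{-\frac{\sigma}{2}\|u\|_{L^4}^4}\tilde\g_\a(du)$ and in the support of $\tilde\g_\a$ the density is bounded (defocusing case) or controlled via Proposition \ref{prop:exp-mom-L4} (focusing case), this reduces to estimating $\mathbb{E}_{\tilde\g_\a}[e^{c\,|G_N|}]$, and that is immediate from the tail bound $\tilde\g_{\a,N}(G_N\geq t)\lesssim e^{-c(R)t^{\zeta(\a)}}$ established in the proof of Proposition \ref{prop:Lpbound} (equation \eqref{eq:finaltail}), because $\zeta(\a)>1$ makes the fractional-exponential moment finite — indeed one even has $\mathbb{E}[e^{c|G_N|^{\zeta(\a)}}]<\infty$ uniformly in $N$, which is more than enough. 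Summing over the routes gives $\sup_N\|f_N\|_{L^p(\rho_\a)}<\infty$ for every $p\in[1,\infty)$.

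The main obstacle is bookkeeping rather than conceptual: one must be careful that every constant produced by the Hölder/Jensen manipulations and by \eqref{QuantQiPreq} is uniform in $N$ (and in $s\in[-1,1]$), and one must correctly track how the exponent $p$ of $f_N$ interacts with the loss $\e$ in $\rho_\a(\cdot)^{1-\e}$ — choosing $\e$ small and $q,q'$ appropriately, and then invoking that $G_N$ has a uniform-in-$N$ fractional-exponential moment of order $\zeta(\a)>1$, closes the estimate. Note that the sign of $\sigma$ plays no role here beyond the use of Proposition \ref{prop:exp-mom-L4} in the focusing case.
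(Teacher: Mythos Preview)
Your plan is close to the paper's and uses the same two ingredients, namely Proposition~\ref{prop_res1} (the quasi-invariance bound \eqref{QuantQiPreq}) and the tail estimate \eqref{eq:finaltail} with $\zeta(\a)>1$. The identity \eqref{eq:fN-as-GN} and the Jensen step \eqref{eq:jensen-fN} are correct and are a clean way to replace the paper's ``$\max_\tau$ at some $\tau^*$'' argument. The final reduction to a uniform fractional-exponential moment of $G_N$ is also correct.

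The gap is at \eqref{eq:fN-pushforward}. The set-wise inequality $\rho_\a(\Phi_\tau^N A)\le C\,\rho_\a(A)^{1-\e}$ from \eqref{QuantQiPreq} does \emph{not} yield, by ``one more H\"older step'', the integral inequality
\[
\int g\,d(\rho_\a\circ\Phi_\tau^N)\ \lesssim\ \Big(\int g^{q}\,d\rho_\a\Big)^{1/q'}.
\]
To use H\"older here one would need $\frac{d(\rho_\a\circ\Phi_\tau^N)}{d\rho_\a}\in L^{q}(\rho_\a)$ uniformly in $N,\tau$. But that density is (up to the harmless factor $d\rho_{\a,N}/d\rho_\a$) exactly $f_N(\tau,\cdot)$, so assuming it is in $L^q$ is circular. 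One could instead derive a weak-$L^{q}$ bound on the density from \eqref{QuantQiPreq} via Chebyshev on the super-level sets $\{h_\tau>\lambda\}$; however that argument, once written, already proves the lemma directly and makes the Jensen/H\"older scaffolding unnecessary.

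What the paper actually does---and what fixes your step~\eqref{eq:fN-pushforward}---is to stay at the level of super-level sets throughout: write $\|f_N\|_{L^p}^p$ by layer-cake, bound the super-level set $\{\,|\!\int_0^s G_N(\Phi_\tau^N u)\,d\tau|>\eta/p\,\}$ by $\Phi_{-\tau^*}^N\{\,|G_N|>\eta/(p|s|)\,\}$ using the group property, apply \eqref{QuantQiPreq} \emph{to that set} to get $\big(\rho_\a(|G_N|>\eta/(p|s|))\big)^{1-\e}$, and only then invoke \eqref{eq:finaltail}. Your Jensen route can be completed in the same way: after \eqref{eq:jensen-fN} and Fubini, write $\int e^{-p|s|G_N}\,d(\rho_\a\circ\Phi_{-\tau}^N)$ by layer-cake, apply \eqref{QuantQiPreq} to the super-level sets $\{e^{-p|s|G_N}>t\}$, and then use \eqref{eq:finaltail}. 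That replaces the unjustified ``H\"older step'' and closes the argument.
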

\begin{proof}
Let $p \geq 1$.
We write
$$
\|  f_{N} \|_{L^p(\rho_\a)}^p = \int_{0}^{\infty}d t e^{\eta}\rho_\a( f_{N}(s,u)\geq t^{1/p})
$$
and changing variables $t= e^{\eta}$ we can bound
\bea\label{Plug12}
\|  f_{N} \|_{L^p(\rho_\a)}^p &\leq &e + \int_{1}^{\infty}d \eta e^{\eta}\rho_\a( f_{N}(s,u)\geq e^{\frac \eta p})\nn\\
&=&e + \int_{1}^{\infty}d\eta e^{\eta}\rho_\a \left(\int_0^s \!\!\! \frac{d}{d\tau}\left( H^{(\a)}[ \Pi_N \Phi_\tau^N(u)]  \right) d\tau \geq \frac \eta p\right)\,. \label{eq:bathtub}
\eea
Now we note
$$
\left| \int_0^s \!\!\! \frac{d}{d\tau} H^{(\a)}[ \Pi_N \Phi_\tau^N(u)] d\tau \right|
 \leq s \max_{\tau\in[0,s]} \left| \frac{d}{d\tau} H^{(\a)}[ \Pi_N \Phi_\tau^N(u)] \right| =: \, s 
 \left| \frac{d}{d\tau} H^{(\a)}[ \Pi_N \Phi_\tau^N(u)]  \Big|_{\tau=\tau^*} \right|\,, 
$$ 
for some $\tau^* \in [0, s]$.
Therefore 
\be\label{Plug11}
\rho_\a \left(\int_0^s \!\!\!  \frac{d}{d\tau}\left( H^{(\a)}[ \Pi_N \Phi_\tau^N(u)]  \right) d\tau 
\geq \frac{\eta}{p} \right)
\leq \rho_\a \left( \left| \frac{d}{d\tau} H^{(\a)}[ \Pi_N \Phi_\tau^N(u)]  \Big|_{\tau=\tau^*}  \right| \geq \frac{\eta}{ps} \right)\,.\nn
\ee
Let 
$$
A = \left\{ u : \left| \frac{d}{d\varepsilon} H^{(\a)}[\Pi_N \Phi^N_{\varepsilon} (u) ] \Big|_{\varepsilon = 0} \right| > \frac{\eta}{ps} \right\}\,.
$$
Note that if $u \in A$ then $v = \Phi^N_{-\tau*} (u)$ satisfies
\begin{align*}
\frac{d}{d\tau} 
&
H^{(\a)}[ \Pi_N \Phi_\tau^N (v)]  \Big|_{\tau=\tau^*} 
=
\lim_{\varepsilon \to 0}
\varepsilon^{-1} 
\left( H^{(\a)}[\Pi_N \Phi^N_{\varepsilon} \Phi^N_{\tau^*} (v)] - H^{(\a)}[\Pi_N \Phi^N_{\tau^*} (v)] \right) 
\\
&
=
\lim_{\varepsilon \to 0}
\varepsilon^{-1} 
\left( H^{(\a)}[\Pi_N \Phi^N_{\varepsilon} (u)] - H^{(\a)}[\Pi_N u] \right) 
=
\frac{d}{d\varepsilon} H^{(\a)}[\Pi_N \Phi_{\varepsilon}^N (u) ] \Big|_{\varepsilon = 0} \,,
\end{align*}
hence 
$$\Phi_{-\tau*} (A) = 
\left\{ v : \left| \frac{d}{d\tau} H^{(\a)}[ \Pi_N \Phi_\tau^N (v)]  \Big|_{\tau=\tau^*}   \right| \geq \frac{\eta}{ps} \right\}
$$ 
Thus, using 
Proposition
\ref{prop_res1} we can continue the estimate \eqref{Plug11} as follows
$$
\rho_\a \left( \left| \frac{d}{d\tau} H^{(\a)}[ \Pi_N \Phi_\tau^N(u)]  \Big|_{\tau=\tau^*}  \right| \geq \frac{\eta}{ps} \right) 
\lesssim_{\alpha, \varepsilon, R} 
\left( 
\rho_\a \left( \left| \frac{d}{d\varepsilon} H^{(\a)}[ \Pi_N \Phi_\varepsilon^N(u)]  \Big|_{\varepsilon=0}  \right| \geq \frac{\eta}{ps} 
\right) \right)^{1 - \varepsilon}, 
$$
for all $\varepsilon >0$. Using \eqref{eq:finaltail} we have 
$$
\left( 
\rho_\a \left( \left| \frac{d}{d\varepsilon} H^{(\a)}[ \Pi_N \Phi_\varepsilon^N(u)]  \Big|_{\varepsilon=0}  \right| \geq \frac{\eta}{ps} 
\right) \right)^{1 - \varepsilon} \lesssim_{s, \varepsilon} e^{-C(R) \left( \frac{\eta}{ps} \right)^{\zeta(\a)}}.
$$
%

Plugging this into \eqref{eq:bathtub} we have
$$
\|  f_{N} \|_{L^p(\rho_\a)}^p \leq
  e + \int_{1}^{\infty}d\eta e^{\eta -C(R) \left( \frac{\eta}{ps} \right)^{\zeta(\a)} } \lesssim C(R,p,\a, s),
$$
since we have $\zeta(\a) >1$ (recall (\ref{eq:zeta})).
So (\ref{eq:UI-p}) follows.
\end{proof}

We will also need to show that the sequence $f_N(t, \cdot)$ converges in measure, for all $t \in [-1,1]$ and, 
in particular, the limit is 
$f_\infty(t, \cdot)$.


%
\begin{lemma}\label{lemma:ConvMeas}
Let $\alpha \in( \bar{\alpha} ,1)$, $\bar{\alpha} := \frac{1+ \sqrt{97}}{12} \sim 0.9$. 
For all $t \in [-1,1]$ we have that 
$f_{N}(t, \cdot) \to f_{\infty}(t, \cdot)$ in $\rho_\a$-measure as $N \to \infty$. 
\end{lemma}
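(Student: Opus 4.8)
The goal is convergence in $\rho_\a$-measure of $f_N(t,\cdot)$ to $f_\infty(t,\cdot)$ on $[-1,1]$. Since $x \mapsto e^{-x}$ is (locally Lipschitz and) continuous, and since the exponents
$$
-\int_0^t \frac{d}{d\tau}\big(H^{(\a)}[\Pi_N \Phi_\tau^N(u)]\big)\,d\tau \quad\text{and}\quad -\int_0^t \frac{d}{d\tau}\big(H^{(\a)}[\Phi_\tau(u)]\big)\,d\tau
$$
are the quantities inside the exponentials in \eqref{eq:fN} and \eqref{eq:fNinfty}, it suffices to show these exponents converge in $\rho_\a$-measure; then the continuous mapping theorem for convergence in measure gives the claim. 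So the plan is: \emph{(Step 1)} reduce to convergence in measure of the exponents; \emph{(Step 2)} localize to a large-but-fixed compact set of good initial data, using Proposition \ref{prop:alabourgain} to control $\|\Phi_t^N(u)\|_{H^s}$ and $\|\Phi_t(u)-\Phi_t^N(u)\|_{H^{s'}}$ uniformly on that set outside an exceptional set of small $\rho_\a$-measure; \emph{(Step 3)} on the good set, compare $\frac{d}{d\tau}H^{(\a)}[\Pi_N\Phi_\tau^N(u)]$ and $\frac{d}{d\tau}H^{(\a)}[\Phi_\tau(u)]$ using the deterministic estimate of Lemma \ref{lemma:dtE1} (which bounds this derivative by $FL^{0,1}$ and $H^{2-2\a}$ norms of the argument) together with the quantitative approximation \eqref{ProbabilisticAPrioriN}.

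For Step 2, fix $\e > 0$. By Proposition \ref{prop:alabourgain} there is $K = K(\e)$ and an exceptional set $\Sigma_K$ with $\rho_\a(\Sigma_K) \leq C K^2 e^{-cK^\gamma} < \e$ such that for $u \notin \Sigma_K$ one has $\sup_{t\in[-1,1]}\|\Phi_t(u)\|_{H^s} \leq K$ and $\sup_{t\in[-1,1]}\|\Phi_t(u) - \Phi_t^N(u)\|_{H^{s'}} \lesssim K N^{-(s-s')}$, for any $0 \leq s' < s$ with $s < \a - \tfrac12$; here we choose $s > \max(2-2\a, 5/4-\a)$ and $s' \in (\max(2-2\a,0), s)$, which is possible since $\a > \bar\a$. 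Since also $\|\Pi_N^\perp \Phi_\tau(u)\|_{H^{s'}} \lesssim N^{-(s-s')}\|\Phi_\tau(u)\|_{H^s} \leq K N^{-(s-s')}$, we get $\sup_{\tau\in[-1,1]}\|\Phi_\tau(u) - \Pi_N\Phi_\tau^N(u)\|_{H^{s'}} \lesssim K N^{-(s-s')}$ for $u \notin \Sigma_K$. Moreover, by the triangle inequality, on this set $\|\Pi_N\Phi_\tau^N(u)\|_{H^{s'}}$ and $\|\Phi_\tau(u)\|_{H^{s'}}$ are both $\lesssim K$ (for $N$ large), and the same bounds hold in $FL^{0,1}$ up to losing a factor depending on $K$ and $\e$, since $s' > 2-2\a \geq 0$ and, for the Fourier–Lebesgue control, we can use the tail estimates of Lemma \ref{lemma:subexpFL} — or, more simply, restrict further to $u$ in a compact subset of $\{\|u\|_{FL^{0,1}} \leq M\}$ of $\rho_\a$-measure $> 1 - \e$ by Lemma \ref{lemma:subexpFL}.

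For Step 3, write the difference of exponents as $\int_0^t \big[\tfrac{d}{d\tau}H^{(\a)}[\Phi_\tau(u)] - \tfrac{d}{d\tau}H^{(\a)}[\Pi_N\Phi_\tau^N(u)]\big]d\tau$ and integrate the multilinear identity \eqref{eq:outdt} (equivalently \eqref{MainIdentity}): the difference is a sum of multilinear forms of the type $\|\cdot\|_{L^4}^4$, $\|\cdot\|_{L^2}^4$, and $\{\|\cdot\|_{L^4}^4, {\mathcal F}_N\}$ (with the cutoff), evaluated at $\Phi_\tau(u)$ versus $\Pi_N\Phi_\tau^N(u)$. Each such form is multilinear of degree $\leq 6$ and, by the estimates used in Lemma \ref{lemma:dtE1} and Lemma \ref{lemma:wick} (the coefficient bound \eqref{stima c n1 n6} and the trilinear estimate \eqref{RickBoundE1}), is Lipschitz on bounded subsets of $H^{s'} \cap FL^{0,1}$ — the difference of two degree-$d$ multilinear forms at points $a$ and $b$ telescopes into $d$ terms each containing one factor $a-b$. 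Hence on the good set this difference is $\lesssim_{K,M} \|\Phi_\tau(u) - \Pi_N\Phi_\tau^N(u)\|_{H^{s'}} \lesssim_{K,M} K N^{-(s-s')} \to 0$ uniformly in $\tau \in [-1,1]$. Therefore for $N$ large the exponents differ by less than any prescribed $\d$ on the good set, whose complement has $\rho_\a$-measure $< 2\e$; since $\e$ was arbitrary this is exactly convergence in $\rho_\a$-measure.

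\textbf{Main obstacle.} The delicate point is Step 3: one must verify that the multilinear forms appearing in $\tfrac{d}{d\tau}H^{(\a)}$ — in particular the Poisson bracket term $\{\|\Pi_N u\|_{L^4}^4, {\mathcal F}_N\}$ with its singular divisors — are genuinely Lipschitz (in the relevant norms) on bounded sets \emph{uniformly in $N$}, so that the loss $N^{-(s-s')}$ is not destroyed. This is handled precisely by the lower bound \eqref{lower bound divisori frazionari} on the divisors together with the trilinear vector-field estimates of Lemma \ref{prop campo hamiltoniano X FN frazionario rick}, combined with the elementary $L^4 \hookrightarrow FL^{0,1}$-type bound $\|v\|_{L^4}^4 \lesssim \|v\|_{L^2}^2\|v\|_{FL^{0,1}}^2$ from \eqref{eq:rci0}; the telescoping of the degree-$6$ form into Lipschitz pieces is then routine but must be organized carefully to keep every factor controlled by the $\e$-dependent bounds established in Step 2.
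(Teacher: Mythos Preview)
Your overall strategy (reduce to convergence of the exponents, localize via Proposition~\ref{prop:alabourgain}, telescope the multilinear forms) matches the paper's. The gap is in Step~3, specifically in how you control the multilinear pieces.

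You propose to use the bounds from Lemma~\ref{lemma:dtE1}, which involve the $FL^{0,1}$ norm, and claim the forms are Lipschitz on bounded subsets of $H^{s'}\cap FL^{0,1}$. But you never establish a uniform-in-$\tau$ bound on $\|\Phi_\tau(u)\|_{FL^{0,1}}$ or $\|\Pi_N\Phi_\tau^N(u)\|_{FL^{0,1}}$: restricting to $\{\|u\|_{FL^{0,1}}\le M\}$ via Lemma~\ref{lemma:subexpFL} only controls the data at $\tau=0$, and there is no flow estimate for $FL^{0,1}$ in the paper. More seriously, when you telescope the degree-$6$ form and the difference factor $a-b$ lands in an $FL^{0,1}$ slot, you would need $\|\Phi_\tau(u)-\Pi_N\Phi_\tau^N(u)\|_{FL^{0,1}}$ to be small. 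You only have smallness in $H^{s'}$ with $s'<\a-\tfrac12<\tfrac12$, and $H^{s'}$ does \emph{not} embed into $FL^{0,1}$ for $s'\le\tfrac12$. So the final inequality ``this difference is $\lesssim_{K,M}\|\Phi_\tau(u)-\Pi_N\Phi_\tau^N(u)\|_{H^{s'}}$'' is not justified.

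The paper avoids this by \emph{not} re-using the $FL^{0,1}$-based estimate of Lemma~\ref{lemma:dtE1}. Instead it re-estimates each multilinear piece directly in norms that \emph{do} embed into $H^s$ for some $s<\a-\tfrac12$: for the $\|\cdot\|_{L^4}^4$ term it uses the Sobolev embedding $H^{1/4}\hookrightarrow L^4$ to get $|L(g_N,\dots)-L(g,g_N,\dots)|\lesssim\|g_N-g\|_{H^{1/4}}\|g_N\|_{H^s}^3$; for the degree-$6$ Poisson-bracket term it uses the convolution inequality $\|a_1*\cdots*a_6\|_{\ell^\infty}\le\prod_j\|a_j\|_{\ell^{6/5}}$ together with $H^{s}\hookrightarrow FL^{0,6/5}$ for $s>\tfrac13$. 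Since $\a>\bar\a$ gives $\a-\tfrac12>\tfrac13>\tfrac14$, both embeddings are available, and then Proposition~\ref{prop:alabourgain} (applied with $K\sim N^{\kappa}$ for a suitable small $\kappa$) closes the argument. To fix your proof you should replace the $FL^{0,1}$ bookkeeping by these $H^{1/4}$ and $H^{1/3+}$ estimates.
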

\begin{proof}
By the continuity of the exponential function it is sufficient to show 
\begin{equation}\label{MeasConv1}
\int_0^s  \frac{d}{d\tau}\left( H^{(\a)}[ \Pi_N \Phi_\tau^N(u)]  \right) d\tau  \to 
\int_0^s  \frac{d}{d\tau}\left( H^{(\a)}[  \Phi_\tau (u)]  \right) d\tau  
\end{equation}
in $\rho_\a$-measure as $N \to \infty$. 

Since 
$$
\begin{aligned}
& \left|  \int_0^s \frac{d}{d\tau}\left( H^{(\a)}[ \Pi_N \Phi_\tau^N(u)]  \right) d\tau 
- \int_0^s \frac{d}{d\tau}\left( H^{(\a)}[ \Phi_\tau (u)]  \right)  d \tau \right| \\ 
&  \leq |s|  \sup_{\tau \in [0,s]} 
\left|\frac{d}{d\tau}\left( H^{(\a)}[ \Pi_N \Phi_\tau^N(u)]  \right)  - \frac{d}{d\tau}\left( H^{(\a)}[  \Phi_\tau (u)]  \right) \right| 
\end{aligned}
$$
we can deduce \eqref{MeasConv1} from
\begin{equation}\label{MeasConv2}
\sup_{\tau \in [-1,1]} \left|\frac{d}{d\tau}\left( H^{(\a)}[ \Pi_N \Phi_\tau^N(u)]  \right)  - \frac{d}{d\tau}\left( H^{(\a)}[  \Phi_\tau (u)]  \right) \right| 
  \to 0  
\end{equation}
in $\rho_\a$-measure as $N \to \infty$. 

We now compute $\frac{d}{d\tau} H^{(\a)}[ \Pi_N \Phi_\tau^N(u)]$. One has  
\be\label{MainIdentityBis}
\begin{aligned}
\frac{d}{d\tau} H^{(\a)}[\Pi_N \Phi^N_\tau (u)]  & = \{ H^{(\alpha)}\,,\, {\mathcal F}_N \} (\Phi^N_\tau (u)) \\
& 
\stackrel{\eqref{MainIdentity}}{=}- \frac{\sigma}2 \|\Pi_N \Phi_\tau^N (u)\|^4_{L^4}+ \sigma \|\Pi_N \Phi_\tau^N(u)\|^4_{L^2}
+ \frac{\sigma}2\{\|\Pi_N \Phi_\tau^N(u)\|^4_{L^4},{\mathcal F}_N \circ \Phi_\tau^N \}\,.
\end{aligned}
\ee

We will consider the three contributions to \eqref{MainIdentityBis} separately. For the first one we must estimate
\begin{align}\label{eq"aligna}
\left|\|\Pi_N \Phi_\tau^N (u)\|^4_{L^4} -
\|\Pi_N \Phi_\tau (u)\|^4_{L^4}\right| = \left|L ( g_N, \ldots, g_N ) - L ( g, \ldots, g ) \right| ,
\end{align}
where we have defined $$L(h_1, h_2, h_3, h_4, ) = \int h_1 h_2 \overline h_3 \overline h_4$$ and 
$$g_N (\tau, u)= \Pi_N \Phi_\tau^N (u), \qquad g =  \Phi_\tau (u).$$
When it does not create confusion will abbreviate $g_N (\tau, u)$ to
$g_N$ in order to simplify the notations.  
 We decompose telescopically
\begin{align*}
L ( g_N, \ldots, g_N ) - L ( g, \ldots, g ) & = 
L ( g_N, \ldots, g_N ) - L ( g, g_N, \ldots, g_N ) 
\\ & +  L ( g, g_N, \ldots, g_N ) -
L ( g, g, g_N  \ldots, g ) 
\\& + \qquad\ldots 
\\&
+ L ( g,g,g, g_N ) - L ( g, \ldots, g )\,.
\end{align*}
We only show how to handle the first one, as the other ones require a similar procedure.
We have 
\begin{align}\label{fdklskdngfksldkng}
| L ( g_N, \ldots, g_N ) & - L ( g, g_N, \ldots, g_N ) |  =
\left| \int (g_N - g) g_N \overline g_N \overline g_N \right|
\\ \nonumber 
&\leq 
\| g_N - g \|_{L^4}  \| g_N \|^3_{L^4} \lesssim
\| g_N - g \|_{H^{1/4}}  \| g_N \|^3_{H^{1/4}} 
\leq \| g_N - g \|_{H^{\frac14}}  \| g_N \|^3_{H^{s}} ,
\end{align}
where we used the Sobolev embedding. 
Here we restrict to $\frac14 \leq s < \alpha - \frac12$, coherently with Proposition \ref{prop:alabourgain}.
Note that for all $\alpha \in [\bar{\alpha},  1]$ we have $\frac14  < \alpha - \frac12$, so the set of possible $s$ is non empty.
Taking $K = N^{\frac{4s-1}{32}}$ in \eqref{ProbabilisticAPriori} and \eqref{ProbabilisticAPrioriN} gives
\begin{equation}
\sup_{t \in [-1, 1]} ( \| g(t, u) \|_{H^s} + \| g(t, u) \|_{H^s}) \leq 2 N^{\frac{4s-1}{32}},
\end{equation}
and
\begin{equation}
 \sup_{t \in [- 1, 1]} \| g(t,u) - g_N(t,u) \|_{H^{\frac14}} 
\lesssim N^{- \frac{7}{32}(4s -1)}, \quad s> \frac14 \,, 
\end{equation}
for $u$ outside an exceptional set of $\r_\a$-measure smaller than $C N^{\frac{4s-1}{16}} e^{-cN^{\gamma \frac{4s-1}{16}}}$.
The two displays above combined with the \eqref{fdklskdngfksldkng} imply that 
\be
\eqref{eq"aligna}\leq \frac{1}{N^{\frac{2s-1}{8}}}
\ee
with probability at least $1-C N^{\frac{4s-1}{16}} e^{-cN^{ \gamma \frac{4s-1}{16}}}$, that is it converges in measure to zero as $N \to \infty$.

The analysis of the second contribution is similar (actually easier since we simply need to control the $L^2$ norm of the evolution rather than the $L^4$). 

For the last contribution we must control $L ( g_N, \ldots, g_N ) - L ( g, \ldots, g )$ where now we redefine
$$
\begin{aligned}
& L(h_1, h_2, h_3, h_4, h_5, h_6 ) := 
\{\|\Pi_Nu\|^4_{L^4},{\mathcal F}_N\} \\
& \stackrel{\eqref{formula fourier L4 poisson FN}}{=} \sum_{\substack{|n_i|,|m_i|\leq N,\,\\\sum_{i=1}^{3}n_i = \sum_{i=4}^{6}n_i} } \mathtt c(n_1, n_2, n_3, n_4, n_5, n_6) h(n_1)h(n_2)h(n_3)\overline h(n_4)\overline h(n_5)\overline h(n_6)
\end{aligned}
$$
with $| \mathtt c(n_1, n_2, n_3, n_4, n_5, n_6)| \lesssim 1$ (recall \eqref{stima c n1 n6}). We do the same decomposition as before (but of course in this case we have six differences to handle rather than four) 
and we explain how to estimate the first contribution 
\bea
&&\left| L ( g_N, \ldots, g_N ) - L ( g, g_N, \ldots, g_N ) \right| \nn\\
& \lesssim& \sum_{\substack{|n_i|\leq N,\,\\\sum_{i=1}^{3}n_i = \sum_{i=4}^{6}n_i} }
 |g_N(n_1) - g(n_1)| |g_N(n_2)| |g_N(n_3)| | g_N(n_4) | | g_N(n_5)| | g_N(n_6)|  \nn\\ 
&\lesssim& \sum_{\substack{|n_i| \leq N,\,\\\sum_{i=1}^{6}n_i = 0 } }\!\!\!\!\!\! |(g_N(n_1) - g(n_1)| |g_N(n_2)| |g_N(n_3)| 
 | g_N(-n_4) | | g_N(-n_5) | | g_N(-n_6) |  \,.\label{eq:convstru}
\eea
After spotting the convolution structure of \eqref{eq:convstru} and 
recalling the inequality
$$
\| (a_1 * a_2 * a_3 * a_4 * a_5 * a_6)_n \|_{\ell_n^{\infty}} \leq \prod_{j^1}^6 \| (a_j)_n \|_{\ell_n^{\frac65}}
$$
we can further estimate 
$$
\eqref{eq:convstru}\leq \| g_N - g  \|_{FL^{0, \frac65}} \| g \|_{FL^{0, \frac65}}^5 
$$
Then using the inequality (for sequencies) $\| a_n \|_{\ell_n^{\frac65}} \leq \left( \sum_{n \in \Z}\langle n \rangle^{2s} |a_n|^2 \right)^{\frac12} $
valid for $s > 1/3$ we further estimate
\be\label{eq:lastdisplay}
\eqref{eq:convstru} \leq \| g_N(\tau,u) - g(\tau,u)  \|_{H^{1/3 +}} \| g \|_{H^{1/3 +}}^5 
\ee
where here we restrict to $\frac13 < s < \alpha - \frac12$, coherently with Proposition \ref{prop:alabourgain}.
Note that for all $\alpha \in [\bar{\alpha},  1]$ we have $\frac13  < \alpha - \frac12$, so the set of possible $s$ is non empty.
From here, we can proceed exactly as before (from \eqref{fdklskdngfksldkng} onward) to show 
that also this last summand of (\ref{MainIdentityBis}) converges in measure to zero as 
$N \to \infty$.
This implies the convergence in measure of $f_N(t, \cdot)$ to $f_\infty(t, \cdot)$, for all $t \in [-1,1]$
so the proof is concluded. 
\end{proof}

\begin{proof}[Proof of Proposition \ref{prop:density}]
By Lemma \ref{prop:UI} and Lemma \ref{lemma:ConvMeas}  we obtain that, for all $p \geq 1$, the sequence
$f_N$ 
converges in $L^{p}(\rho_\a)$. 
More precisely the uniform $L^p(\rho_\a)$ bounds at a fixed $p$ and the convergence in measure of the sequence 
guarantees the convergence in 
$L^{p'}(\rho_\a)$, for all $p' < p$ (see for instance \cite[Lemma 3.7]{BBM1}) to a certain $L^{p'}(\rho_\a)$ function. Moreover, 
this limit  
must coincided $\rho_\a$-a.s. with $f_{\infty}$ by Lemma by \ref{lemma:ConvMeas}.
\end{proof}

\begin{proof}[Proof of Theorem \ref{TH:main-alpha}]
Once we have identified the $L^{p}(\rho_\a)$ limit $f_{\infty}$, in order to complete the proof of Theorem \ref{TH:main-alpha} we need to show 
that $f_{\infty} = \bar{f}$ $\rho_\a$-a.s., where we recall that $\bar{f}$ is the density of the  
 transport of the measure $\rho_\a$ under the flow. 
The almost sure identity $f=\bar f$ follows by an abstract argument which can be adapted 
line by line from \cite[Proposition 7.2]{BBM1}. 
\end{proof}


\end{document}